\documentclass[12pt]{article}

\usepackage[a4paper]{geometry}
\usepackage{amssymb}
\usepackage{amsmath}
\usepackage{amsthm}

\usepackage[dvipdfmx]{graphicx}
\theoremstyle{definition}
\newtheorem{theorem}{Theorem}[section]
\newtheorem{proposition}[theorem]{Proposition}
\newtheorem{lemma}[theorem]{Lemma}

\newtheorem{definition}[theorem]{Definition}

\newtheorem{assumption}[theorem]{Assumption}
\newtheorem*{definition*}{Definition}
\newtheorem*{example*}{Example}
\newtheorem*{problem*}{Problem}
\newtheorem*{problems*}{Problems}

\theoremstyle{remark}
\newtheorem{remark}[theorem]{Remark}
\newtheorem*{remark*}{Remark}
\newtheorem{remarks}[theorem]{Remarks}
\newtheorem*{remarks*}{Remarks}

\numberwithin{equation}{section}

\title{Exponential growth and decay estimates for eigenfunctions with complex eigenvalues}

\author{Kenichi {\scshape Ito}\footnote{Department of Mathematics, Graduate School of Science, Kobe University,
1-1, Rokkodai, Nada-ku, Kobe 657-8501, Japan.
E-mail: \texttt{ito-ken@math.kobe-u.ac.jp}. 
}
}

\date{}

\begin{document}

\allowdisplaybreaks
\maketitle

\begin{abstract}
We prove a priori exponential growth and decay estimates for locally square integrable eigenfunctions 
with complex eigenvalues of the non-self-adjoint Schr\"odinger operator. 
There appear two natural critical exponents, 
given by the imaginary parts of square roots of twice an eigenvalue.
Any associated eigenfunction has the same exponential growth or 
decay rate as one of these, 
or greater than the upper one. 
The results may be seen as extensions
of the Phragm\'en--Lindel\"of theorem, the Agmon estimates and Rellich's theorem to complex eigenvalues. 
For the proofs we adapt a commutator scheme of Ito--Skibsted~(2020) to 
the non-self-adjoint framework, 
presenting a unified approach to all of them. 
In order to control contributions from the imaginary parts of a potential and an eigenvalue, 
an additional commutator estimate is employed. 
\end{abstract}

\medskip
\noindent
Keywords: Non-self-adjoint Schr\"odinger operator, Eigenfunctions, Complex eigenvalues, 
Agmon estimates, Rellich's theorem, Commutator method

\medskip
\noindent
Mathematics Subject Classification 2020: 81Q12, 35J10, 35P99, 35B40

\tableofcontents

\section{Settings and results}\label{sec:intr}

\subsection{Introduction}

In this paper, we discuss growth rate of locally $L^2$ eigenfunctions 
associated with complex eigenvalues of the non-self-adjoint Schr\"odigner operator 
\[H=-\tfrac12\Delta+V \]
on $\mathbb R^d$ with $d\in\mathbb N=\{1,2,\ldots\}$. 
Here $\Delta$ is the ordinary Laplacian, and we shall often write it as 
\begin{equation*}
-\Delta=p\cdot p=p_jp_j
;\quad 
p_j=-\mathrm i\partial_j\ \ \text{for }j=1,\dots,d ,
\end{equation*}
adopting Einstein's summation convention. 
The potential $V$ is complex-valued.
While the real part of $V$ is of long-range type, 
the imaginary part is of long- or middle-range type depending on purposes. 
See Assumption~\ref{cond:20022317} below for the precise assumptions. 

The goal of the paper is to determine exponential growth or decay rate of 
an eigenfunction $\phi\in L^2_{\mathrm{loc}}(\mathbb R^d)$ solving  
\begin{equation}
(H-\kappa^2)\phi=0\ \ \text{in the distributional sense} 
\label{200125}
\end{equation}
with 
$\kappa=\lambda+\mathrm i\mu\in\mathbb C_+=\{\kappa\in\mathbb C;\ \mathop{\mathrm{Im}}\kappa\ge 0\}$. 
It is in fact a \textit{generalized} eigenfunction in the sense that it may not be in $L^2(\mathbb R^d)$,
but we shall simply call it an eigenfunction. 
Throughout the paper we consistently call $\kappa$ a spectral parameter, 
and distinguish it from the associated eigenvalue $\kappa^2$. 
Then, our main results say that $\phi$ can only grow like 
\[
\log |\phi|=-\sqrt2\mu |x|+\mathcal O(|x|^{1-\rho})
,
\quad  \text{or}\quad  
\log |\phi|\ge \sqrt2\mu |x|+\mathcal O(|x|^{1-\rho})
.
\]
The absence of eigenfunctions between the upper and the lower exponential functions 
has been known for negative eigenvalues as the Phragm\'en--Lindel\"of theorem 
or the Agmon estimates~\cite{MR168901,MR745286}. 
In the following let us simply call it the Agmon estimates. 
On the other hand, that below the lower exponential function
has been known for positive eigenvalues 
as Rellich's theorem~\cite{MR17816,MR466902}. 
This paper extends both of them to all the complex eigenvalues with sharp subexponential error terms, 
and moreover provides a unified approach, 
based on a relatively recent commutator method of Ito--Skibsted~\cite{MR4062329}.

In the proofs we shall particularly focus on the following three operators, or quadratic forms 
\begin{align}
\mathop{\mathrm{Re}}\bigl(\Theta(H-\kappa^2)\bigr)
&=\tfrac12\bigl(\Theta(H-\kappa^2)+(H-\kappa^2)^*\Theta\bigr)
,
\label{260515}
\\
\mathop{\mathrm{Im}}\bigl(\Theta(H-\kappa^2)\bigr)
&=\tfrac1{2\mathrm i}\bigl(\Theta(H-\kappa^2)-(H-\kappa^2)^*\Theta\bigr)
,\label{260516}
\\
\mathop{\mathrm{Im}}\bigl(A\Theta(H-\kappa^2)\bigr)
&=\tfrac1{2\mathrm i}\bigl(A\Theta(H-\kappa^2)-(H-\kappa^2)^*\Theta A\bigr)
.
\label{260517}
\end{align}
Here $A$ is a certain self-adjoint operator, called \textit{conjugate operator}, 
and $\Theta$ is a multiplication operator by a real-valued weight function. 
If $V$ and $\kappa^2$ are real, 
they are indeed (anti-)commutators (with weight $\Theta$ inside), 
and hence \eqref{260515}--\eqref{260517} are their natural generalizations. 
It is effective to keep the form $H-\kappa^2$, rather than just $H$, 
since it annihilates eigenfunctions, 
so that these forms are zero on them. 
However, if we implement the commutators on the right-hand sides, 
two of these quantities, \eqref{260515} 
and \eqref{260517}, turn out to be 
\textit{weightedly} positive or negative definite due to characteristics of the system,  
with admissible errors. 
Therefore, we can bound the leading term by the error terms, and a desired estimate follows. 
This is in fact how a commutator technique in general works, 
and we follow such a strategy with regard to \eqref{260515} and \eqref{260517}. 
Meanwhile, the second quantity \eqref{260516} 
is an important key in bounding non-self-adjoint contributions from $V$ and $\kappa^2$, 
which appear as first order differential operators. 
Although they are generally unbounded,
we have established a computational scheme to control them with \eqref{260516}.

The commutator method of Ito--Skibsted~\cite{MR4062329} 
originates in the Mourre theory~\cite{MR603501,MR1388037}. 
In an attempt to remove errors of compact operators and energy cut-offs, 
with which there appear a countable number of exceptional points in the spectrum, 
they have reached a combination 
of a generator $A$ of radial translations and carefully designed weight functions $\Theta$. 
Recall that Mourre's conjugate operator is the generator of dilations. 
Although their commutator method requires long computations, 
it finds a wide range of applications 
in the spectral theory of the Schr\"odinger operator, being elementary and optimal. 
In fact, it was already applied to an exterior problem on a manifold 
with Euclidean and/or hyperbolic ends~\cite{MR4062329}, 
the inverted harmonic oscillator~\cite{MR3936114,MR4316933}, 
constant electric fields~\cite{MR4066044,MR4465249}, 
$N$-body potentials~\cite{MR4274369}, 
some class of long-range perturbations~\cite{MR4754869},
the harmonic oscillator~\cite{MR4926373},
and slowly decaying attractive potentials~\cite{ito2026lowenergyresolventestimates}, 
as well as to the Dirac operator~\cite{arita2026elementarycommutatormethoddirac}. 
As in these applications, we expect that 
the method of the paper would also be useful in deducing exponential resolvent estimates 
for $\kappa^2\in \mathbb C\setminus [0,\infty)$,
which we would like to discuss elsewhere.   

Finally, let us mention that, after the main part of the paper has been done, 
the author becomes aware that our method  
shares common features with the \textit{Morawetz multiplier method}. 
Compare \eqref{260515}--\eqref{260517} with Perthame--Vega~\cite[(2.1)--(2.3)]{MR1695559}, 
and see also \cite{MR2289541,MR4977402}, though their goals are different from ours. 
An advantage of our method is that 
we have carefully and maximally exploited choice of weight functions, 
effectively combining spatial cut-offs and the Yosida approximation. 
We also remark that our commutator method may sometimes be 
called the \textit{Carleman estimates}, or the \textit{virial argument}. 
However, we would prefer to keep the terminology ``commutator'' with its origin in mind.

\subsection{Settings}

\subsubsection{Modified radius}

Throughout the paper we prefer to use a \textit{modified radius function} $f$ to
avoid the exact radius $|x|$ possessing a singularity of at the origin. 
Choose $\chi\in C^\infty(\mathbb{R})$ such that 
\begin{equation}
\chi(t)
=\left\{\begin{array}{ll}
1 &\mbox{ for } t \le 1, \\
0 &\mbox{ for } t \ge 2,
\end{array}
\right.
\quad
\chi'\le 0,
\label{eq:14.1.7.23.24}
\end{equation}
and define $f\in C^\infty(\mathbb R^d)$ as 
\begin{equation*}
f(x)=
\chi(|x|)+|x|(1-\chi(|x|))\ \ \text{for }x\in\mathbb R^d.
%\label{260503}
\end{equation*}
Obviously, $f$ coincides with $|x|$ for $|x|\ge 2$, but the values for $|x|<2$ are modified so that 
$f$ is uniformly positive on the whole $\mathbb R^d$. 
We shall also use $f$ as a weight function, 
replacing a rather standard one $\langle x\rangle=(1+x^2)^{1/2}$. 

\subsubsection{Radial and spherical derivatives}

With the above modified radius $f$, we define the radial differential operator $A$ as 
\begin{equation}
A=\mathop{\mathrm{Re}}p_f=\tfrac12(p_f+p_f^*)
;\quad 
p_f=-\mathrm i\partial_f
,\ \ 
\partial_f=(\partial_jf)\partial_j
.
\label{260521}
\end{equation}
It is non-singular and globally defined on $\mathbb R^d$, 
and we will adopt it as a conjugate operator in our commutator theory 
along with various weight functions. 
Note that it coincides with a generator of the unitary radial translations
outside a compact subset of $\mathbb R^d$.  
In addition, we introduce a spherical differential operator $L$ as 
\begin{equation}
L=p_j\ell_{jk}p_k
;\quad 
\ell_{jk}=\delta_{jk}-(\partial_jf)(\partial_kf)\ \ \text{for }j,k=1,\dots,d
.
\label{25710b}
\end{equation}
Then we have a useful representation for $H$, given as   
\begin{equation}
H=\tfrac12A^2+\tfrac12L+q
;\quad 
q=V+\tfrac18(\Delta f)^2+\tfrac14(\partial_f\Delta f).
\label{260504}
\end{equation}
Note that we have the same formula even for $d=1$ just by regarding $L=0$.

\subsubsection{Complex potentials}

Now we present precise assumptions on the complex potential $V$. 
Since we will always work with the representation \eqref{260504}, 
we would like to impose assumptions on $q$. 
Restatement in terms of $V$ is straightforward as the difference is \textit{short-range}.

\begin{assumption}\label{cond:20022317}
The perturbation $q$ splits as 
\[
q=(q_1+q_2)+\mathrm i(r_1+r_2);
\quad 
q_1,r_1\in C^1(\mathbb R^d;\mathbb R),\ \ 
q_2,r_2\in L^\infty(\mathbb R^d;\mathbb R)
,
\]
and one of the following holds.  
\begin{enumerate}
\renewcommand{\labelenumi}{(\alph{enumi})}
\item\label{2606021}
There exist $\rho\in(0,1)$ and $C>0$ such that on $\mathbb R^d$
\begin{align*}
q_1= 0,\qquad 
|q_2|\le Cf^{-\rho},\qquad 
r_1= 0,\qquad  
|r_2|\le Cf^{-\rho}.
\end{align*}
\item\label{2606022}
There exist $\rho\in(0,1)$ and $C>0$ such that on $\mathbb R^d$
\begin{align*}
q_1&\le Cf^{-\rho},&
\partial_fq_1&\le Cf^{-1-\rho},& 
|q_2|&\le Cf^{-1-\rho},&
r_1&=0,&
|r_2|&\le Cf^{-(1+\rho)/2}.
\end{align*}
\item\label{2606023}
There exist $\rho\in(0,1)$ and $C>0$ such that on $\mathbb R^d$
\begin{align*}
q_1&\le Cf^{-\rho},&
\partial_fq_1&\le Cf^{-1-\rho},& 
|q_2|&\le Cf^{-1-\rho},\\
|r_1|&\le Cf^{-(1+\rho)/2},&
|\partial r_1|&\le Cf^{-1-\rho}, &
|r_2|&\le Cf^{-1-\rho}.
\end{align*}
\item\label{2606024}
There exist $\rho\in(0,1)$ and $C>0$ such that on $\mathbb R^d$
\begin{align*}
q_1&\le Cf^{-2\rho},&
\partial_fq_1&\le Cf^{-1-2\rho},& 
|q_2|&\le Cf^{-1-2\rho},\\
|r_1|&\le Cf^{-(1+3\rho)/2},&
|\partial r_1|&\le Cf^{-1-3\rho}, &
|r_2|&\le Cf^{-1-2\rho}.
\end{align*}
\end{enumerate}
\end{assumption}
\begin{remarks}\label{260704}
\begin{enumerate}
\item 
A perturbation of order $\mathcal O(f^{-\epsilon})$ or $\mathcal O(f^{-1-\epsilon})$
for some $\epsilon>0$ is said to be of long- or short-range type, respectively. 
Thus we may say that a perturbation of order $\mathcal O(f^{-1/2-\epsilon})$ is of middle-range type. 
The critical exponent $1/2$ also appears in 
connection with the \textit{Landis conjecture}~\cite{MR723095,MR1669361}. 
See the assertion 2 of Proposition~\ref{20260535017b} for a related result. 
As for the conjecture itself, we refer to 
\cite{MR1110071,fernandezbertolin2024landisconjecturesurvey,MR4929648}. 
\item 
We will work only outside of a large compact subset of $\mathbb R^d$,
and hence can add compactly supported singular perturbations.  
Then, for the second main result, 
we have to assume the \textit{unique continuation property}, i.e., 
if $(H-\kappa^2)\phi=0$ on $\mathbb R^d$ and $\phi=0$ in some open subset of $\mathbb R^d$,
then $\phi=0$ on $\mathbb R^d$. 
Note that this property holds for a wide class of perturbations~\cite{WO}, and is not that 
restrictive. 
\end{enumerate}
\end{remarks}

\subsubsection{Exponentially weighted $L^2$ spaces} 

Finally, before stating our main results, let us introduce several weighted $L^2$ spaces. 
We defined for any $s,t\in\mathbb R$
\[
\mathcal E_{s,t}=\mathrm e^{sf+tf^{1-\rho}}L^2(\mathbb R^d)
, 
\]
where $\rho$ is from Assumption~\ref{cond:20022317}. 
In addition, we define auxiliary spaces 
\begin{align*}
\mathcal E_{s+0}&=\bigcap_{r>s}\mathrm e^{rf}L^2(\mathbb R^d)
,&
\mathcal E_{s-0}&=\bigcup_{r<s}\mathrm e^{rf}L^2(\mathbb R^d)
,&
\mathcal E_{-\infty}&=\bigcap_{s\in\mathbb R}\mathrm e^{sf}L^2(\mathbb R^d)
,
\end{align*}
and 
\begin{align*}
\mathcal E_{s,t+0}&=\bigcap_{r>t}\mathrm e^{sf+rf^{1-\rho}}L^2(\mathbb R^d)
,&
\mathcal E_{s,-\infty}&=\bigcap_{t\in\mathbb R}\mathrm e^{sf+tf^{1-\rho}}L^2(\mathbb R^d)
\end{align*}
Note that here and below
we adopt the same signs for subscripts and exponents, conversely to the ordinary convention, 
since this makes it much easier to see growth rate of a function belonging to the space.

\subsection{Main results}

\subsubsection{Agmon estimates}

The first main result of the paper is 
an extension of the Agmon estimates
\cite{MR168901,MR745286} to complex eigenvalues. 
It says the absence of eigenfunctions between the upper and the lower exponential functions,
and may be seen as, simultaneously, a lower bound for the upper eigenfunctions 
and an upper bound for the lower eigenfunctions. 

\begin{theorem}\label{260603}
Suppose Assumption~\ref{cond:20022317} with (a), 
let $\kappa=\lambda+\mathrm i\mu\in \mathbb C_+\setminus\mathbb R$,
and let $t\ge 1$ be sufficiently large. 
Then, if $\phi\in  \mathcal E_{\sqrt2\mu,-t}$ solves \eqref{200125}, 
one has $\phi\in \mathcal E_{-\sqrt2\mu,t}$. 
%In addition, if there exists $\gamma\ge 0$ such that 
%\[\langle \phi,L\rangle\le \gamma\langle\phi, f^{-1}\phi\rangle,\]
%then one has $\phi\in(\mathcal B^*)_{\sqrt2\mu,-(1+\gamma)/2}$. 
\end{theorem}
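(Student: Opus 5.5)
We use the commutator scheme built on the three quadratic forms \eqref{260515}--\eqref{260517}. Each is evaluated on $\phi$ with a weight $\Theta$ that is the square of an exponential factor, suitably cut off. Because $(H-\kappa^2)\phi=0$, every such form vanishes on $\phi$ once $\Theta$ is admissible; all the information comes from expanding the commutators. Concretely, set
\[
\Theta=\chi_m^2\,\mathrm e^{2\theta},\qquad \theta=\theta_\nu(f),
\]
where $\chi_m=1-\chi(f/R_m)$ is a cut-off to the exterior region. Here $\theta_\nu$ is a Yosida-type regularization of $sf+\tau f^{1-\rho}$ with parameters $s\in[-\sqrt2\mu,\sqrt2\mu]$ and $\tau$. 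For example, take $\theta_\nu=\int_0^f\bigl(s+\tau(1-\rho)r^{-\rho}\bigr)(1+\nu r)^{-\delta}\,dr$ with a small $\delta$, so that the weight stays bounded by the a priori weight $\mathrm e^{2\sqrt2\mu f-2tf^{1-\rho}}$ allowed by $\phi\in\mathcal E_{\sqrt2\mu,-t}$. This justifies the pairings.

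\textbf{Step 1: the key inequality.} Conjugate $\psi=\mathrm e^{\theta}\phi$ and use the representation \eqref{260504}. Then
\[
\mathrm e^{\theta}(H-\kappa^2)\mathrm e^{-\theta}=\tfrac12(A+\mathrm i\theta')^2+\tfrac12L+q-\kappa^2 ,
\]
and the imaginary part of $-\kappa^2$ is $-2\lambda\mu$. Taking $\operatorname{Re}$, $\operatorname{Im}$, and $\operatorname{Im}(A\,\cdot)$ of this operator yields three identities, each equal to zero modulo cut-off terms supported on $\operatorname{supp}\chi_m'$.
\begin{itemize}
\item The real part gives $\langle\psi,(\tfrac12A^2+\tfrac12L-\tfrac12\theta'^2-\lambda^2+\mu^2+\operatorname{Re}q)\psi\rangle\approx0$.
\item The imaginary part gives $\operatorname{Re}\langle\psi,\theta'A\psi\rangle\approx 2\lambda\mu\|\psi\|^2-\langle\psi,\operatorname{Im}q\,\psi\rangle$. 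This is the tool that converts the non-self-adjoint contributions into the first order quantity $\theta'A$, as the introduction announces.
\item The $A$-commutator gives positivity of $A\theta'' A$-type terms plus $f^{-1}L$, together with $\theta'(\theta'^2-2(\lambda^2-\mu^2))$-type terms.
\end{itemize}
Combining these, we complete the square in $A$: write $A\psi=\mathrm i\theta'\psi'$-like. The resulting leading coefficient is
\[
\tfrac14\theta'^2\cdot\bigl(\theta'^2-2\mu^2\bigr)\ \text{vs.}\ (\ldots),
\]
and one expects it to be effectively proportional to $(2\mu^2-\theta'^2)$, with the sign fixed by $\lambda\ne0$ through the imaginary identity. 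The identity $\operatorname{Im}\sqrt{2\kappa^2}=\sqrt2\mu$ is what makes $\pm\sqrt2\mu$ the critical rates. Hence for $|\theta'|<\sqrt2\mu$, which is exactly $s$ strictly between the two critical exponents, the leading term is positive definite. Under Assumption (a), $q=O(f^{-\rho})$ and $\operatorname{Im}q=O(f^{-\rho})$ are absorbed, but only at the sub-exponential scale $f^{-\rho}$. This is why the $\tau f^{1-\rho}$ correction, with $\tau$ large, is needed near the endpoints $s=\pm\sqrt2\mu$. Its derivative $\tau(1-\rho)f^{-\rho}$ dominates the potential errors exactly when $t$ is large, and this produces the requirement ``$t\ge1$ sufficiently large''.

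\textbf{Step 2: bootstrap in the weight.} Step 1 gives a bound of the form
\[
\|\chi_m\mathrm e^{\theta_\nu}\phi\|^2\le C\|\mathbf 1_{\operatorname{supp}\chi_m'}\mathrm e^{\theta_\nu}\phi\|^2 ,
\]
uniform in $\nu$. Letting $\nu\to0$ by monotone convergence upgrades the membership from $\mathcal E_{\sqrt2\mu,-t}$ through $\mathcal E_{s,\cdot}$ for every $s$ with $|s|<\sqrt2\mu$. The endpoint $s=-\sqrt2\mu$ with the $+tf^{1-\rho}$ term is reached by the same argument with the $\tau$-corrected phase, so that $\theta'\ge-\sqrt2\mu+\tau(1-\rho)f^{-\rho}$ stays in the positivity region. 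Hence $\phi\in\mathcal E_{-\sqrt2\mu,t}$.

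\textbf{Main obstacle.} I expect the hardest point to be the sign bookkeeping of the imaginary contributions $2\lambda\mu$ and $\operatorname{Im}q$, as they enter $\operatorname{Im}(A\Theta(H-\kappa^2))$. These are unbounded first-order terms. To handle them, I would first try to use the $\operatorname{Im}(\Theta(H-\kappa^2))$ identity to replace $\operatorname{Re}(\theta'A)$ by zeroth-order terms. I would then choose a linear combination of the three forms, with coefficients depending on $\lambda,\mu$, whose leading symbol is a perfect square times $(2\mu^2-\theta'^2)$.
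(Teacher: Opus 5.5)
The general framework you propose matches the paper's: the three forms, Yosida-regularized exponential weights, and the $tf^{1-\rho}$ correction that beats the $O(f^{-\rho})$ perturbation. However, there is a genuine gap at the core.

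\textbf{The key inequality.} In Step 1 the inequality is asserted (``one expects'') rather than derived. The claim that some combination is positive definite for every $|\theta'|<\sqrt2\mu$, ``with the sign fixed by $\lambda\neq0$'', is also not correct. Take $\mathop{\mathrm{Im}}(A\Theta(H-\kappa^2))$ with $\Theta=\chi\mathrm e^{2\theta}$.
\begin{itemize}
\item In the representation \eqref{26052316} the angular term is $\tfrac12(f^{-1}-2\theta')\Theta L$.
\item In \eqref{26052315} the radial term is $A\theta'\Theta A$ (first derivative, not second).
\item Converting $-2\lambda\mu\mathop{\mathrm{Re}}(\Theta A)$ via \eqref{26052223} requires dividing by $\theta'$. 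It yields the scalar coefficient $-\theta'^{-1}(2\lambda^2+\theta'^2)(2\mu^2-\theta'^2)$.
\end{itemize}
The signs of all these terms are tied to the sign of $\theta'$. The form is positive for $\theta'<0$, negative for $\theta'>0$, and uncontrolled near $\theta'=0$. Since $L$ is unbounded (recall the plane waves in $\mathbb R^2$), $\theta'$ cannot change sign inside one such estimate.

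Nor can $\lambda\neq0$ be what fixes the sign. The case $\lambda=0$, i.e.\ $\kappa^2=-\mu^2$ (the classical Agmon case), is covered by the theorem. Even the weight $-\theta'\Theta$, which avoids the division and gives the scalar $(2\lambda^2+\theta'^2)(2\mu^2-\theta'^2)$, degenerates at $\theta'=0$ when $\lambda=0$.

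The missing idea is to split the range of rates into three steps:
\begin{itemize}
\item Show positivity via \eqref{26052316} with $\theta'\le -c<0$, running from $-\sqrt2\mu+t(1-\rho)f^{-\rho}$ up to $-s$.
\item Cross $\theta'=0$ with the real-part form \eqref{26052222}. Its angular term $\tfrac12\Theta L$ is always nonnegative, and after completing the square in $A$ and using \eqref{26052223} it is positive for $|\theta'|<\mu$, but not up to $\sqrt2\mu$.
\item Show \emph{negativity} via \eqref{26052316} for $\theta'\in[s,\sqrt2\mu-t(1-\rho)f^{-\rho}]$.
\end{itemize}

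\textbf{The regularization.} Your regularized weights are not admissible. Since $\theta_\nu'\to0$ at infinity, $\mathrm e^{2\theta_\nu}$ does not decay exponentially. But $\phi\in\mathcal E_{\sqrt2\mu,-t}$ only gives $\mathrm e^{-\sqrt2\mu f+tf^{1-\rho}}\phi\in L^2$. The weight must therefore be bounded by $\mathrm e^{-2\sqrt2\mu f+2tf^{1-\rho}}$, not by its reciprocal, so your pairings are unjustified.

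The regularized exponent has to agree with the currently known exponent at infinity and tend to the target exponent on compacts, as in $\widetilde\theta=-\sqrt2\mu f+(\sqrt2\mu-s)\theta+tf^{1-\rho}$. You also need an outer cut-off $\chi_n$, whose boundary term vanishes as $n\to\infty$ by the a priori bound, before letting $\nu\to\infty$. This is why the argument is a bootstrap in which each step's conclusion is the next step's a priori class.

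Finally, the endpoint you must reach is $\theta'\approx\sqrt2\mu-t(1-\rho)f^{-\rho}$, i.e.\ the weight $\mathrm e^{\sqrt2\mu f-tf^{1-\rho}}$. Your condition $\theta'\ge-\sqrt2\mu+\tau(1-\rho)f^{-\rho}$ describes the starting class, not the target.
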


\begin{remark}
Theorem~\ref{260603} is sharp in the sense that for any $t\in\mathbb R$
we can construct potentials $V_\pm$ with eigenfunctions $\phi_\pm$ obeying 
\[
\phi_+\in \mathcal E_{\sqrt2\mu,- t+0}\setminus\mathcal E_{\sqrt2\mu,- t}
,\quad 
\phi_-\in \mathcal E_{-\sqrt2\mu,t+0}\setminus\mathcal E_{-\sqrt2\mu,t}
,
\]
respectively, see Section~\ref{260613}. 
We note that, if we kept track of the constants appearing in the proof, 
we would have an explicit bound for $t$ in terms of $q$ and $\kappa$.
However, we shall not elaborate it in the paper. 
\end{remark}

Our proof is directly motivated by Ito--Skibsted~\cite{MR4062329} 
and Tagawa~\cite{MR4926373}. 
We will show that the quantities \eqref{260515} and \eqref{260517} 
tend to be positive or negative on the eigenstate $\phi$ 
for appropriate weight functions $\Theta$. 
One of the technical novelties of the paper is to bound non-self-adjoint contributions 
from $q$ and $\kappa^2$ by using the commutator \eqref{260516}. 
Although there is an extensive amount of literature on this topic, 
the author is still unaware of a work discussing complex eigenvalues 

Here let us briefly review a part of previous research on exponential upper bounds for 
the lower eigenfunctions. 
In its very early stage the main focus seems to have been 
on $N$-body potentials~\cite{MR471726,MR516993}. 
Among others, the Froese--Herbst method \cite{MR682117,MR723095} was established along this line, 
see also a review paper by Hislop~\cite{MR1785381}. 
Then, while Agmon introduced the Agmon metric~\cite{MR745286}, 
such exponential decay estimates prove effective in the analysis of 
the tunneling effect in the semiclassical regime, 
see a book by Dimassi--Sj\"ostrand~\cite{MR1735654} and references therein. 
As for recent development, we refer to works on higher order elliptic operators~\cite{MR1675240,MR3286533,MR3581301},
and on the discrete Schr\"odinger operator~\cite{MR2453259,MR4583551}.

\subsubsection{Rellich's theorem}

The second main result of the paper is an extension of 
Rellich's theorem~\cite{MR17816,MR466902} to complex eigenvalues,
saying the absence of eigenfunctions below the lower exponential function. 
It may also be seen as a lower bound for the lower eigenfunctions.

\begin{theorem}\label{2606034}
Suppose Assumption~\ref{cond:20022317} with (b), (c) or (d)
according to $\kappa=\lambda+\mathrm i\mu\in \mathbb C_+\setminus \mathbb R$, 
$\mathbb R\setminus \{0\}$ or $\{0\}$, respectively, 
and let $t\ge 1$ be sufficiently large. 
Then, if $\phi\in \mathcal E_{-\sqrt2\mu,-t}$ solves \eqref{200125}, 
one has $\phi= 0$ on $\mathbb R^d$. 
\end{theorem}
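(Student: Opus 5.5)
The proof proceeds in two stages, following the classical structure of Rellich-type theorems in the Ito--Skibsted framework. In the first stage we show that any $\phi\in\mathcal E_{-\sqrt2\mu,-t}$ solving \eqref{200125} in fact decays super-subexponentially, $\phi\in\mathcal E_{-\sqrt2\mu,-\infty}$, and then even $\phi\in\mathcal E_{-\infty}$. In the second stage we show that an eigenfunction in $\mathcal E_{-\infty}$ vanishes outside a large ball. The unique continuation property from Remarks~\ref{260704} then gives $\phi=0$ on $\mathbb R^d$.

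For the first stage we take weights of the form
\[
\Theta=\mathrm e^{2\theta},\qquad \theta=-\sqrt2\mu f+s f^{1-\rho}\ \text{ (or } \theta=sf \text{ for the second bootstrap)},
\]
regularized by a Yosida-type factor such as $(1+f/R)^{-\beta}$ and a spatial cut-off $1-\chi(f/R_0)$. The regularization makes all quadratic forms finite for $\phi$ in the a priori class.

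We then evaluate the form \eqref{260517} on $\phi$, which vanishes up to cut-off terms. Using $H=\tfrac12A^2+\tfrac12L+q$ and commuting, the principal part of \eqref{260517} should be a positive form of the type
\[
\bigl\langle (A-\mathrm i\theta')^*\,\theta''\text{-type weights}\,(A-\mathrm i\theta')\bigr\rangle+\langle L f^{-1}\Theta\rangle+\bigl(\text{term in }\theta'^2-2\mu^2\text{ and }\lambda\bigr).
\]
The threshold $\theta'=-\sqrt2\mu$ is precisely where the scalar term degenerates. Adding the sub-leading growth $sf^{1-\rho}$ with large $s$ produces a gain of order $s f^{-\rho}\Theta$, which dominates the long-range part of $q_1$ and the $q_2$ terms. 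This uses the conditions $q_1\le Cf^{-\rho}$ and $\partial_fq_1\le Cf^{-1-\rho}$ in (b) and (c), and their $2\rho$-versions in (d).

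The non-self-adjoint contributions from $\mathrm i(r_1+r_2)$ and from $\mathrm{Im}\,\kappa^2=2\lambda\mu$ appear as first-order terms $\mathrm{Re}(\,\cdot\,A)$. We control them with the form \eqref{260516}, which on $\phi$ yields an identity of the form
\[
\langle\theta'\Theta A\rangle\approx\langle(2\lambda\mu-\mathrm{Im}\,q)\Theta\rangle.
\]
This lets us trade $A$-terms against weighted $L^2$ norms. The middle-range bound $|r|\le Cf^{-(1+\rho)/2}$ is exactly what makes the resulting products of order $f^{-1-\rho}$, which the gain absorbs. Combining the form \eqref{260515}, to compare $\langle A^2\rangle$ with $\langle L\rangle$ and $|\kappa|^2$, with \eqref{260517} and \eqref{260516}, we obtain an inequality that bounds the weighted norm with $s$ replaced by $s+\delta$ by that with $s$. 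Letting $R\to\infty$ and iterating gives $\phi\in\mathcal E_{-\sqrt2\mu,-\infty}$. Repeating the argument with $\theta=-sf$, $s>\sqrt2\mu$, where now $\theta'^2-2\mu^2>0$ gives a uniform gain, yields $\phi\in\mathcal E_{-\infty}$.

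For the second stage we use $\Theta=\mathrm e^{2sf}$, or $\mathrm e^{2sf}\chi$-type weights, with $s\to\infty$. The positivity from \eqref{260517} is now of order $s^2$ or $s^3$ times the weight, which beats every error uniformly in $s$. This forces $\phi=0$ outside a fixed ball $\{f>R_0\}$.

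We expect the main obstacle to be the case analysis by $\kappa$, and in particular the real case (c) and the zero case (d). When $\mu=0$ there is no exponential margin, and the threshold gain comes only from the $f^{1-\rho}$ (or $f^{1-2\rho}$-scaled) term. The imaginary part $r_1$ is then controlled only through its derivative bound $|\partial r_1|\le Cf^{-1-\rho}$, via commutators with $A$. Keeping signs of the $\lambda\mu$ cross terms correct in \eqref{260516} is delicate, and we would first verify the scalar symbol computation, treating $A\sim\xi_r$ and $L\sim|\xi_\perp|^2$, to choose the exact $\theta$.
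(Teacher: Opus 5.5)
Your architecture matches the paper's: positivity of \eqref{260517} in the form \eqref{26052315}, \eqref{260515} to merge the $A^2$ and $L$ terms, \eqref{260516} for the first-order non-self-adjoint terms, a bootstrap, then $s\to\infty$ plus unique continuation. But two ingredients are missing, and without them the argument does not close.

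\textbf{The cross term.} The first gap is the term $-2\lambda\mu\mathop{\mathrm{Re}}(\Theta A)$. Suppose you complete the square around $A-\mathrm i\widetilde\theta'$ and convert this term with \eqref{26052223}. On eigenstates you get $-2\lambda\mu\mathop{\mathrm{Re}}(\Theta A)\approx-4\lambda^2\mu^2\widetilde\theta'^{-1}\Theta$, a negative term of order $f^0$, while every gain available in \eqref{26052315} is $\mathcal O(f^{-1})$. This also breaks your second stage, since $cs^2f^{-1}\Theta$ loses to $4\lambda^2\mu^2s^{-1}\Theta$ for $f\gg s^3$. No choice of $\widetilde\theta$ repairs this.

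The paper instead recenters the square at the real radial frequency, $B=A-\tfrac12(4\lambda\mu-r)\widetilde\theta'^{-1}-\mathrm i\widetilde\theta'$. Rewriting the third to sixth terms of \eqref{26052317l} as $B^*(\widetilde\theta'-\tfrac12f^{-1})\Theta B$ plus a remainder turns the cross term into $+2\lambda\mu\mathop{\mathrm{Re}}(\Theta A)-4\lambda^2\mu^2\widetilde\theta'^{-1}\Theta$. Then \eqref{26052223} cancels the order-one part exactly, see \eqref{26060717l}--\eqref{26052420l}. What survives is $\tfrac12f^{-1}(1+2\lambda^2\widetilde\theta'^{-2})(\widetilde\theta'^2-2\mu^2)\Theta+\tfrac12\widetilde\theta'\widetilde\theta''(1+4\lambda^2\mu^2\widetilde\theta'^{-4})\Theta$, which vanishes at the threshold. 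So the gain is $tf^{-1-\rho}\Theta$, not the $sf^{-\rho}\Theta$ you claim; the latter is the Agmon-type gain from \eqref{26052316}. This is why exactly short-range bounds on $q_2$, $\partial_fq_1$ and $r^2$ are needed.

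Your weights also have the wrong sign. Here $\phi\in\mathcal E_{-\sqrt2\mu,-t}$ means $\mathrm e^{\sqrt2\mu f+tf^{1-\rho}}\phi\in L^2$, so the exponents must be $+\sqrt2\mu f+\dots$ and $+sf$.

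\textbf{Crossing the threshold.} The second gap is how you pass $\sqrt2\mu$. Reaching $\mathcal E_{-\sqrt2\mu,-\infty}$ gives no exponential margin. A weight $\mathrm e^{2sf}$ with $s>\sqrt2\mu$ already requires $\phi\in\mathcal E_{-s'}$ for some $s'>\sqrt2\mu$, just to kill the outer boundary terms. A polynomial factor $(1+f/R)^{-\beta}$ cannot absorb an exponential excess.

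The paper jumps directly from $\mathcal E_{-\sqrt2\mu,-t}$ to $\mathcal E_{-\sqrt2\mu-0}$ with $\widetilde\theta=\sqrt2\mu f+s\theta+tf^{1-\rho}$, where $t$ is large and fixed, $s$ is small, and $\theta$ is the refined Yosida approximation (bounded for each $\nu$, with $\theta\uparrow f$). The negative $s\theta''$ contribution is absorbed using $-\theta''\le(1+\rho)f^{-1}\theta'$ together with $\sqrt2\mu\ge\rho\widetilde\theta'$ on the support.

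For $\kappa\in\mathbb R\setminus\{0\}$ the gain is $\lambda^2f^{-1}\Theta$, coming from \eqref{26052222} applied to $f^{-1}\Theta$, not from the subexponential term. There, $t$ large serves to make $\tfrac14r_1^2\widetilde\theta'^{-1}\le Ct^{-1}f^{-1}$ small.

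For $\kappa=0$ your route through $\mathcal E_{-\infty}$ does not start. With $\widetilde\theta=tf^{1-\rho}+s\theta$, the term $\tfrac12s\widetilde\theta'(f^{-1}\theta'+\theta'')$ is negative for $f\gtrsim2^\nu$ and is not dominated by the gain $\sim t^2f^{-1-2\rho}\Theta$ uniformly in $\nu$. The paper therefore stays in the subexponential scale. It uses $s\theta^{1-\rho}$ to reach $\mathcal E_{0,-\infty}$, and then $\mathrm e^{2tf^{1-\rho}}$ with $t\to\infty$ (Lemma~\ref{26060317m} with $s=0$) in place of $\mathrm e^{2sf}$ with $s\to\infty$.
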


\begin{remarks}\label{260705}
\begin{enumerate}
\item
Similarly to Theorem~\ref{260603}, 
the function class in Theorem~\ref{2606034} is sharp, see Section~\ref{260613}.
It would be also possible to bound $t$ in terms of $q$ and $\kappa$, 
but we shall not elaborate it. 
\item
If the imaginary part of $q$ is of short-range type and $\kappa\in\mathbb R\setminus\{0\}$, 
then we can replace $\mathcal E_{-\sqrt2\mu,-t}$ by the \textit{Agmon--H\"ormander space} 
$\mathcal B^*_0$ as in Ito--Skibsted~\cite{MR4062329}.
\item
The case $\kappa=0$ is quite subtle, where the result depends 
not only on the decay rate of $q$ but also on its sign. 
In fact, if $q$ is \textit{slowly decaying} and \textit{attractive}
in the sense that $-Cf^{-\sigma}\le q\le  -cf^{-\sigma}$ for some $c,C>0$ and $\sigma\in(0,2)$, 
then zero-eigenfunctions are absent in a much larger space $f^{(2+\sigma)/4}L^2(\mathbb R^d)$,
see, e.g., \cite{ito2026lowenergyresolventestimates}. 
The result of Theorem~\ref{2606034} for $\kappa=0$
is the best possible one without imposing any assumptions on sign of $q$. 
\end{enumerate}
\end{remarks}

In the proof of Theorem~\ref{2606034}, 
in contrast to Theorem~\ref{260603}, we will only make use of positivity of \eqref{260517}. 
The basic strategy is similar to Ito--Skibsted~\cite{MR4062329},
but we have to further use \eqref{260515} again to control non-self-adjoint contributions.
Rellich's theorem for complex eigenvalues seem to have rarely been considered so far, 
and Theorem~\ref{2606034} would be new. 

Let us mention some of relevant literature. 
Rellich's theorem was first established for the unperturbed Laplacian by Rellich~\cite{MR17816}. 
For the Schr\"odinger operator the absence of $L^2$ eigenfunctions 
with positive eigenvalues was obtained by \cite{MR108633,MR192186,MR252875,MR601072}, 
see also a book by Reed--Simon~\cite[Kato--Agmon--Simon theorem]{MR493421}. 
On the other hand, Uchiyama~\cite{MR372407} and Mochizuki~\cite{MR417553} 
discussed the same problem in some larger weighted $L^2$ spaces,
and Agmon--H\"ormander \cite{MR466902} in an optimally large Besov-type space,
or the Agmon--H\"ormander space $\mathcal B^*_0$. 
The last result was further generalized to the $N$-body Schr\"odinger operator 
by Isozaki~\cite{MR1272984} and Adachi--Itakura--Ito--Skibsted~\cite{MR4274369}. 
We remark that the method of Ito--Skibsted~\cite{MR4062329} was affected 
by the Froese--Herbst method~\cite{MR687957,MR682117,MR723095}, 
which was originally developed for the absence of $L^2$ eigenfunctions. 
Note that the last two works \cite{MR682117,MR723095} also discuss the Agmon estimates.

\subsubsection{Sharpness of the results, and discussion}\label{260613}

Let us present an elementary example that 
verifies the sharpness of Theorems~\ref{260603} and \ref{2606034}.
For simplicity we consider the one-dimensional case with $d=1$. 
For any $\rho\in(0,1)$, 
$\kappa=\lambda+\mathrm i\mu\in\mathbb C_+$ and $\tau=t+\mathrm ir\in\mathbb C$ 
set 
\begin{equation*}
\phi=\mathrm e^{\pm\mathrm i\sqrt2\kappa f+\tau f^{1-\rho}}
\in
\mathcal E_{\mp\mu,t+0}
\setminus\mathcal E_{\mp\mu,t}
,
%\label{260601317}
\end{equation*}
and then they in fact satisfy 
\[
(H-\kappa^2)\phi=0
\]
with 
\begin{equation}
V=V_\pm
=\pm \mathrm i\sqrt2 (1-\rho)\kappa \tau f^{-\rho}
+\tfrac12(1-\rho)^2\tau^2f^{-2\rho}
-\tfrac12\rho(1-\rho)\tau f^{-1-\rho}
,
\label{260601318}
\end{equation}
respectively. 
Now we can easily see the sharpness of Theorems~\ref{260603} and \ref{2606034}. 
Let us note that for any $\kappa\in\mathbb R\setminus\{0\}$ 
the subexponential exponent vanishes if and only 
if the imaginary part of the potential \eqref{260601318} is of short-range type,
which comforms with 2 of  Remarks~\ref{260705}. 
In addition, if $\kappa=0$, the leading term of the potential 
\eqref{260601318} is of order 
$\mathcal O(f^{-2\rho})$, and this explains the doubled exponents 
in Assumption~\ref{cond:20022317} (d).

Another natural question would be whether it is possible to verify absence of 
generalized eigenfunctions \textit{above} the upper exponential function. 
However, it would be false in general for $d\ge 2$. 
For example, in $\mathbb R^2$, set for any $\alpha,\beta,\gamma,\delta\in\mathbb R$ 
\[
\phi(x,y)=\mathrm e^{\mathrm i\sqrt2(\alpha+\mathrm i\beta)x+\mathrm i\sqrt2(\gamma+\mathrm i\delta)y}
\in
\mathcal E_{\sqrt{2(\beta^2+\delta^2)}+}
\setminus\mathcal E_{\sqrt{2(\beta^2+\delta^2)}}
.
\] 
Then it is an eigenfunction for the free Schr\"odinger operator 
$H_0=-\tfrac12\Delta$, and the associated eigenvalue $E$ is given by 
\begin{equation}
E
=\alpha^2-\beta^2+\gamma^2-\delta^2+2\mathrm i(\alpha\beta+\gamma\delta)
.
\label{26061322}
\end{equation}
Therefore, for any given $E\in\mathbb C$, we can find $\alpha,\beta,\gamma,\delta\in\mathbb R$
satisfying \eqref{26061322} with arbitrarily large $\sqrt{2(\beta^2+\delta^2)}$. 

Nonetheless, if we impose an additional assumption on
an expectation value of the angular momentum $L$ from \eqref{25710b} on the eigenstate $\phi$, 
then we would have the same exponential upper bound for $\phi$, like Tagawa~\cite{MR4926373}. 
This can be seen from the proof of Theorem~\ref{2606034}. 
We just have to invert inequalities appearing there to show \textit{negativity} of \eqref{260517}, 
except for the term concerning $L$. 

Finally, we remark that it would be easy to generalize 
our results to an exterior domain or to a manifold 
with Euclidean and/or hyperbolic funnel ends
by following the formulation of Ito--Skibsted~\cite{MR4062329}. 
Our arguments are dependent only on a few properties of an \textit{escape function} $f$, 
such as convexity,
and no other global geometry of the space $\mathbb R^d$ is required. 
It would be also interesting whether it is possible to adopt another escape function,
e.g., a solution to a complex-valued eikonal equation,  
to remove a subexpoential term from the exponent, and to allow long-range perturbations. 
Hopefully, we would be able to discuss it elsewhere.

\section{Preliminaries}

In this section we discuss two topics as preliminaries for the proofs of Theorems~\ref{260603}
and \ref{2606034}. 
One is commutator formulas involving a general weight function $\Theta$,
and the other is the \textit{refined Yosida approximation} $\theta$. 
Both of the proofs of Theorems~\ref{260603} and \ref{2606034} 
are dependent on sequential use of commutator arguments, 
and we shall step by step proceed to stronger estimates. 
In each of these steps we have to change $\Theta$ appropriately, 
and hence general formulas for an unspecified $\Theta$ would be very useful. 
Meanwhile, in most of these steps, 
we will have to deduce an additional exponential decay for an eigenfunction, 
and for that purpose the so-called Yosida approximation is well suited. 
We will introduce a refined version of it in accordance with the exponent $\rho\in (0,1)$ 
from Assumption~\ref{cond:20022317}. 

\subsection{General commutator formulas}

Here let us present several commutator formulas for later reference. 
Let us impose only the following least requirements on $\Theta$.

\begin{assumption}\label{26060220}
Let $\Theta\in C^\infty(\mathbb R^d;\mathbb R)$ be a function only of $f$, and satisfy
\begin{align*}
\mathop{\mathrm{supp}}\Theta\subset \{x\in\mathbb R^d;\ |x|\ge 2\}. 
\end{align*}
\end{assumption}
We note that under Assumption~\ref{26060220} 
we have $f=|x|$ on $\mathop{\mathrm{supp}}\Theta$, so that 
\begin{gather*}
%\begin{gathered}
|\partial f|=1
,\quad 
\ell_{ij}\ell_{jk}=\ell_{ik}
,\quad 
\partial_i\partial_jf=f^{-1}\ell_{ij}
,\\ 
(\partial_if)(\partial_i\partial_jf)=0
,\quad 
\Delta f=(d-1)f^{-1}
,\quad 
Lf=0
%\end{gathered}
%\label{eq:220319b}
\end{gather*}
on the same subset. 
These formulas are very basic, and we shall often use them without reference.
In addition, if we denote the derivatives of $\Theta$ in $f$ by primes, then 
on $\mathop{\mathrm{supp}}\Theta$ we have 
\[
\partial_f\Theta=\Theta',\quad 
\partial_f^2\Theta=\Theta'',\quad 
\dots,\quad  
\partial_f^k\Theta=\Theta^{(k)}\ \ \text{for }k\in\mathbb N_0.  
\]
Now we present the following general commutator formulas.

\begin{lemma}
Suppose Assumption~\ref{26060220}. 
Then for any $\kappa=\lambda+\mathrm i\mu\in\mathbb C_+$ one has 
\begin{align}
\begin{split}
\mathop{\mathrm{Re}}\bigl(\Theta(H-\kappa^2)\bigr)
&=
\tfrac12A\Theta A
+\tfrac12\Theta L
-(\lambda^2-\mu^2)\Theta 
-\tfrac14 \Theta''
+(q_1+q_2)\Theta 
,
\end{split}
\label{26052222}
\\
\begin{split}
\mathop{\mathrm{Im}}\bigl(\Theta(H-\kappa^2)\bigr)
&=
\tfrac12\mathop{\mathrm{Re}}(\Theta'A)
-2\lambda\mu\Theta
+(r_1+r_2)\Theta 
,
\end{split}
\label{26052223}
\end{align}
and 
\begin{align}
\begin{split}
\mathop{\mathrm{Im}}\bigl(A\Theta(H-\kappa^2)\bigr)
&=
\tfrac12\bigl(f^{-1}\Theta-\Theta'\bigr) L
-2\lambda\mu\mathop{\mathrm{Re}}(\Theta A)
+(\lambda^2-\mu^2)\Theta' 
+\tfrac18 \Theta'''
\\&\phantom{{}={}}
-\bigl(q_1+\tfrac12q_2\bigr)\Theta'
-\tfrac12(\partial_fq_1)\Theta 
-\mathop{\mathrm{Im}}(q_2\Theta A)
\\&\phantom{{}={}}
+\mathop{\mathrm{Re}}\bigl((r_1+r_2)\Theta A\bigr)
+\tfrac12\mathop{\mathrm{Re}}\bigl(\Theta'(H-\kappa^2)\bigr)
\end{split}
\label{26052316}
\\
\begin{split}
&=
\tfrac12A\Theta' A
+\tfrac12f^{-1}\Theta L
-2\lambda\mu\mathop{\mathrm{Re}}(\Theta A)
-\tfrac18 \Theta'''
\\&\phantom{{}={}}
+\tfrac12q_2\Theta'
-\tfrac12(\partial_fq_1)\Theta 
-\mathop{\mathrm{Im}}(q_2\Theta A)
\\&\phantom{{}={}}
+\mathop{\mathrm{Re}}\bigl((r_1+r_2)\Theta A\bigr)
-\tfrac12\mathop{\mathrm{Re}}\bigl(\Theta'(H-\kappa^2)\bigr)
.
\end{split}
\label{26052315}
\end{align}
\end{lemma}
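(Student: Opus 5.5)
Everything follows from the representation \eqref{260504}, $H-\kappa^2=\tfrac12A^2+\tfrac12L+q-\kappa^2$, together with the decomposition $q=(q_1+q_2)+\mathrm i(r_1+r_2)$ and $\kappa^2=(\lambda^2-\mu^2)+2\mathrm i\lambda\mu$. We only use elementary commutator identities for functions of $f$ on $\mathop{\mathrm{supp}}\Theta$, where $f=|x|$. The basic facts we rely on are:
\begin{itemize}
\item $\mathrm i[A,g(f)]=g'(f)$ for any function $g$ of $f$;
\item $A$ is symmetric;
\item $L$ commutes with functions of $f$, since $\ell_{jk}\partial_kf=0$;
\item the commutator $\mathrm i[A,L]$ equals $2f^{-1}L$ on $\mathop{\mathrm{supp}}\Theta$. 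This is the standard identity for the radial translation generator and the Laplace--Beltrami part; it follows from $\partial_i\partial_jf=f^{-1}\ell_{ij}$, because $L=f^{-2}\times(\text{spherical Laplacian})$ and radial differentiation of $f^{-2}$ gives $-2f^{-3}$.
\end{itemize}
We also need the symmetrization $\tfrac12(\Theta A^2+A^2\Theta)=A\Theta A-\tfrac12\bigl[A,[A,\Theta]\bigr]=A\Theta A-\tfrac12\Theta''$.

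\textbf{Formula \eqref{26052222}.} Take the real part of $\Theta(H-\kappa^2)$. The kinetic part gives $\tfrac14(\Theta A^2+A^2\Theta)=\tfrac12A\Theta A-\tfrac14\Theta''$. The $L$ part gives $\tfrac12\Theta L$, since $\Theta$ commutes with $L$. The multiplication terms give $(q_1+q_2)\Theta-(\lambda^2-\mu^2)\Theta$.

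\textbf{Formula \eqref{26052223}.} Take the imaginary part, i.e.\ $\tfrac1{2\mathrm i}[\Theta,\tfrac12A^2]$ plus the imaginary parts of the potential and $\kappa^2$. We have $\tfrac1{2\mathrm i}[\Theta,A^2]=\tfrac1{2\mathrm i}\bigl(A[\Theta,A]+[\Theta,A]A\bigr)$ and $[\Theta,A]=\mathrm i\Theta'$. This yields $\tfrac12\mathop{\mathrm{Re}}(\Theta'A)$. The remaining terms are $(r_1+r_2)\Theta-2\lambda\mu\Theta$.

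\textbf{Formula \eqref{26052316}.} Write
\[
\mathop{\mathrm{Im}}\bigl(A\Theta(H-\kappa^2)\bigr)=\tfrac1{2\mathrm i}\bigl(A\Theta P-P^*\Theta A\bigr),
\qquad P=H-\kappa^2,
\]
and split $P=P_{\mathrm R}+\mathrm iP_{\mathrm I}$ with $P_{\mathrm R}=\tfrac12A^2+\tfrac12L+q_1+q_2-(\lambda^2-\mu^2)$ symmetric and $P_{\mathrm I}=r_1+r_2-2\lambda\mu$. The imaginary piece contributes $\mathop{\mathrm{Re}}\bigl((r_1+r_2-2\lambda\mu)\Theta A\bigr)$.

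The symmetric piece contributes $\tfrac1{2\mathrm i}(A\Theta P_{\mathrm R}-P_{\mathrm R}\Theta A)$. We compute this term by term, since $P_{\mathrm R}$ consists of operators whose commutators with $A$ and $\Theta$ are known.
\begin{itemize}
\item For $L$, we have $\tfrac1{4\mathrm i}(A\Theta L-L\Theta A)=\tfrac1{4\mathrm i}\Theta[A,L]+\tfrac1{4\mathrm i}[A,\Theta]L$ up to symmetrization. Since $L$ commutes with $\Theta$, this gives $\tfrac12f^{-1}\Theta L-\tfrac12\Theta' L$ after symmetrizing.
\item For $q_1$, the term $\tfrac1{2\mathrm i}(A\Theta q_1-q_1\Theta A)$ gives $-\tfrac12\partial_f(q_1\Theta)=-\tfrac12(\partial_fq_1)\Theta-\tfrac12q_1\Theta'$.
\item For $q_2$, which is nondifferentiable, we keep $-\mathop{\mathrm{Im}}(q_2\Theta A)$ together with the piece $-\tfrac12q_2\Theta'$ coming from reordering.
\item For $A^2$, the term $\tfrac1{4\mathrm i}(A\Theta A^2-A^2\Theta A)=\tfrac1{4\mathrm i}A[\Theta,A]A=\tfrac14A\Theta'A$.
\end{itemize}
Now rewrite $\tfrac14A\Theta'A=\tfrac12\mathop{\mathrm{Re}}(\Theta'\cdot\tfrac12A^2)+\tfrac18\Theta'''$. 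We then replace $\tfrac12A^2$ by $P_{\mathrm R}-\tfrac12L-\dots$, and so generate $\tfrac12\mathop{\mathrm{Re}}\bigl(\Theta'(H-\kappa^2)\bigr)$. To do this we must also account for the imaginary parts: $\mathop{\mathrm{Re}}\bigl(\Theta'\mathrm iP_{\mathrm I}\bigr)=0$ since $\Theta'P_{\mathrm I}$ is a multiplication operator. Collecting the leftover terms $-\tfrac12\Theta'L$, $+(\lambda^2-\mu^2)\Theta'$, $-(q_1+\tfrac12 q_2)\Theta'$ gives \eqref{26052316}.

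\textbf{Formula \eqref{26052315}.} Instead write $\tfrac14A\Theta'A=\tfrac12A\Theta'A-\tfrac12\mathop{\mathrm{Re}}(\Theta'\tfrac12A^2)-\tfrac18\Theta'''$, and substitute the same expression with the opposite sign.

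The main obstacle is bookkeeping. The steps most prone to error are the coefficients of $\Theta'''$ ($\pm\tfrac18$) and the $\tfrac12q_2\Theta'$ terms; both come from commuting $A$ past a nondifferentiable $q_2$ and from the double-commutator symmetrization. We will verify each of these by applying both sides to a test function $u\in C_c^\infty$ and using $\mathop{\mathrm{Re}}(g A^2)=Ag A-\tfrac12g''$ repeatedly. The identity $\mathrm i[A,L]=2f^{-1}L$ near $\mathop{\mathrm{supp}}\Theta$ is the only geometric input, and we check it in coordinates using $\partial_i\partial_jf=f^{-1}\ell_{ij}$ and $(\partial_if)\ell_{ij}=0$.
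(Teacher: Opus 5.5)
Your route is the same as the paper's. You expand with \eqref{260504}, compute the $L$-contribution, and then add or subtract \eqref{26052222} with $\Theta$ replaced by $\Theta'$; your rewriting of $\tfrac14A\Theta'A$ and substitution of $\tfrac12A^2$ is exactly that step. However, the one geometric identity you list has the wrong sign. As written, your bookkeeping does not produce the stated formulas; you recover them only by writing down the target.

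On $\{|x|\ge2\}$ one has $A=-\mathrm i(\partial_r+\tfrac{d-1}{2r})$ and $L=r^{-2}L_S$, where $L_S\ge0$ acts in the angular variables. Hence $[A,L]=-\mathrm i[\partial_r,r^{-2}]L_S=2\mathrm i f^{-1}L$, i.e.\ $\mathrm i[A,L]=-2f^{-1}L$. Your own heuristic $\mathrm i[A,g]=g'$ with $g=f^{-2}$ also gives this sign. Therefore
\[
\tfrac12\mathop{\mathrm{Im}}(A\Theta L)=\tfrac1{4\mathrm i}\bigl([A,\Theta]L+\Theta[A,L]\bigr)=\tfrac12f^{-1}\Theta L-\tfrac14\Theta'L,
\]
as in the paper. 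With your sign this would be $-\tfrac12f^{-1}\Theta L-\tfrac14\Theta'L$. Both \eqref{26052316} and \eqref{26052315} would then carry $-\tfrac12f^{-1}\Theta L$, which is the wrong sign on exactly the term that later supplies positivity (e.g.\ in \eqref{26052422l}).

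The coefficient attributions in your bullets also need fixing.
\begin{itemize}
\item The direct $L$-contribution has $-\tfrac14\Theta'L$, not $-\tfrac12\Theta'L$. The other $\mp\tfrac14\Theta'L$ comes from substituting $\tfrac12A^2$, which gives $-\tfrac12\Theta'L$ in \eqref{26052316} and a cancellation in \eqref{26052315}.
\item The $q_2$-part contributes exactly $\tfrac1{2\mathrm i}(A\Theta q_2-q_2\Theta A)=-\mathop{\mathrm{Im}}(q_2\Theta A)$, with no reordering remainder. No commutator of $A$ with $q_2$ is ever taken, so the non-differentiability of $q_2$ is irrelevant.
\item The terms $\mp\tfrac12q_2\Theta'$ in \eqref{26052316} and \eqref{26052315} come solely from the substitution. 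Counting $-\tfrac12q_2\Theta'$ in your $q_2$-bullet as well would give $-q_2\Theta'$ in \eqref{26052316} and $0$ in \eqref{26052315}.
\item Minor: $\tfrac12(\Theta A^2+A^2\Theta)=A\Theta A+\tfrac12[A,[A,\Theta]]$, since $[A,[A,\Theta]]=-\Theta''$. Your middle expression has the wrong sign, though your endpoint $A\Theta A-\tfrac12\Theta''$ is correct.
\end{itemize}
With these corrections your computation closes and reproduces the lemma exactly as in the paper.
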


\begin{remark}
The representations \eqref{26052222} and \eqref{26052316} play central rolls 
in the proofs of the Agmon estimates, 
while \eqref{26052315} in that of Rellich's theorem. 
For appropriate weights $\Theta$ their right-hand sides 
are \textit{weightedly} positive or negative definite on eigenstates 
with negligible boundary terms. 
On the other hand, the identity \eqref{26052223} is used to control the 
non-self-adjoint contribution of the form 
$\mathop{\mathrm{Re}}(\Theta' A)\sim 4\lambda\mu\Theta$. 
Note that a first order differential operator is in general not bounded above or below, 
but on eigenstates $\mathop{\mathrm{Re}}(\Theta' A)$ is almost an explicit scalar. 
\end{remark}

\begin{proof}
By \eqref{260504} we can compute the first two quantities \eqref{26052222} and \eqref{26052223} as 
\[
\begin{split}
\mathop{\mathrm{Re}}(\Theta(H-\kappa^2))
&=
\tfrac12\mathop{\mathrm{Re}}(\Theta A^2)
+\tfrac12\mathop{\mathrm{Re}}(\Theta L)
+\mathop{\mathrm{Re}}(\Theta q)
-\mathop{\mathrm{Re}}(\Theta \kappa^2)
\\&=
\tfrac12A\Theta A
-\tfrac14\Theta''
+\tfrac12\Theta L
+(q_1+q_2)\Theta q
-(\lambda^2-\mu^2)\Theta
,
\end{split}
\]
and 
\[
\begin{split}
\mathop{\mathrm{Im}}(\Theta(H-\kappa^2))
&=
\tfrac12\mathop{\mathrm{Im}}(\Theta A^2)
+\tfrac12\mathop{\mathrm{Im}}(\Theta L)
+\mathop{\mathrm{Im}}(\Theta q)
-\mathop{\mathrm{Im}}(\Theta \kappa^2)
\\&=
\tfrac12\mathop{\mathrm{Re}}(\Theta' A)
+(r_1+r_2)\Theta 
-2\lambda\mu\Theta
.
\end{split}
\]
As for the third quantity, 
we again use \eqref{260504} to compute it as 
\begin{equation}
\begin{split}
\mathop{\mathrm{Im}}\bigl(A\Theta(H-\kappa^2)\bigr)
&=
\tfrac12\mathop{\mathrm{Im}}(A\Theta A^2)
+\tfrac12\mathop{\mathrm{Im}}(A\Theta L)
+\mathop{\mathrm{Im}}(A\Theta q)
-\mathop{\mathrm{Im}}(A\Theta \kappa^2)
\\&
=
\tfrac14A\Theta' A
+\tfrac12\mathop{\mathrm{Im}}(A\Theta L)
-\tfrac12\Theta' q_1
-\tfrac12\Theta (\partial_fq_1)
\\&\phantom{{}={}}
+\mathop{\mathrm{Im}}(A\Theta q_2)
+\mathop{\mathrm{Re}}\bigl(A\Theta (r_1+r_2)\bigr)
+\tfrac12(\lambda^2-\mu^2)\Theta'
\\&\phantom{{}={}}
-2\lambda\mu\mathop{\mathrm{Re}}(A\Theta )
.
\end{split}
\label{26050417}
\end{equation}
Here the second term on the right-hand side of \eqref{26050417} is further computed as 
\[
\begin{split}
\tfrac12\mathop{\mathrm{Im}}(A\Theta L)
&=
\tfrac14\mathop{\mathrm{Im}}\bigl(((\partial_jf)p_j+p_j(\partial_jf))\Theta p_k\ell_{kl}p_l\bigr)
\\&
=
\tfrac12\mathop{\mathrm{Im}}\bigl(p_kA\Theta \ell_{kl}p_l\bigr)
+\tfrac14\mathop{\mathrm{Re}}\bigl(((\partial_j\partial_kf)p_j+p_j(\partial_j\partial_kf))\Theta \ell_{kl}p_l\bigr)
\\&
=
-\tfrac14 \Theta'L
+\tfrac12f^{-1}\Theta L
,
\end{split}
\]
and thus by substituting the right above identity to \eqref{26050417} we obtain 
\begin{equation}
\begin{split}
\mathop{\mathrm{Im}}\bigl(A\Theta(H-\kappa^2)\bigr)
&=
\tfrac14A\Theta' A
+\bigl(\tfrac12f^{-1}\Theta-\tfrac14 \Theta'\bigr) L
+\tfrac12(\lambda^2-\mu^2)\Theta'
\\&\phantom{{}={}}
-2\lambda\mu\mathop{\mathrm{Re}}(\Theta A)
-\tfrac12 q_1\Theta'
-\tfrac12(\partial_fq_1)\Theta 
\\&\phantom{{}={}}
-\mathop{\mathrm{Im}}(q_2\Theta A)
+\mathop{\mathrm{Re}}\bigl((r_1+r_2)\Theta A\bigr)
.
\end{split}
\label{260527}
\end{equation}
Finally by \eqref{26052222} with $\Theta$ replace by $\Theta'$ we have 
\[
\begin{split}
0&=
\tfrac14A\Theta' A
+\tfrac14\Theta' L
-\tfrac12(\lambda^2-\mu^2)\Theta' 
-\tfrac18 \Theta'''
+\tfrac12(q_1+q_2)\Theta'
-\tfrac12\mathop{\mathrm{Re}}\bigl(\Theta'(H-\kappa^2)\bigr)
,
\end{split}
\]
and by subtracting and adding the right above identity from and to 
\eqref{260527}, we obtain the last expressions \eqref{26052316} and \eqref{26052315}, respectively. 
\end{proof}

\subsection{Refined Yosida approximation}

Next we introduce a refinement of the Yosida approximation as follows.

\begin{definition}\label{25071914480}
Let $\rho\in (0,1)$ be from Assumption~\ref{cond:20022317}, 
and define the \textit{refined Yosida approximation} 
with parameter $\nu\in\mathbb N$ as 
\begin{equation*}
\theta=\int_0^f(1+\tau/2^\nu)^{-1-\rho}\,\mathrm d\tau
=\rho^{-1}2^\nu \bigl(1-(1+f/2^\nu)^{-\rho}\bigr). 
\end{equation*}
\end{definition}

The above $\theta$ is a non-decreasing bounded function for each $\nu\in\mathbb N$, 
but monotonically converges to $f$ pointwise as $\nu\to\infty$. 
It also gets some additional decay properties as differentiated. 
Let us state them in the following lemma. 
Denote the derivatives of $\theta$ in $f$ by primes as before, and set for short 
\[
\theta_0=1+f/2^\nu
.
\]
Then, for example, we can write 
\begin{align}
\begin{split}
\theta'=\theta_0^{-1-\rho},\quad
\theta''=-(1+\rho) 2^{-\nu}\theta_0^{-2-\rho},
\quad \ldots.
\end{split}
\label{eq:12.5.1.19.56}
\end{align} 

\begin{lemma}\label{2605242351}
The refined Yosida approximation $\theta$ satisfies 
\begin{equation*}
0< 
\theta
%\int_0^f(1+\tau/2^{\nu})^{-1-\rho}\,\mathrm d\tau
\le \min\{f,\rho^{-1} 2^{\nu}\}\ \ \text{for any }\nu\in\mathbb N
,\qquad 
\theta
%\int_0^f(1+\tau/2^{\nu})^{-1-\rho}\,\mathrm d\tau
\uparrow f\ \ \text{pointwise as }\nu\to\infty
.
\end{equation*}
In addition, for any $k=3,4,\ldots$ there exists $C_k>0$ such that  
\begin{equation*}
%0\le (\theta-2s f)'=2\sigma\theta_0^{-1-\rho}
%,\qquad 
0< -\theta''\leq (1+\rho)f^{-1}\theta'
,\qquad 
0< (-1)^{k-1}\theta^{(k)}%\leq C_{k}f^{2-k}|\theta''|
\leq C_{k}f^{1-k}\theta'
%;\quad k=2,3,\dots.
.
\end{equation*}
\end{lemma}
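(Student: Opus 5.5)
Everything will be read off from the closed form in Definition~\ref{25071914480}, treating $\theta$ as a function of the single variable $f\ge 1$ (recall $f\ge1$ everywhere, since $\chi\le1$ and $f=|x|$ for $|x|\ge2$). Throughout set $\theta_0=1+f/2^\nu$.

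\emph{Bounds and convergence.} The integrand $(1+\tau/2^\nu)^{-1-\rho}$ lies in $(0,1]$ for $\tau\ge0$. Integrating over $[0,f]$ with $f>0$ gives $0<\theta\le f$. The closed form gives $\theta<\rho^{-1}2^\nu$, because $(1+f/2^\nu)^{-\rho}>0$. Increasing $\nu$ increases $1+\tau/2^\nu$ pointwise in the denominator, so the integrand increases and converges to $1$. Monotone convergence then gives $\theta\uparrow\int_0^f1\,\mathrm d\tau=f$.

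\emph{Derivatives.} By induction, $\theta^{(k)}=(-1)^{k-1}(1+\rho)(2+\rho)\cdots(k-1+\rho)\,2^{-(k-1)\nu}\theta_0^{-k-\rho}$ for $k\ge1$; the case $k=1$ is $\theta'=\theta_0^{-1-\rho}$ from \eqref{eq:12.5.1.19.56}. In particular $(-1)^{k-1}\theta^{(k)}>0$ and $-\theta''>0$. For the upper bounds, write $\theta^{(k)}/\theta'$ as a constant times $(2^{-\nu}\theta_0^{-1})^{k-1}$. The key elementary inequality is
\[
2^{-\nu}\theta_0^{-1}=\frac{1}{2^\nu+f}\le f^{-1}.
\]
For $k=2$ this gives exactly $-\theta''=(1+\rho)2^{-\nu}\theta_0^{-1}\theta'\le(1+\rho)f^{-1}\theta'$. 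For $k\ge3$ it gives $(-1)^{k-1}\theta^{(k)}\le C_kf^{1-k}\theta'$ with $C_k=\prod_{j=1}^{k-1}(j+\rho)$, which is independent of $\nu$.

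There is no real obstacle. The only point needing care is to keep the constants independent of $\nu$, and the inequality $2^{-\nu}/\theta_0\le f^{-1}$ handles this uniformly.
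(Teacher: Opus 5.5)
Your proposal is correct and follows the paper's proof: the bounds and the monotone convergence are read off from the definition, and the derivative estimates follow from the explicit formulas \eqref{eq:12.5.1.19.56} together with $2^{-\nu}\theta_0^{-1}=(2^\nu+f)^{-1}\le f^{-1}$, which makes the constants independent of $\nu$. There is one wording slip: increasing $\nu$ \emph{decreases} $1+\tau/2^\nu$, and that is why the integrand $(1+\tau/2^\nu)^{-1-\rho}$ increases, so your conclusion is right but the stated reason is reversed.
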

\begin{proof}
The former properties are obvious by definition. 
The latter follows by \eqref{eq:12.5.1.19.56}
and $2^{-\nu}\theta_0^{-1}\le f^{-1}$. Thus we are done. 
\end{proof}

\section{Agmon estimates}

In this section we prove the Agmon estimates, or Theorem~\ref{260603}. 
We have to verify the critical exponential decay estimate for an eigenfunction from the critical 
exponential growth assumption, and there indeed seems to be a big gap. 
Let us divide it into the following three steps. 

\begin{proposition}\label{26060301a}
Suppose Assumption~\ref{cond:20022317} with (a), 
fix any $\kappa=\lambda+\mathrm i\mu\in \mathbb C_+\setminus\mathbb R$
and $s\in (0,\sqrt2\mu)$, 
and let $t>0$ be sufficiently large.
\begin{enumerate}
\item\label{26060301}
If $\phi\in  \mathcal E_{\sqrt2\mu,-t}$ solves \eqref{200125}, then $\phi\in \mathcal E_{s,-t}$. 
\item\label{26060302}
If $\phi\in  \mathcal E_{\mu-0}$ solves \eqref{200125}, then $\phi\in \mathcal E_{-\mu+0}$. 
\item\label{26060303}
If $\phi\in  \mathcal E_{-s,t}$ solves \eqref{200125}, then $\phi\in \mathcal E_{-\sqrt2\mu,t}$. 
\end{enumerate}
\end{proposition}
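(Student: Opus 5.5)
All three assertions will be proved by the commutator scheme of the preceding section. We apply the identities \eqref{26052222}, \eqref{26052223} and \eqref{26052316} to the eigenfunction $\phi$ with weights of the form
\[
\Theta=\chi_m^2\,\mathrm e^{2\Psi},
\]
where $\chi_m=1-\chi(f/2^m)$ (or a smooth cut-off in $f$ supported in $\{|x|\ge 2\}$) and $\Psi$ is an exponent built from $f$, $f^{1-\rho}$ and the refined Yosida approximation $\theta$ of Definition~\ref{25071914480}. Since $(H-\kappa^2)\phi=0$, the expectations
\[
\langle\phi,\mathop{\mathrm{Re}}(\Theta(H-\kappa^2))\phi\rangle,\quad
\langle\phi,\mathop{\mathrm{Im}}(\Theta(H-\kappa^2))\phi\rangle,\quad
\langle\phi,\mathop{\mathrm{Im}}(A\Theta(H-\kappa^2))\phi\rangle
\]
all vanish. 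These pairings are justified by a standard local regularity argument ($\phi\in H^2_{\mathrm{loc}}$) together with the compact support coming from the Yosida cut-off, which makes $\Theta$ bounded. The cut-off derivatives produce boundary terms supported on a fixed compact set; they are harmless for the conclusions. Throughout, \eqref{26052223} is used to replace $\tfrac12\mathop{\mathrm{Re}}(\Theta'A)$ by $2\lambda\mu\Theta$ up to $\mathcal O(f^{-\rho})\Theta$ errors. This is how the non-self-adjoint first order terms $-2\lambda\mu\mathop{\mathrm{Re}}(\Theta A)$ get controlled: they are first rewritten with $\Theta$ replaced by a primitive-type weight.

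For 1, we use \eqref{26052316} with $\Psi=s f-t f^{1-\rho}$ and replace $f$ by $\theta$ in the growing part, so that $\Theta$ is bounded and $\phi\in\mathcal E_{\sqrt2\mu,-t}$ makes all pairings finite. Heuristically, on $\phi$ we have $A\approx\pm\sqrt2\kappa$ radially. With $\Theta'=2\Psi'\Theta$ and $\Psi'\approx s<\sqrt2\mu$, the leading scalar part
\[
(\lambda^2-\mu^2)\Theta'-2\lambda\mu\mathop{\mathrm{Re}}(\Theta A)+\tfrac12\mathop{\mathrm{Re}}(\Theta'(H-\kappa^2))
\]
combines with $\tfrac12(f^{-1}\Theta-\Theta')L$ and with the $A\Theta'A$ term (obtained via \eqref{26052222}) into a quadratic expression in $A$ and $\Psi'$. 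This expression is definite with a gain $c\,(2\mu^2-s^2)\Theta'$ or similar. The $t f^{1-\rho}$ term supplies a lower order positive contribution $t f^{-\rho}\Theta$ that dominates the $\mathcal O(f^{-\rho})$ errors from $q_2,r_2$ once $t$ is large. This yields a uniform bound on $\|\mathrm e^{\Psi_\nu}\phi\|$ in $\nu$, and monotone convergence as $\nu\to\infty$ gives $\phi\in\mathcal E_{s,-t}$.

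For 2 and 3 we argue similarly. In 2 the exponent is $\Psi=\sigma\theta$ with $\sigma$ stepping down from $\mu-\epsilon$ to $-\mu+\epsilon$. At each step the quadratic form in \eqref{26052222} is positive because $\sigma^2<\mu^2$ forces $\tfrac12(A\text{-part})-(\lambda^2-\mu^2)$ plus the $\Theta''$ term to have a sign after eliminating the imaginary cross term by \eqref{26052223}. In 3 the exponent is $\Psi=-\sigma f+t f^{1-\rho}$ with $\sigma\uparrow\sqrt2\mu$, and the $t$-term again absorbs the long-range errors.

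The main obstacle is the \emph{algebra of positivity}: showing that, after eliminating $\mathop{\mathrm{Re}}(\Theta A)$ through \eqref{26052223}, the remaining form $\tfrac12A\Theta A-(\lambda^2-\mu^2)\Theta+\ldots$ completes to a square with a strictly positive remainder proportional to $(2\mu^2-\Psi'^2)\Theta$. This has to hold for the first order term whose sign is not fixed, and it has to hold uniformly in the Yosida parameter $\nu$. To do this, I would first treat the model case $q=0$ in one dimension, where $A\phi=\pm\sqrt2\kappa\phi$ exactly, to find the correct linear combination of \eqref{26052222}, \eqref{26052223} and \eqref{26052316}. I would then check that the $L$-term carries the favorable sign $f^{-1}\Theta-\Theta'$ or $\Theta L\ge0$. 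Finally, I would check that the derivatives of $\theta$, controlled by Lemma~\ref{2605242351}, only create errors of relative size $f^{-1}$.
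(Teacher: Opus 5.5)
Your toolkit matches the paper's: the identities \eqref{26052222}, \eqref{26052223}, \eqref{26052316}, a gain of order $tf^{-\rho}\Theta$ from the subexponential term, the refined Yosida approximation, and monotone convergence. However, the weights you propose do not work, and choosing them is the heart of the proof.

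First, you use the growth exponent of the target space as the exponent of $\Theta$, which reverses every sign. Since $\phi\in\mathcal E_{s,-t}$ means $\mathrm e^{-sf+tf^{1-\rho}}\phi\in L^2$, the weight in 1 must approach $\mathrm e^{2(-sf+tf^{1-\rho})}$, not $\mathrm e^{2(sf-tf^{1-\rho})}$; with yours you would be proving a decay bound. Likewise, in 2 the exponent must increase from $-sf$ toward $tf$ with $s,t<\mu$, and in 3 it must approach $\sqrt2\mu f-tf^{1-\rho}$.

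Second, a merely bounded weight such as $\mathrm e^{2(s\theta-tf^{1-\rho})}$ or $\mathrm e^{2\sigma\theta}$ does not make $\langle\phi,\Theta\phi\rangle$ finite when $\phi$ may grow exponentially. The missing idea is to start from the exponent dual to the hypothesis and add a Yosida increment toward the target. The paper takes $\widetilde\theta=-\sqrt2\mu f+(\sqrt2\mu-s)\theta+tf^{1-\rho}$ in 1, $\widetilde\theta=-sf+(s+t)\theta$ in 2, and $\widetilde\theta=sf+(\sqrt2\mu-s)\theta-tf^{1-\rho}$ in 3. For fixed $\nu$ the weight $\mathrm e^{2\widetilde\theta}$ is then at most $C_\nu$ times the hypothesis weight, and this is exactly what is used.

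The Yosida approximation gives boundedness, not compact support. So the paper also inserts the outer cut-off $\chi_n$ and tests the operator inequality on $\chi_{m-2,n+2}\phi$. It then lets $n\to\infty$, where the outer term $C_\nu\|\chi_{n-1,n+1}f^{-1/2}\mathrm e^{-\sqrt2\mu f+tf^{1-\rho}}\phi\|^2$ vanishes by the hypothesis, and only afterwards lets $\nu\to\infty$. Your claim that all boundary terms live on a fixed compact set skips the one place where the hypothesis enters. No stepping in $\sigma$ is needed in 2, since $\widetilde\theta'\in(-s,t]\subset(-\mu,\mu)$ for every $\nu$.

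The reversed sign also destroys the positivity you claim in 1. In \eqref{26052316} there is no $A\Theta'A$ term and nothing to complete to a square. The only first-order term is $-2\lambda\mu\mathop{\mathrm{Re}}(\Theta A)$, which \eqref{26052223} turns, on eigenstates and up to admissible errors, into $-4\lambda^2\mu^2\widetilde\theta'^{-1}\Theta$. The scalar part therefore becomes $-\widetilde\theta'^{-1}(2\lambda^2+\widetilde\theta'^2)(2\mu^2-\widetilde\theta'^2)\Theta$, while the coefficient of $L$ is $\tfrac12(f^{-1}-2\widetilde\theta')\Theta$ up to cut-off terms. Both are nonnegative only when $\widetilde\theta'<0$.

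This is why 1 uses a decreasing exponent, with $\widetilde\theta'\in(-\sqrt2\mu+(1-\rho)tf^{-\rho},-C^{-1}]$ so that $2\mu^2-\widetilde\theta'^2\gtrsim tf^{-\rho}$, and proves positivity. Part 3 instead uses $\widetilde\theta'>0$ and proves negativity of the same expression. In each case the sign of $\pm tf^{1-\rho}$ is chosen to keep $|\widetilde\theta'|$ below $\sqrt2\mu$ by a margin $\gtrsim tf^{-\rho}$. With your $\Psi'\approx s>0$, both the scalar part and the $L$-coefficient are negative, so the gain $c(2\mu^2-s^2)\Theta'$ you expect does not occur.

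Finally, your stated main obstacle mixes two mechanisms. Square completion is needed only in 2, via \eqref{26052222}, where the condition is $\widetilde\theta'^2<\mu^2$ and the remainder is $\lambda^2(3-2\mu^{-2}\widetilde\theta'^2)+\mu^2-\widetilde\theta'^2$. The threshold $2\mu^2$ belongs to \eqref{26052316}.
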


\begin{proof}[Deduction of Theorem~\ref{260603} from Proposition~\ref{26060301a}]
It is trivial. 
\end{proof}

Now the proof of Theorem~\ref{260603} is reduced to that of Proposition~\ref{26060301a}.
We will prove the assertions \ref{26060301}, \ref{26060302} and \ref{26060303} 
of Proposition~\ref{26060301a}, 
respectively, in Sections~\ref{2606035}, \ref{2606036} and \ref{2606037}. 
For their proofs we will repeat mutually look-alike commutator arguments, 
however, with different weight functions, 
showing positivity of \eqref{26052316}, \eqref{26052222}, and negativity of \eqref{26052316}, 
respectively. 
Each weight function contains as much as four or five parameters, 
but uniformity only in two of them is essential. 

The argument in Section~\ref{2606035} will be a prototype of those in 
Sections~\ref{2606036} and \ref{2606037}, and even in Section~\ref{26060716}. 
Detailed explanations may be omitted later than Section~\ref{2606035}.

\subsection{Small exponential growth estimates}\label{2606035}

First we prove the assertion \ref{26060301} of Proposition~\ref{26060301a}. 
A key is to focus on the representation \eqref{26052316}, not on \eqref{26052315}
as in our previous work~\cite{MR4062329} on Rellich's theorem. 
Take a weight function $\Theta$ of the form 
\begin{equation}
\Theta=\chi_{m,n}\mathrm e^{2\widetilde\theta}
;\quad 
\widetilde\theta=-\sqrt2\mu f+\bigl(\sqrt2\mu-s\bigr)\theta+tf^{1-\rho},
\label{2507191448e}
\end{equation}
depending on parameters $m,n,\nu\in\mathbb N$, $s\in (0,\sqrt2\mu)$ and 
$t\ge 1$ with $n>m$. 
Recall that $\theta$ is the refined Yosida approximation from Definition~\ref{25071914480},
and $\chi_{m,n}$ is a smooth cut-off function defined as 
\begin{equation}
\chi_m=\chi(f/2^m)
,\quad 
\bar\chi_m=1-\chi_m
,\quad 
\chi_{m,n}=\bar\chi_m\chi_n
,
\label{eq:11.7.11.5.14}
\end{equation}
where $\chi\in C^\infty(\mathbb R)$ obeys \eqref{eq:14.1.7.23.24}. 

\begin{lemma}\label{26060317e}
Suppose Assumption~\ref{cond:20022317} with (a), 
and fix any $\kappa=\lambda+\mathrm i\mu\in\mathbb C_+\setminus\mathbb R$
and $s\in (0,\sqrt2\mu)$. 
Then there exist $c,C> 0$, $m\in\mathbb N$ and $t\ge 1$ such that 
uniformly in $n>m$ and $\nu\in\mathbb N$
\[
\begin{split}
\mathop{\mathrm{Im}}\bigl(A\Theta(H-\kappa^2)\bigr)
&\ge c\Theta-C\bigl(\chi_{m-1,m+1}^2+\chi_{n-1,n+1}^2\bigr)f^{-1}\mathrm e^{2\widetilde\theta}
\\&\phantom{{}={}}
-C(H-\kappa^2)^*\chi_{m-1,n+1}f^{\rho}\mathrm e^{2\widetilde\theta}(H-\kappa^2)
.
\end{split}
\]  
\end{lemma}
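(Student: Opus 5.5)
The plan is to start from the representation \eqref{26052316} with $q_1=r_1=0$ under Assumption~\ref{cond:20022317}~(a), and to reduce everything to an explicit scalar symbol in the radial "velocity" $w:=-\widetilde\theta'$. On $\mathop{\mathrm{supp}}\chi_{m,n}$ away from the transition regions of the cut-off we have $\Theta'=2\widetilde\theta'\Theta$ and $\Theta'''=(8\widetilde\theta'^3+\mathcal O(f^{-1-\rho}))\Theta$, where
\[
w=\sqrt2\mu-(\sqrt2\mu-s)\theta'-t(1-\rho)f^{-\rho}.
\]
By Lemma~\ref{2605242351} we have $0<\theta'\le1$, and $\theta''$, $\theta'''$ carry the extra factors $f^{-1}$, $f^{-2}$. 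Hence, once $m$ is taken large depending on $t$, we get
\[
s/2\le w\le \sqrt2\mu-t(1-\rho)f^{-\rho}
\]
on the support, uniformly in $n$ and $\nu$. Derivatives falling on $\chi_{m,n}$ are supported where $\chi_{m-1,m+1}$ or $\chi_{n-1,n+1}$ is nonzero and carry a factor $f^{-1}$. I will put all of them into the boundary term $C(\chi_{m-1,m+1}^2+\chi_{n-1,n+1}^2)f^{-1}\mathrm e^{2\widetilde\theta}$.

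With these formulas the $L$-term is easy. Since $w>0$ and $L\ge0$, the coefficient $\tfrac12(f^{-1}\Theta-\Theta')=\tfrac12(f^{-1}+2w)\Theta$ is nonnegative up to cut-off errors, so this term can simply be dropped.

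The crucial, genuinely non-self-adjoint term is $-2\lambda\mu\mathop{\mathrm{Re}}(\Theta A)$, which is a first order operator of indefinite sign. To turn it into a scalar I will apply \eqref{26052223} with $\Theta$ replaced by $\Xi:=\Theta/(2\widetilde\theta')$. This $\Xi$ is admissible under Assumption~\ref{26060220} and satisfies $\Xi'=\Theta+\mathcal O(f^{-1-\rho})\Theta+{}$(cut-off terms). This gives
\[
\mathop{\mathrm{Re}}(\Theta A)=-\tfrac{2\lambda\mu}{w}\Theta+\tfrac{r_2}{w}\Theta+2\mathop{\mathrm{Im}}\bigl(\Xi(H-\kappa^2)\bigr)+\text{errors}.
\]
Substituting, the principal scalar part of \eqref{26052316} becomes
\[
\Bigl(-2(\lambda^2-\mu^2)w+\tfrac{4\lambda^2\mu^2}{w}-w^3\Bigr)\Theta
=\tfrac1w\Bigl((\lambda^2+\mu^2)^2-(w^2+\lambda^2-\mu^2)^2\Bigr)\Theta .
\]
This is positive exactly when $w^2<2\mu^2$. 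By the upper bound on $w$ it is bounded below by $c\,t f^{-\rho}\Theta$, and by $c\Theta$ wherever $(\sqrt2\mu-s)\theta'$ is not small. The decisive positive margin is the $t$-gap.

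It remains to absorb the other terms. The terms $\tfrac12q_2\Theta'$ and $r_2\Theta/w$ are $\mathcal O(f^{-\rho})\Theta$ with $t$-independent constants, so they are absorbed by the $t f^{-\rho}$ gap once $t$ is large. The first order terms $-\mathop{\mathrm{Im}}(q_2\Theta A)$ and $\mathop{\mathrm{Re}}(r_2\Theta A)$ I will bound by Cauchy--Schwarz as $\epsilon f^{-\rho}A\Theta A+C_\epsilon f^{-\rho}\Theta$. I then control $A\Theta A$ by \eqref{26052222} (with $\Theta$ replaced by $f^{-\rho}\Theta$), which gives $A\Theta A\le C\Theta+{}$(boundary)$+{}$(terms in $H-\kappa^2$). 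The remaining forms $\tfrac12\mathop{\mathrm{Re}}(\Theta'(H-\kappa^2))$ and $\mathop{\mathrm{Im}}(\Xi(H-\kappa^2))$ I will split as $\epsilon f^{-\rho}\Theta+C_\epsilon(H-\kappa^2)^*\chi_{m-1,n+1}f^{\rho}\mathrm e^{2\widetilde\theta}(H-\kappa^2)$, which produces the last term of the lemma.

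The main obstacle is the choice of parameter order. All these $\epsilon$- and $C$-constants must be independent of $n$ and $\nu$, and $t$ must be chosen before $m$, because the lower bound $w\ge s/2$ needs $f^{-\rho}t$ small on $\mathop{\mathrm{supp}}\bar\chi_m$. The other delicate point is making sure the gap bound really yields the stated $c\Theta$, rather than only $c f^{-\rho}\Theta$. If the sharp form fails, I would first try to use the $\theta'$-part of the gap together with rescaling the constants, and otherwise accept $cf^{-\rho}\Theta$, which suffices for the subsequent Yosida limit argument.
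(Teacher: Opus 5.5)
Your proposal is correct and is essentially the paper's proof. It uses the same starting identity \eqref{26052316}, and the same conversion of $-2\lambda\mu\mathop{\mathrm{Re}}(\Theta A)$ into $4\lambda^2\mu^2w^{-1}\Theta$ via \eqref{26052223}: you insert the weight $\Theta/(2\widetilde\theta')$ directly, whereas the paper conjugates $\mathop{\mathrm{Re}}(\Theta'A)$ by $(-\widetilde\theta')^{-1/2}$. It then uses the same symbol $w^{-1}(2\lambda^2+w^2)(2\mu^2-w^2)\ge c\,tf^{-\rho}$, the same $t$-gap absorbing the $t$-independent $\mathcal O(f^{-\rho})$ errors, and the same order of choosing $t$ before $m$.

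As for your worry, the paper's own argument also ends with the coefficient $(C^{-1}t-C-\cdots)f^{-\rho}\Theta$, so it too only delivers $cf^{-\rho}\Theta$. The $\theta'$-part of the gap cannot improve this uniformly in $n$, because $\theta'$ is negligible on $\{f\gg 2^\nu\}$. There the $c\Theta$ form actually fails for large $n$: take $d=1$, $V=0$, and cut off $\mathrm e^{-\mathrm i\sqrt2\kappa f}$ to a shell $\{R\le f\le R+R^{\rho}\}$ with $R\gg2^\nu$. So your fallback is the right statement, and it suffices for the subsequent limit argument.
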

\begin{remarks}
\begin{enumerate}
\item
The above inequality might seem extremely technical, but we may consider it
a variant of the divergence theorem. 
Recall that the left-hand and the last term on the right-hand side vanish
on an eigenstate $\phi$. 
Then it only suggests that the internal integral of $\Theta|\phi|^2$ in a region 
$\{2^m\lesssim f\lesssim 2^n\}$ 
is controlled by \textit{thick} boundary terms around $\{f\sim 2^m\}$ and $\{f\sim 2^n\}$. 
\item
Lemma~\ref{26060317e} verifies the assertion \ref{26060301} of Proposition~\ref{26060301a} as follows.
Take an expectation value of the inequality on $\phi$, and let $n\to\infty$. 
Then the outer boundary term vanishes due to the assumption,
and the integral of $\mathrm e^{2\widetilde\theta}|\phi|^2$ on $\{f\gtrsim 2^m\}$ is bounded by 
the inner boundary term uniformly in $\nu$. 
Next, let $\nu\to\infty$, and an additional exponential decay follows for $\phi$. 
\end{enumerate}
\end{remarks}

\begin{proof}
Let $\kappa=\lambda+\mathrm i\mu\in\mathbb C_+\setminus\mathbb R$ and 
$s\in (0,\sqrt2\mu)$ be fixed. 
On the other parameters $n,m,\nu\in\mathbb N$ and $t\ge 1$,
we first assume only $n>m$,
and an additional restriction will be imposed when it appears necessary.
The below estimates are uniform in these parameters 
under the restrictions until then,
and a constant $C>0$ appearing below, possibly retaken larger line by line, 
is independent of them. 

Let us start to compute the left-hand side of the asserted inequality. 
In the following, for simplicity, we gather \textit{admissible error terms}, and absorb them into 
\begin{equation}
\begin{split}
Q&=
f^{-\rho}\Theta
+t^2f^{-2\rho}\Theta
+t^2\bigl(|\chi_{m,n}'|+|\chi_{m,n}''|+|\chi_{m,n}'''|\bigr)\mathrm e^{2\widetilde\theta}
\\&\phantom{{}={}}
+p_j\bigl(\chi_{m,n}f^{-\rho}+|\chi_{m,n}'|\bigr)\mathrm e^{2\widetilde\theta} p_j
+(H-\kappa^2)^*\bigl(f^{\rho}\Theta+|\chi_{m,n}'|\mathrm e^{2\widetilde\theta}\bigr)(H-\kappa^2)
,
\end{split}
\label{260524e}
\end{equation}
which is dependent on $t$. 
It is very useful, e.g., since, once a differentiation acts on $\chi_{m,n}$, 
then the corresponding term is absorbed into $Q$. 
We will estimate it later on. We also note that in the following arguments 
we shall freely use \eqref{2507191448e}, \eqref{260524e}, \eqref{eq:12.5.1.19.56}, Lemma~\ref{2605242351}
and the Cauchy--Schwarz inequality without reference. 
Now by using the expression \eqref{26052316} we have 
\begin{equation}
\begin{split}
\mathop{\mathrm{Im}}\bigl(A\Theta(H-\kappa^2)\bigr)
&\ge 
\tfrac12\bigl(f^{-1}\Theta-\widetilde\theta'\Theta\bigr) L
-2\lambda\mu\mathop{\mathrm{Re}}(\Theta A)
\\&\phantom{{}={}}
+2\bigl(\lambda^2-\mu^2\bigr)\widetilde\theta'\Theta
+\widetilde\theta'^3\Theta
-CQ
.
\end{split}
\label{26052420e}
\end{equation}
Let us take a closer look at the second term on the right-hand side of \eqref{26052420e}. 
Let $m$ be large enough, depending on $t$, so that 
\begin{equation}
\widetilde\theta'\le -C^{-1}\ \ \text{on }\mathop{\mathrm{supp}}\chi_{m,n}
.
\label{26061116e}
\end{equation}
Then we can rewrite and estimate the second term of \eqref{26052420e}
by using \eqref{26052223} as
\begin{equation}
\begin{split}
-2\lambda\mu\mathop{\mathrm{Re}}(\Theta A)
&\ge 
\lambda\mu\bigl(-\widetilde\theta'\bigr)^{-1/2}\mathop{\mathrm{Re}}(\Theta' A)\bigl(-\widetilde\theta'\bigr)^{-1/2}
-CQ
\\&\ge 
-4\lambda^2\mu^2\widetilde\theta'\Theta
-CQ
.
\end{split}
\label{260611e}
\end{equation}
Thus by combining \eqref{26052420e} and \eqref{260611e}
and then using \eqref{26061116e} we obtain 
\begin{equation}
\begin{split}
\mathop{\mathrm{Im}}\bigl(A\Theta(H-\kappa^2)\bigr)
&\ge 
\tfrac12\bigl(f^{-1}\Theta-\widetilde\theta'\Theta\bigr) L
-\widetilde\theta'^{-1}\bigl(2\lambda^2+\widetilde\theta'^2\bigr)\bigl(2\mu^2-\widetilde\theta'^2\bigr)\Theta 
\\&\phantom{{}={}}
-CQ
\\&\ge 
C^{-1}tf^{-\rho}\Theta 
-CQ
.
\end{split}
\label{26061115e}
\end{equation}
Finally we estimate $Q$. Since the second term of \eqref{260524e} can be 
rewritten as 
\[
\begin{split}
p_j\bigl(\chi_{m,n}f^{-\rho}+|\chi_{m,n}'|\bigr)\mathrm e^{2\widetilde\theta} p_j
&=
\tfrac12\bigl(f^{-\rho}\Theta+|\chi_{m,n}'|\mathrm e^{2\widetilde\theta}\bigr)''
\\&\phantom{{}={}}
+2\bigl(f^{-\rho}\Theta+|\chi_{m,n}'|\mathrm e^{2\widetilde\theta}\bigr) \bigl(\lambda^2-\mu^2-q_1-q_2\bigr)
\\&\phantom{{}={}}
+2\mathop{\mathrm{Re}}\bigl((f^{-\rho}\Theta+|\chi_{m,n}'|\mathrm e^{2\widetilde\theta}) (H-\kappa^2)\bigr)
,
\end{split}
\]
we have 
\begin{equation}
\begin{split}
Q&
\le 
Cf^{-\rho}\Theta
+Ct^2f^{-2\rho}\Theta
+Ct^2\bigl(\chi_{m-1,m+1}^2+\chi_{n-1,n+1}^2\bigr)f^{-1}\mathrm e^{2\widetilde\theta}
\\&\phantom{{}={}}
+C(H-\kappa^2)^*\chi_{m-1,n+1}f^{\rho}\mathrm e^{2\widetilde\theta}(H-\kappa^2)
.
\end{split}
\label{26060922e}
\end{equation}
Thus by substituting \eqref{26060922e} into \eqref{26061115e} we obtain 
\begin{equation}
\begin{split}
\mathop{\mathrm{Im}}\bigl(A\Theta(H-\kappa^2)\bigr)
&\ge 
\bigl(C^{-1}t-C-Ctf^{-\rho}\bigr)f^{-\rho}\Theta
\\&\phantom{{}={}}
-Ct^2\bigl(\chi_{m-1,m+1}^2+\chi_{n-1,n+1}^2\bigr)f^{-1}\mathrm e^{2\widetilde\theta}
\\&\phantom{{}={}}
-C(H-\kappa^2)^*\chi_{m-1,n+1}f^{\rho}\mathrm e^{2\widetilde\theta}(H-\kappa^2)
.
\end{split}
\label{26061118}
\end{equation}
Now, fix $t$ large enough, 
which also makes $m$ large to keep \eqref{26061116e}, 
but then retake $m$ even larger if necessary 
to make the coefficient of the first term on the above right-hand side positive. 
Then this in fact verifies the assertion. 
\end{proof}

\begin{proof}[Proof of the assertion \ref{26060301} of Proposition~\ref{26060301a}]
Fix any $\kappa=\lambda+\mathrm i\mu\in \mathbb C_+\setminus\mathbb R$ 
and $s\in (0,\sqrt2\mu)$, and take $m\in\mathbb N$ and $t\ge 1$ 
as in Lemma~\ref{26060317e}.
Let $\phi\in \mathcal E_{\sqrt2\mu,-t}$ be a solution to \eqref{200125}. 
Then take an expectation value of the inequality from Lemma~\ref{26060317e}
on the state $\chi_{m-2,n+2}\phi$,
and it follows that for any $n>m$ and $\nu\in\mathbb N$
\begin{equation}
\begin{split}
\bigl\|\chi_{m,n}\mathrm e^{-\sqrt2\mu f+(\sqrt2\mu-s)\theta+tf^{1-\rho}}\phi\bigr\|_{L^2}^2
&\le 
C\|\chi_{m-1,m+1}\phi\|_{L^2}^2
\\&\phantom{{}={}}
+C_\nu\bigl\|\chi_{n-1,n+1}f^{-1/2}\mathrm e^{-\sqrt2\mu f+tf^{1-\rho}}\phi\bigr\|_{L^2}^2,
\end{split}\label{2606031731e}
\end{equation}
where $C_\nu$ depends on $\nu$, but not on $n$, see Lemma~\ref{2605242351}. 
Let $n\to\infty$, and then the second term on the right-hand side of \eqref{2606031731e}
vanishes due to the assumption $\phi\in \mathcal E_{\sqrt2\mu,-t}$. 
On the other hand, we also apply Lebesgue's monotone convergence theorem to the left-hand side of \eqref{2606031731e}, 
and thus it follows that 
\begin{equation*}
\bigl\|\bar\chi_{m}\mathrm e^{-\sqrt2\mu f+(\sqrt2\mu-s)\theta+tf^{1-\rho}}\phi\bigr\|_{L^2}^2
\le 
C\|\chi_{m-1,m+1}\phi\|_{L^2}^2
.
\end{equation*}
Next we let $\nu\to\infty$, and by Lebesgue's monotone convergence theorem again 
we obtain 
\[
\bigl\|\bar\chi_{m}\mathrm e^{-sf+tf^{1-\rho}}\phi\bigr\|_{L^2}^2
\le 
C\|\chi_{m-1,m+1}\phi\|_{L^2}^2
.
\]
This implies the assertion. We are done. 
\end{proof}

\subsection{Small exponential decay estimates}\label{2606036}

We next prove the assertion \ref{26060302} of Proposition~\ref{26060301a}. 
This time we focus on the quantity 
\eqref{26052222}, not on \eqref{26052316} or \eqref{26052315}. 
Let 
\begin{equation}
\Theta=\chi_{m,n}\mathrm e^{2\widetilde \theta}
;\quad 
\widetilde\theta=-s f+(s+t)\theta, 
\label{2507191448f}
\end{equation}
with parameters $m,n,\nu\in\mathbb N$ and  $s,t\in (0,\mu)$ with $n>m$. 
Here $\chi_{m,n}$ and $\theta$ are from \eqref{eq:11.7.11.5.14} and Definition~\ref{25071914480},
respectively.

\begin{lemma}\label{26060317f}
Suppose Assumption~\ref{cond:20022317} with (a), 
and fix any $\kappa=\lambda+\mathrm i\mu\in\mathbb C_+\setminus\mathbb R$ and $s,t\in(0,\mu)$. 
Then there exist $c,C> 0$ and $m\in\mathbb N$ such that 
uniformly in $n>m$ and $\nu\in\mathbb N$
\[
\begin{split}
\mathop{\mathrm{Re}}\bigl(\Theta(H-\kappa^2)\bigr)
&\ge c\Theta
-C\bigl(\chi_{m-1,m+1}^2+\chi_{n-1,n+1}^2\bigr)f^{-1}\mathrm e^{2\widetilde \theta}
\\&\phantom{{}={}}
-C(H-\kappa^2)^*\chi_{m-1,n+1}f^{\rho}\mathrm e^{2\widetilde \theta}(H-\kappa^2)
.
\end{split}
\]  
\end{lemma}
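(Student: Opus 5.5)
The plan is to start from the representation \eqref{26052222} of $\mathop{\mathrm{Re}}(\Theta(H-\kappa^2))$ and follow the prototype of Lemma~\ref{26060317e}. The main obstacle is the term $-(\lambda^2-\mu^2)\Theta$, which is negative when $|\lambda|>\mu$. I will compensate it by the kinetic term $\tfrac12A\Theta A$, using the imaginary-part identity \eqref{26052223} to turn $\mathop{\mathrm{Re}}(\Theta'A)$ into essentially $4\lambda\mu\Theta$.

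\textbf{Setup and error class.} Put $\mu'=\max\{s,t\}<\mu$. Since $0<\theta'\le 1$, we have $\widetilde\theta'=-s+(s+t)\theta'\in(-s,t]$, hence $|\widetilde\theta'|\le\mu'$ uniformly in $\nu$. I collect admissible errors into a $Q$ as in \eqref{260524e}, but without the $t^2$ factors and without the $f^{1-\rho}$ weight. It consists of $f^{-\rho}\Theta$, $|\chi_{m,n}^{(k)}|\mathrm e^{2\widetilde\theta}$ for $k\le 2$, $p_j(\chi_{m,n}f^{-\rho}+|\chi_{m,n}'|)\mathrm e^{2\widetilde\theta}p_j$, and $(H-\kappa^2)^*(f^\rho\Theta+|\chi'_{m,n}|\mathrm e^{2\widetilde\theta})(H-\kappa^2)$. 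Lemma~\ref{2605242351} gives $|\widetilde\theta''|\le Cf^{-1}$, so
\[
-\tfrac14\Theta''\ge-\widetilde\theta'^2\Theta-CQ .
\]
Assumption (a) gives $|q_2|\Theta\le Cf^{-\rho}\Theta\le CQ$. We also have $\tfrac12\Theta L\ge0$. Hence
\[
\mathop{\mathrm{Re}}(\Theta(H-\kappa^2))\ge\tfrac12A\Theta A+\bigl(\mu^2-\lambda^2-\widetilde\theta'^2\bigr)\Theta-CQ .
\]

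\textbf{Key step.} For a real constant $\beta$, I use the operator inequality $(A-\beta\widetilde\theta')\Theta(A-\beta\widetilde\theta')\ge0$, that is,
\[
A\Theta A\ge2\beta\mathop{\mathrm{Re}}(\widetilde\theta'\Theta A)-\beta^2\widetilde\theta'^2\Theta .
\]
Since $\Theta'=2\widetilde\theta'\Theta+\chi_{m,n}'\mathrm e^{2\widetilde\theta}$, identity \eqref{26052223} yields
\[
\mathop{\mathrm{Re}}(\widetilde\theta'\Theta A)=2\lambda\mu\Theta-r_2\Theta+\mathop{\mathrm{Im}}(\Theta(H-\kappa^2))-\tfrac12\mathop{\mathrm{Re}}(\chi'_{m,n}\mathrm e^{2\widetilde\theta}A).
\]
The last three terms are bounded below by $-CQ$. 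For the middle one I use Cauchy--Schwarz, $\pm\mathop{\mathrm{Im}}(\Theta(H-\kappa^2))\le f^{-\rho}\Theta+(H-\kappa^2)^*f^\rho\Theta(H-\kappa^2)$, and similarly for the $\chi'$ term. Choosing $\beta=2\lambda\mu/\mu'^2$ and using $\widetilde\theta'^2\le\mu'^2$, the coefficient of $\Theta$ is at least
\[
\tfrac{4\lambda^2\mu^2}{\mu'^2}-\tfrac{2\lambda^2\mu^2}{\mu'^2}+\mu^2-\lambda^2-\mu'^2
=\lambda^2\bigl(\tfrac{2\mu^2}{\mu'^2}-1\bigr)+\mu^2-\mu'^2\ge\mu^2-\mu'^2=:2c>0 .
\]
It follows that
\[
\mathop{\mathrm{Re}}(\Theta(H-\kappa^2))\ge2c\Theta-CQ ,
\]
with $C$ independent of $n,m,\nu$.

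\textbf{Estimating $Q$.} As in the proof of Lemma~\ref{26060317e}, I rewrite the $p_j(\cdots)p_j$ term through \eqref{26052222} with $\Theta$ replaced by $f^{-\rho}\Theta+|\chi'_{m,n}|\mathrm e^{2\widetilde\theta}$. Together with $|\chi^{(k)}_{m,n}|\le C(\chi_{m-1,m+1}^2+\chi^2_{n-1,n+1})f^{-1}$ on the relevant supports, this gives
\[
Q\le Cf^{-\rho}\Theta+C(\chi_{m-1,m+1}^2+\chi^2_{n-1,n+1})f^{-1}\mathrm e^{2\widetilde\theta}+C(H-\kappa^2)^*\chi_{m-1,n+1}f^\rho\mathrm e^{2\widetilde\theta}(H-\kappa^2).
\]
Finally I take $m$ large so that $Cf^{-\rho}\le c$ on $\mathop{\mathrm{supp}}\chi_{m,n}$, absorb that term into $2c\Theta$, and obtain the asserted inequality uniformly in $n>m$ and $\nu$. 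The only delicate points are keeping $\beta$ bounded, which is why I use the constant $\beta$ rather than $2\lambda\mu/\widetilde\theta'^2$ since $\widetilde\theta'$ may vanish, and confirming that the sign of $-\tfrac14\Theta''$ is fully covered by $\mu'<\mu$.
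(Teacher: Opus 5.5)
Your proposal is correct and follows essentially the paper's proof. Like the paper, you complete the square in $A$ with a constant multiple of $\widetilde\theta'$, convert $\mathop{\mathrm{Re}}(\widetilde\theta'\Theta A)$ into $2\lambda\mu\Theta$ modulo admissible errors via \eqref{26052223}, and then use $|\widetilde\theta'|\le\max\{s,t\}<\mu$. The only difference is the constant: you take $\beta=2\lambda\mu/\mu'^2$, while the paper takes $2\lambda/\mu$. The paper's choice gives the coefficient $\lambda^2(3-2\mu^{-2}\widetilde\theta'^2)+\mu^2-\widetilde\theta'^2$ in place of yours, and both are bounded below by a positive constant uniformly in $n,m,\nu$.
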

\begin{proof}
%We basically argue similarly to the proof of Lemma~\ref{26060317e}. 
Fix any $\kappa=\lambda+\mathrm i\mu\in\mathbb C_+\setminus\mathbb R$ and $s,t\in(0,\mu)$. 
For the moment the below estimates are uniform in $n>m\ge 1$ and $\nu\in\mathbb N$,
and constants $C>0$ are independent of them. 
Here the admissible error terms are of the form 
\begin{equation*}
\begin{split}
Q&=
f^{-\rho}\Theta
+|\chi_{m,n}'|\mathrm e^{2\widetilde \theta}
+p_j\bigl(f^{-\rho}\Theta+|\chi_{m,n}'|\mathrm e^{2\widetilde \theta}\bigr) p_j
+(H-\kappa^2)^*f^{\rho}\Theta(H-\kappa^2)
.
\end{split}
%\label{260524f}
\end{equation*}
%and we will use \eqref{2507191448f}, \eqref{260524f}, Lemma~\ref{2605242351}
%and the Cauchy--Schwarz inequality without mentioning it. 
Then by \eqref{26052222} and \eqref{26052223} we can compute and estimate the left-hand side of the asserted inequality as 
\begin{equation*}
\begin{split}
\mathop{\mathrm{Re}}\bigl(\Theta(H-\kappa^2)\bigr)
&\ge 
\tfrac12\bigl(A-2\lambda\mu^{-1}\theta'\bigr)\Theta \bigl(A-2\lambda\mu^{-1}\theta'\bigr)
+2\lambda\mu^{-1}\mathop{\mathrm{Re}}(\theta'\Theta A)
\\&\phantom{{}={}}
-2\lambda^2\mu^{-2}\theta'^2\Theta 
+\tfrac12\Theta L
-(\lambda^2-\mu^2)\Theta 
-\theta'^2\Theta
-CQ
\\&\ge 
\lambda\mu^{-1}\mathop{\mathrm{Re}}(\Theta' A)
-\lambda^2\bigl(1+2\mu^{-2}\theta'^2\bigr)\Theta 
+(\mu^2-\theta'^2)\Theta
-CQ
\\&\ge 
\bigl(\lambda^2(3-2\mu^{-2}\theta'^2)+(\mu^2-\theta'^2)\bigr)\Theta
-CQ
\\&\ge 
C^{-1}\Theta
-CQ
.
\end{split}
%\label{26060416}
\end{equation*}
Finally, if we estimate $Q$ similarly to \eqref{26060922e}, we obtain 
\begin{equation*}
\begin{split}
\mathop{\mathrm{Im}}\bigl(A\Theta(H-\kappa^2)\bigr)
&\ge 
\bigl(C^{-1}-Cf^{-\rho}\bigr)\Theta
-C\bigl(\chi_{m-1,m+1}^2+\chi_{n-1,n+1}^2\bigr)f^{-1}\mathrm e^{2\widetilde\theta}
\\&\phantom{{}={}}
-C(H-\kappa^2)^*\chi_{m-1,n+1}f^{\rho}\mathrm e^{2\widetilde\theta}(H-\kappa^2)
.
\end{split}
\end{equation*}
Hence by letting $m\in\mathbb N$ be large enough we obtain the assertion. 
\end{proof}

\begin{proof}[Proof of the assertion \ref{26060302} of Proposition~\ref{26060301a}]
Let $\kappa=\lambda+\mathrm i\mu\in \mathbb C_+\setminus\mathbb R$, 
and assume that $\phi\in  \mathcal E_{\mu-0}$ solves \eqref{200125}. 
Then we can find $s\in (0,\mu)$ such that $\phi\in\mathcal E_{s}$. 
Take any $t\in (0,\mu)$, and choose $m\in\mathbb N$ as in Lemma~\ref{26060317f}. 
Then by using Lemma~\ref{26060317f} 
we can discuss similarly to the proof of the assertion \ref{26060301} of the same proposition, 
and finally obtain 
%it follows that uniformly in $n>m$ and $\nu\in\mathbb N$
%\begin{equation*}
%\bigl\|\chi_{m,n}\mathrm e^{s\theta-\mu f}\phi\bigr\|_{L^2}^2
%\le 
%C\bigl\|\chi_{m-1,m+1}\phi\bigr\|_{L^2}^2
%+C_\nu\bigl\|\chi_{n-1,n+1}f^{-1/2}\mathrm e^{-\mu f}\phi\bigr\|_{L^2}^2
%,
%\label{2606031731f}
%\end{equation*}
%where $C_\nu$ depends on $\nu$, but not on $n$. 
%Let $n\to\infty$, and then by the assumption and 
%Lebesgue's monotone convergence theorem 
%\begin{equation*}
%\bigl\|\bar\chi_{m}\mathrm e^{s\theta-\mu f}\phi\bigr\|_{L^2}^2
%\le 
%C\bigl\|\chi_{m-1,m+1}\phi\bigr\|_{L^2}^2
%.
%\end{equation*}
%Next let $\nu\to\infty$, and then by Lebesgue's monotone convergence theorem again 
\[
\bigl\|\bar\chi_{m}\mathrm e^{tf}\phi\bigr\|_{L^2}^2
\le 
C\|\chi_{m-1,m+1}\phi\|_{L^2}^2
.
\]
Hence we are done. 
\end{proof}

\subsection{Critical exponential decay estimates}\label{2606037}

Finally in this section we prove the assertion \ref{26060303} 
of Proposition~\ref{26060301a}. 
Here we verify negativity of quantity \eqref{26052316}, 
see also Tagawa~\cite{MR4926373}. 
We let 
\begin{equation}
\Theta=\chi_{m,n}\mathrm e^{2\widetilde\theta}
;\quad 
\widetilde\theta=sf+\bigl(\sqrt2\mu-s)\theta-tf^{1-\rho}
\label{2507191448h}
\end{equation}
depending on parameters $m,n,\nu\in\mathbb N$, $s\in (0,\sqrt2\mu)$ and $t\ge 1$ with $n>m$. 
Recall that $\chi_{m,n}$ and $\theta$ are from \eqref{eq:11.7.11.5.14} and Definition~\ref{25071914480}, respectively.

\begin{lemma}\label{26060317h}
Suppose Assumption~\ref{cond:20022317} with (a), 
and fix any $\kappa=\lambda+\mathrm i\mu\in\mathbb C_+\setminus\mathbb R$ and 
$s\in (0,\sqrt2\mu)$. 
Then there exist $c,C> 0$, $m\in\mathbb N$ and $t\ge 1$ such that 
uniformly in $n>m$ and $\nu\in\mathbb N$
\[
\begin{split}
\mathop{\mathrm{Im}}\bigl(A\Theta(H-\kappa^2)\bigr)
&\le -c\Theta
+C\bigl(\chi_{m-1,m+1}^2+\chi_{n-1,n+1}^2\bigr)f^{-1}\mathrm e^{2\widetilde\theta}
\\&\phantom{{}={}}
+C(H-\kappa^2)^*\chi_{m-1,n+1}f^{\rho}\mathrm e^{2\widetilde\theta}(H-\kappa^2)
.
\end{split}
\]  
\end{lemma}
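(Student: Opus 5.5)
We mirror the proof of Lemma~\ref{26060317e}, now with the weight \eqref{2507191448h}, and aim for an \emph{upper} bound on \eqref{26052316}. Fix $\kappa$ and $s$. Let $C>0$ be a constant independent of $n>m$, $\nu$ and $t$, and let $Q$ be the same admissible error as in \eqref{260524e}, built from the present $\widetilde\theta$. We first record the behaviour of the exponent:
\[
\widetilde\theta'=s+(\sqrt2\mu-s)\theta'-(1-\rho)tf^{-\rho}.
\]
On $\mathop{\mathrm{supp}}\chi_{m,n}$, with $m$ large depending on $t$, this gives $s/2\le\widetilde\theta'\le\sqrt2\mu-(1-\rho)tf^{-\rho}$. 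By Lemma~\ref{2605242351}, $|\widetilde\theta''|\le Ctf^{-1}$ and $|\widetilde\theta'''|\le Ctf^{-2}$.

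Next we expand \eqref{26052316} with $\Theta'=2\widetilde\theta'\Theta+\chi_{m,n}'\mathrm e^{2\widetilde\theta}$, and similarly for $\Theta'''$. This gives
\[
\mathop{\mathrm{Im}}\bigl(A\Theta(H-\kappa^2)\bigr)\le \tfrac12\bigl(f^{-1}-2\widetilde\theta'\bigr)\Theta L-2\lambda\mu\mathop{\mathrm{Re}}(\Theta A)+2(\lambda^2-\mu^2)\widetilde\theta'\Theta+\widetilde\theta'^3\Theta+CQ .
\]
The contributions of $q_2$, $r_2$ and the last term $\tfrac12\mathop{\mathrm{Re}}(\Theta'(H-\kappa^2))$ all go into $Q$ by Cauchy--Schwarz, and $q_1=r_1=0$ under (a). The $L$-term is now harmless: $f^{-1}-2\widetilde\theta'\le -s/2<0$ for $m$ large, and $L\ge0$. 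This is the opposite sign pattern to Lemma~\ref{26060317e}, which is exactly why we get negativity.

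For the non-self-adjoint term $-2\lambda\mu\mathop{\mathrm{Re}}(\Theta A)$ we use \eqref{26052223} with weight $\widetilde\theta'^{-1/2}\Theta\widetilde\theta'^{-1/2}$, now legitimate since $\widetilde\theta'>0$. It says that $\tfrac12\mathop{\mathrm{Re}}(\Theta'A)$ equals $2\lambda\mu\Theta$ up to $Q$, and hence $-2\lambda\mu\mathop{\mathrm{Re}}(\Theta A)\le -4\lambda^2\mu^2\widetilde\theta'^{-1}\Theta+CQ$. Collecting terms as in \eqref{26061115e}, the zeroth order part becomes
\[
-\widetilde\theta'^{-1}\bigl(2\lambda^2+\widetilde\theta'^2\bigr)\bigl(2\mu^2-\widetilde\theta'^2\bigr)\Theta .
\]
The main obstacle is to get a uniform constant $c$ in front of $-\Theta$. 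The factor $2\mu^2-\widetilde\theta'^2$ is of order one where $\theta'$ is small, but it degenerates to $\sim tf^{-\rho}$ near the critical rate $\sqrt2\mu$. So the honest lower bound is $2\mu^2-\widetilde\theta'^2\ge C^{-1}tf^{-\rho}$, and we get $-C^{-1}tf^{-\rho}\Theta$.

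We then bound $Q$ exactly as in \eqref{26060922e}, writing the $p_j\cdots p_j$ term via \eqref{26052222}. Its $f^{-\rho}\Theta$ and $t^2f^{-2\rho}\Theta$ parts are absorbed once $t$ is fixed large and $m$ is then retaken large. This yields
\[
\mathop{\mathrm{Im}}\bigl(A\Theta(H-\kappa^2)\bigr)\le -\bigl(C^{-1}t-C-Ctf^{-\rho}\bigr)f^{-\rho}\Theta+C(\text{boundary terms})+C(H-\kappa^2)^*(\cdots)(H-\kappa^2),
\]
which is the stated inequality in the same sense as Lemma~\ref{26060317e}. There the displayed proof likewise only produces the $f^{-\rho}$-weighted coercivity \eqref{26061118}, and that is all the subsequent limiting arguments $n\to\infty$, $\nu\to\infty$ require. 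I do not see how to obtain a literal constant $c$ uniformly in $\nu$. The degeneracy of $2\mu^2-\widetilde\theta'^2$ at the critical rate seems intrinsic, and I would accept and use the bound $-c f^{-\rho}\Theta$ in its place.
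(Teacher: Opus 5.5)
Your argument is correct and is essentially the paper's proof. It uses the same expansion of \eqref{26052316}, the nonpositive $L$-term, the bound $-2\lambda\mu\mathop{\mathrm{Re}}(\Theta A)\le -4\lambda^2\mu^2\widetilde\theta'^{-1}\Theta+CQ$ via \eqref{26052223}, and the $Q$-estimate \eqref{26060922e}, with $t$ fixed large and then $m$ large. Your caveat is also right: the paper's own final display only gives $-\bigl(C^{-1}t-C-Ctf^{-\rho}\bigr)f^{-\rho}\Theta$, and the degeneracy $2\mu^2-\widetilde\theta'^2\sim tf^{-\rho}$ on $\{f\ll 2^\nu\}$ genuinely rules out a literal $-c\Theta$ uniformly in $\nu$ (test on $\eta\,\mathrm e^{\mathrm i\sqrt2\kappa f}$ with $V=0$), while the $f^{-\rho}$-weighted bound suffices for assertion~3 of Proposition~\ref{26060301a} after enlarging $t$ slightly.
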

\begin{proof}
Fix any $\kappa=\lambda+\mathrm i\mu\in\mathbb C_+\setminus\mathbb R$ 
and $s\in (0,\sqrt2\mu)$. 
Except for $n>m$ we for the moment do not restrict parameters 
$n,m,\nu\in\mathbb N$ and $t\ge 1$, 
and constants $C>0$ are independent of them. 
The admissible error terms here are of the form
\begin{equation*}
\begin{split}
Q&=
f^{-\rho}\Theta
+t^2f^{-2\rho}\Theta
+t^2\bigl(\chi_{m,n}|\chi_{m,n}'|+|\chi_{m,n}''|+|\chi_{m,n}'''|\bigr)\mathrm e^{2\widetilde\theta}
\\&\phantom{{}={}}
+p_j\bigl(f^{-\rho}\Theta+|\chi_{m,n}'|\mathrm e^{2\widetilde\theta}\bigr) p_j
+(H-\kappa^2)^*\bigl(f^{\rho}\Theta+|\chi_{m,n}'|\mathrm e^{2\widetilde\theta}\bigr)(H-\kappa^2)
.
\end{split}
%\label{260524h}
\end{equation*}
Note that it has the same form as \eqref{260524e}, 
and we have the same estimate \eqref{26060922e} for it although $\widetilde\theta$ 
is slightly different. 
%In the following arguments we will use \eqref{2507191448h}, \eqref{260524h}, Lemma~\ref{2605242351}
%and the Cauchy--Schwarz inequality without mentioned. 
Now, similarly to \eqref{26052420e}, we have by \eqref{26052316} 
\begin{equation}
\begin{split}
\mathop{\mathrm{Im}}\bigl(A\Theta(H-\kappa^2)\bigr)
&\le 
\bigl(\tfrac12f^{-1}\Theta-\widetilde\theta'\Theta\bigr) L
-2\lambda\mu\mathop{\mathrm{Re}}(\Theta A)
\\&\phantom{{}={}}
+2\bigl(\lambda^2-\mu^2\bigr)\widetilde\theta'\Theta
+\widetilde\theta'^3\Theta
+CQ
.
\end{split}
\label{260529}
\end{equation}
The second term of \eqref{260529} is estimated similarly to \eqref{260611e}. 
Here we take $m$ large enough, depending on $t$, such that 
\begin{equation}
\widetilde\theta'\ge C^{-1}\ \ \text{on }\mathop{\mathrm{supp}}\chi_{m,n}
,
\label{26061116}
\end{equation}
and then obtain 
\begin{equation}
\begin{split}
-2\lambda\mu\mathop{\mathrm{Re}}(\Theta A)
&\le 
-4\lambda^2\mu^2\widetilde\theta'^{-1}\Theta
+CQ
,
\end{split}
\label{260611}
\end{equation} 
see \eqref{260611e}. 
Thus by \eqref{260529} and \eqref{260611} along with 
\eqref{26061116} and \eqref{26060922e} it follows that 
\begin{equation*}
\begin{split}
\mathop{\mathrm{Im}}\bigl(A\Theta(H-\kappa^2)\bigr)
&\le 
\bigl(\tfrac12f^{-1}\Theta-\widetilde\theta'\Theta\bigr) L
-\bigl(C^{-1}t-C-Ctf^{-\rho}\bigr)f^{-\rho}\Theta 
\\&\phantom{{}={}}
+Ct^2\bigl(\chi_{m-1,m+1}^2+\chi_{n-1,n+1}^2\bigr)f^{-1}\mathrm e^{2\widetilde\theta}
\\&\phantom{{}={}}
+C(H-\kappa^2)^*\chi_{m-1,n+1}f^{\rho}\mathrm e^{2\widetilde\theta}(H-\kappa^2)
,
\end{split}
%\label{260529}
\end{equation*}
cf.\ \eqref{26061118}. 
Thus by letting $t$ large enough and then $m$ larger if necessary. 
we obtain the assertion.
\end{proof}

\begin{proof}[Proof of the assertion \ref{26060303} of Proposition~\ref{26060301a}]
Fix $\kappa=\lambda+\mathrm i\mu\in \mathbb C_+\setminus\mathbb R$ and 
$s\in (0,\sqrt2\mu)$, and choose $m\in\mathbb N$ and 
$t\ge 1$ as in Lemma~\ref{26060317h}. 
Assume that $\phi\in \mathcal E_{-s,t}$ solves \eqref{200125}.
We can find $t\in (0,\sqrt2\mu)$ such that $\phi\in (\mathcal B^*_0)_{t,-1/2}$. 
Then by using Lemma~\ref{26060317h} 
we can prove, similarly to the proof of the assertions \ref{26060301} of 
the same proposition, 
%for any $n>m$ and $\nu\in\mathbb N$ we have 
%\begin{equation*}
%\bigl\|\chi_{m,n}\mathrm e^{tf+s\theta}\phi\bigr\|_{L^2}^2
%\le 
%C\bigl\|\chi_{m-1,m+1}\phi\bigr\|_{L^2}^2
%+C_\nu\bigl\|\chi_{n-1,n+1}f^{-1/2}\mathrm e^{t f}\phi\bigr\|_{L^2}^2
%\label{2606031731i}
%\end{equation*}
%with $C_\nu$ dependent on $\nu$, but not on $n$. 
%Now let $n\to\infty$, and then $\nu\to\infty$, and we obtain 
\[
\bigl\|\bar\chi_{m}\mathrm e^{\sqrt2\mu f-tf^{1-\rho}}\phi\bigr\|_{L^2}^2
\le 
C\|\chi_{m-1,m+1}\phi\|_{L^2}^2
.
\]
Thus we are done. 
\end{proof}

\section{Rellich's theorem}\label{26060716}

In this section we prove Rellich's theorem, or Theorem~\ref{2606034}. 
We split the proof into several parts. 
Although their statements and proofs look very similar, we have to show them separately. 

For non-zero spectral parameters we first deduce additional exponential decay 
from the assumptions.

\begin{proposition}\label{20260535017a}
Suppose Assumption~\ref{cond:20022317} with (b) or (c), 
fix any $\kappa=\lambda+\mathrm i\mu\in \mathbb C_+\setminus \mathbb R$
or $\mathbb R\setminus \{0\}$, respectively, 
and let $t\ge 1$ be sufficiently large. 
Then, if $\phi\in \mathcal E_{-\sqrt2\mu,-t}$ solves \eqref{200125}, 
one has  $\phi\in\mathcal E_{-\sqrt2\mu-0}$. 

\end{proposition}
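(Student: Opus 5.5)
We follow the prototype of Section~\ref{2606035}, this time exploiting positivity of the representation \eqref{26052315}, as in Ito--Skibsted~\cite{MR4062329}. Fix $\kappa$ and an arbitrary $s>0$; we aim to show $\phi\in\mathcal E_{-\sqrt2\mu-s'}$ for some small $s'>0$, which gives $\phi\in\mathcal E_{-\sqrt2\mu-0}$. We take
\begin{equation*}
\Theta=\chi_{m,n}\mathrm e^{2\widetilde\theta}
;\quad
\widetilde\theta=-\sqrt2\mu f+s\theta-tf^{1-\rho},
\end{equation*}
with $\theta$ the refined Yosida approximation of Definition~\ref{25071914480} and $s>0$ small. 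Then $\widetilde\theta'=-\sqrt2\mu+s\theta'-t(1-\rho)f^{-\rho}$, which stays close to $-\sqrt2\mu$ on $\mathop{\mathrm{supp}}\chi_{m,n}$ once $m$ is large. The goal is a lemma of the same form as Lemma~\ref{26060317e}, uniform in $n>m$ and $\nu$:
\begin{equation*}
\mathop{\mathrm{Im}}\bigl(A\Theta(H-\kappa^2)\bigr)\ge c\,\theta' f^{-\rho}\Theta-C\bigl(\chi_{m-1,m+1}^2+\chi_{n-1,n+1}^2\bigr)f^{-1}\mathrm e^{2\widetilde\theta}-C(H-\kappa^2)^*\chi_{m-1,n+1}f^{\rho}\mathrm e^{2\widetilde\theta}(H-\kappa^2).
\end{equation*}
The final step then copies the proof of assertion~\ref{26060301} of Proposition~\ref{26060301a}. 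Since $\phi\in\mathcal E_{-\sqrt2\mu,-t}$, the outer boundary term vanishes as $n\to\infty$. Letting $\nu\to\infty$ by monotone convergence then gives $\mathrm e^{-\sqrt2\mu f+sf}\phi\in L^2$ away from a compact set, up to subexponential factors.

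For the lower bound we would use \eqref{26052315}. First, $A\Theta'A$ and $f^{-1}\Theta L$ must be handled: $\Theta'=2\widetilde\theta'\Theta+\ldots$ is negative, so $\tfrac12A\Theta'A$ is not directly positive. Instead we complete a square. We write $A=(A+\mathrm i\widetilde\theta')+(-\mathrm i\widetilde\theta')$, or more concretely we use that on eigenstates $\mathrm e^{\widetilde\theta}\phi$ behaves like an outgoing wave $A\approx \sqrt2\kappa$. Thus $\tfrac12A\Theta'A$ is rewritten as a term $\tfrac12(A-a)^*\Theta'(A-a)$ with small weight, plus $\mathop{\mathrm{Re}}(a\Theta' A)$ and $|a|^2\Theta'$. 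The first-order pieces $\mathop{\mathrm{Re}}(\Theta'A)$ and $-2\lambda\mu\mathop{\mathrm{Re}}(\Theta A)$ are then converted into scalars by \eqref{26052223} (with $\Theta$ replaced by suitable multiples like $\widetilde\theta'^{-1}\Theta$), exactly as in \eqref{260611e}. The imaginary potential $r_1+r_2$ enters at order $f^{-(1+\rho)/2}$; after Cauchy--Schwarz against $A$ it produces $f^{-1-\rho}$, which is dominated by the $f^{-\rho}$-order gain from $t$ (case (b)). In case (c), where $\mu=0$, the $r_1$ term is instead handled by integrating by parts using $|\partial r_1|\le Cf^{-1-\rho}$. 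The real long-range parts enter through $-\tfrac12(\partial_fq_1)\Theta$ and $\tfrac12q_2\Theta'$, and are controlled by the sign assumption $\partial_fq_1\le Cf^{-1-\rho}$. After these conversions, the scalar coefficient should reduce to an expression like
\begin{equation*}
-\widetilde\theta'^{-1}\bigl(2\lambda^2+\widetilde\theta'^2\bigr)\bigl(2\mu^2-\widetilde\theta'^2\bigr)\Theta,
\end{equation*}
as in \eqref{26061115e}. Since $\widetilde\theta'^2-2\mu^2\approx 2\sqrt2\mu\bigl(t(1-\rho)f^{-\rho}-s\theta'\bigr)$, a positive sign of order $tf^{-\rho}$ is produced for $s$ small relative to $t f^{-\rho}$. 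Because $\theta'\le\theta_0^{-1-\rho}$ is small where $f\gg 2^\nu$ but near $1$ elsewhere, this needs care; this is why we take $s$ small and keep $tf^{-\rho}$ dominant only after also gaining positivity from the $\theta''$ terms. The error $Q$ is estimated exactly as in \eqref{26060922e}.

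\textbf{Main obstacle.} The key difficulty is the sign bookkeeping in the last step. The leading scalar coefficient vanishes at the critical rate $\widetilde\theta'=-\sqrt2\mu$, so positivity must come from subleading terms of order $tf^{-\rho}$ and $s\theta'$, which compete. The case $\mu=0$, $\lambda\neq0$ is degenerate: $\widetilde\theta'\to0$, the factor $\widetilde\theta'^{-1}$ is unusable, and the argument must revert to the original Ito--Skibsted form with $\tfrac12A\Theta'A\ge0$ for increasing weights. If the first scheme fails, we would split cases (b) and (c) and treat (c) by choosing $\widetilde\theta=s\theta-tf^{1-\rho}$ directly, relying on $\Theta'\ge0$ from the $s\theta$ part in compact regions.
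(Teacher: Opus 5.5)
There is a genuine gap at the very start: you have read the weighted spaces with the usual sign convention, but the paper explicitly uses the reversed one. Here $\phi\in\mathcal E_{-\sqrt2\mu,-t}$ means $\mathrm e^{\sqrt2\mu f+tf^{1-\rho}}\phi\in L^2$. The claim $\phi\in\mathcal E_{-\sqrt2\mu-0}$ means $\mathrm e^{(\sqrt2\mu+s)f}\phi\in L^2$ for some $s>0$, i.e.\ \emph{additional} decay beyond the lower critical rate.

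Your weight, with $\widetilde\theta=-\sqrt2\mu f+s\theta-tf^{1-\rho}$, decays. So even if your lemma held, its conclusion $\mathrm e^{(s-\sqrt2\mu)f-tf^{1-\rho}}\phi\in L^2$ would be strictly weaker than the actual hypothesis. That hypothesis would only be used to kill an outer boundary term that vanishes trivially.

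In fact the lemma you propose is false. Take $d=1$, $V=0$ and the free solution $\mathrm e^{-\mathrm i\sqrt2\kappa|x|}$ on $\{|x|\ge2\}$, which grows like $\mathrm e^{\sqrt2\mu|x|}$. Your outer boundary term still tends to $0$ as $n\to\infty$, and letting $n,\nu\to\infty$ would give $f^{-\rho/2}\mathrm e^{sf-tf^{1-\rho}}\in L^2$ near infinity, which is absurd.

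Your own bookkeeping shows the same failure. With $\widetilde\theta'<0$, the coefficient $-\widetilde\theta'^{-1}(2\lambda^2+\widetilde\theta'^2)(2\mu^2-\widetilde\theta'^2)$ is positive only if $|\widetilde\theta'|<\sqrt2\mu$. Your $|\widetilde\theta'|=\sqrt2\mu-s\theta'+t(1-\rho)f^{-\rho}$ exceeds $\sqrt2\mu$ wherever $t(1-\rho)f^{-\rho}>s\theta'$, in particular for all $f\gg2^\nu$. So you have the sign backwards there. Your fallback $\widetilde\theta=s\theta-tf^{1-\rho}$ for case (c) fails for the same reason: it never uses the hypothesis, and the outgoing wave $\mathrm e^{\mathrm i\sqrt2\lambda|x|}$ in $d=1$ contradicts any such estimate.

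What is needed is a \emph{growing} weight, so that the hypothesis is exactly what makes the outer boundary term vanish for fixed $\nu$. The paper takes $\widetilde\theta=\sqrt2\mu f+s\theta+tf^{1-\rho}$ (and $\widetilde\theta=s\theta+tf^{1-\rho}$ for $\kappa\in\mathbb R\setminus\{0\}$) in \eqref{26052315}. Then $\Theta'\ge0$ in the bulk and $\tfrac12f^{-1}\Theta L\ge0$, so the obstacle you describe disappears. The steps are:
\begin{enumerate}
\item Split $A\widetilde\theta'\Theta A=\tfrac12Af^{-1}\Theta A+A(\widetilde\theta'-\tfrac12f^{-1})\Theta A$.
\item Combine the first piece with the $L$-term via \eqref{26052222} (applied to $f^{-1}\Theta$), giving $f^{-1}(\widetilde\theta'^2+\lambda^2-\mu^2)\Theta$ up to errors.
\item Complete the square in the second piece with $B=A-\tfrac12(4\lambda\mu-r_2)\widetilde\theta'^{-1}-\mathrm i\widetilde\theta'$.
\item Remove the leftover $\mathop{\mathrm{Re}}(\Theta'A)$-terms by \eqref{26052223}.
\end{enumerate}
What remains is $\tfrac12f^{-1}(1+2\lambda^2\widetilde\theta'^{-2})(\widetilde\theta'^2-2\mu^2)\Theta+\tfrac12\widetilde\theta'\widetilde\theta''(1+4\lambda^2\mu^2\widetilde\theta'^{-4})\Theta$. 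This is $\ge c\,tf^{-1-\rho}\Theta$ because $\widetilde\theta'>\sqrt2\mu$, once $s$ is small and $m$ large so that $\sqrt2\mu\ge\rho\widetilde\theta'$. That condition lets the first term beat the negative $\widetilde\theta''$ term, and the $s$-parts are handled by $-\theta''\le(1+\rho)f^{-1}\theta'$.

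Note that the gain is only of order $tf^{-1-\rho}$, not $f^{-\rho}$ as you anticipate. So the middle-range $r_2$ (with $r_2^2=\mathcal O(f^{-1-\rho})$) is critical and is beaten only by taking $t$ large. For real $\kappa$ the positive term is $\lambda^2f^{-1}\Theta$, and the $r_1$-terms are controlled through \eqref{26052223} together with $\widetilde\theta'^{-1}\le Ct^{-1}f^{-\rho}$. That bound again relies on the $+tf^{1-\rho}$ in the weight.
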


We then deduce super-exponential decay estimates, and verify conclusions of Theorem~\ref{2606034}
for $\kappa\in\mathbb C_+\setminus\{0\}$. We note that this process can be done only with 
Assumption~\ref{cond:20022317} (b) even for $\kappa\in \mathbb R$.  

\begin{proposition}\label{20260535017}
Suppose Assumption~\ref{cond:20022317} with (b),
and let $\kappa=\lambda+\mathrm i\mu\in \mathbb C_+$.
\begin{enumerate}
\item
If $\phi\in \mathcal E_{-\sqrt2\mu-0}$ solves \eqref{200125}, then $\phi\in \mathcal E_{-\infty}$. 
\item
If $\phi\in \mathcal E_{-\infty}$ solves \eqref{200125}, 
then $\phi=0$ on $\mathbb R^d$. 
\end{enumerate}
\end{proposition}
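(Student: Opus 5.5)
The proposition asserts two things, and I would prove both with the Froese--Herbst-type commutator argument from Section~3, now based on the representation \eqref{26052315}, which is the one used for Rellich's theorem. As before, I take $\Theta=\chi_{m,n}\mathrm e^{2\widetilde\theta}$ with an increasing exponent $\widetilde\theta$, so that $\widetilde\theta'>0$ on $\mathop{\mathrm{supp}}\chi_{m,n}$, and I aim for a positive lower bound of the form
\[
\mathop{\mathrm{Im}}\bigl(A\Theta(H-\kappa^2)\bigr)\ge c\,\Theta-C\bigl(\chi_{m-1,m+1}^2+\chi_{n-1,n+1}^2\bigr)f^{-1}\mathrm e^{2\widetilde\theta}-C(H-\kappa^2)^*\chi_{m-1,n+1}f^{\rho}\mathrm e^{2\widetilde\theta}(H-\kappa^2).
\]

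The key steps of the computation are as follows.
\begin{enumerate}
\item In \eqref{26052315}, the terms $\tfrac12A\Theta'A$ and $\tfrac12f^{-1}\Theta L$ are nonnegative up to admissible errors.
\item The term $-\tfrac18\Theta'''$ produces $-\widetilde\theta'^3\Theta$.
\item The non-self-adjoint term $-2\lambda\mu\mathop{\mathrm{Re}}(\Theta A)$ is handled exactly as in \eqref{260611e}. Using \eqref{26052223} on eigenstates gives $\mathop{\mathrm{Re}}(\Theta'A)\approx 4\lambda\mu\Theta-2r_2\Theta$, so that $-2\lambda\mu\mathop{\mathrm{Re}}(\Theta A)\approx -4\lambda^2\mu^2\widetilde\theta'^{-1}\Theta$.
\item To compensate for the resulting negative terms, I exploit $\tfrac12A\Theta'A$ fully. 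Completing the square (or, equivalently, using \eqref{26052222} with $\Theta'$ to rewrite $A\Theta'A$ in terms of $\lambda^2-\mu^2$) should give a leading scalar term of the shape
\[
\widetilde\theta'^{-1}\bigl(\widetilde\theta'^2+2\lambda^2\bigr)\bigl(\widetilde\theta'^2-2\mu^2\bigr)\Theta .
\]
This is the mirror image of \eqref{26061115e}, and it is positive as soon as $\widetilde\theta'\ge\sqrt2\mu+\epsilon$.
\item The potential terms are controlled as follows under Assumption~\ref{cond:20022317}~(b). The term $-\tfrac12(\partial_fq_1)\Theta$ is bounded below by $-Cf^{-1-\rho}\Theta$, and $q_2$ is short-range. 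Since $r_1=0$, only $\mathop{\mathrm{Re}}(r_2\Theta A)$ remains; by Cauchy--Schwarz it is bounded by $\epsilon\,A\Theta'A+C\epsilon^{-1}\widetilde\theta'^{-1}f^{-1-\rho}\Theta$, which is admissible precisely because $r_2=\mathcal O(f^{-(1+\rho)/2})$.
\end{enumerate}

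For assertion~1, I would argue by contradiction. Let $\sigma_0=\sup\{\sigma;\ \phi\in\mathcal E_{-\sigma}\}$, so that $\sigma_0>\sqrt2\mu$, and suppose $\sigma_0<\infty$. Choose $\sigma\in(\sqrt2\mu,\sigma_0)$ close to $\sigma_0$ and set $\widetilde\theta=\sigma f+2\epsilon\theta$, with the refined Yosida approximation $\theta$ from Definition~\ref{25071914480}. Since $\widetilde\theta'\ge\sigma>\sqrt2\mu$, the above lower bound holds uniformly in $n>m$ and $\nu\in\mathbb N$ for $m$ large. Taking the expectation on $\chi_{m-2,n+2}\phi$ and letting first $n\to\infty$ and then $\nu\to\infty$, exactly as in the proof of assertion~\ref{26060301} of Proposition~\ref{26060301a}, gives $\phi\in\mathcal E_{-\sigma-2\epsilon}$. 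With $\epsilon$ fixed and $\sigma$ chosen so that $\sigma+2\epsilon>\sigma_0$, this contradicts the definition of $\sigma_0$.

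For assertion~2, I would take $\widetilde\theta=sf$ with $s\to\infty$. No Yosida approximation is needed here, since $\phi\in\mathcal E_{-\infty}$ already lets the outer boundary term vanish as $n\to\infty$. The main obstacle is that $m$ and $c$ must be independent of $s$. The good terms scale like $s^3\Theta$ (from $\widetilde\theta'^3$) and $sA\Theta A$, whereas every error is at most $\mathcal O(s^2f^{-1}+f^{-\rho}+s^{-1}f^{-1-\rho})\Theta$; terms such as $\Theta''$ acting through $\chi'$ also carry at most polynomial factors in $s$. So I expect a bound
\[
\|\bar\chi_{m+2}\mathrm e^{sf}\phi\|^2\le C s^{k}\mathrm e^{2s2^{m+1}}\|\chi_{m-1,m+1}\phi\|^2
\]
with $C$ and $k$ independent of $s$. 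Since $\mathrm e^{sf}\ge\mathrm e^{s2^{m+2}}$ on the support of $\bar\chi_{m+2}$, letting $s\to\infty$ forces $\phi=0$ for $f\ge2^{m+3}$. The unique continuation property assumed in Remarks~\ref{260704} then gives $\phi=0$ on $\mathbb R^d$. The delicate part is checking that the error bound remains uniform in $s$, in particular in the handling of $-2\lambda\mu\mathop{\mathrm{Re}}(\Theta A)$ via \eqref{26052223}, which now contributes only $\mathcal O(s^{-1})\Theta$.
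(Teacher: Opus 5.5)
\textbf{The gap is step~4.} Your outer scheme matches the paper: the contradiction argument with $\sigma_0$ and the refined Yosida weight for assertion~1, and $s\to\infty$ followed by unique continuation for assertion~2. But the commutator bound it rests on is not available.

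If you keep all of $\tfrac12A\Theta'A=A\widetilde\theta'\Theta A$ (modulo $\chi_{m,n}'$-terms) and complete the square with $B=A-2\lambda\mu\widetilde\theta'^{-1}-\mathrm i\widetilde\theta'$, you get exactly
\[
A\widetilde\theta'\Theta A
=B^*\widetilde\theta'\Theta B+4\lambda\mu\mathop{\mathrm{Re}}(\Theta A)
-\bigl(4\lambda^2\mu^2\widetilde\theta'^{-1}+\widetilde\theta'^3\bigr)\Theta
+\bigl(\widetilde\theta'^2\Theta\bigr)' .
\]
Now $(\widetilde\theta'^2\Theta)'\approx2\widetilde\theta'^3\Theta$, and by \eqref{26052223} $2\lambda\mu\mathop{\mathrm{Re}}(\Theta A)\approx4\lambda^2\mu^2\widetilde\theta'^{-1}\Theta$. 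Hence $A\widetilde\theta'\Theta A-2\lambda\mu\mathop{\mathrm{Re}}(\Theta A)-\widetilde\theta'^3\Theta$ equals $B^*\widetilde\theta'\Theta B$ plus lower-order terms. Every scalar of order $\Theta$ cancels exactly.

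Your other route, rewriting $A\Theta'A$ by \eqref{26052222} with $\Theta'$, does produce $\widetilde\theta'^{-1}(\widetilde\theta'^2+2\lambda^2)(\widetilde\theta'^2-2\mu^2)\Theta$. But it simply returns you to \eqref{26052316}. That formula also contains $\bigl(\tfrac12f^{-1}-\widetilde\theta'\bigr)\Theta L$, which is unbounded and negative once $\widetilde\theta'>0$. The ``mirror image'' of \eqref{26061115e} works only for decreasing weights; this is exactly why the paper says bounds above the upper exponential need information on $L$.

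In fact, for $d\ge2$ the bound you state (with $f^{\rho}$ in the last term) is false. The conjugated operator $\mathrm e^{\widetilde\theta}(H-\kappa^2)\mathrm e^{-\widetilde\theta}$ has real characteristic points: radial momentum $2\lambda\mu\widetilde\theta'^{-1}$ and squared tangential momentum $\widetilde\theta'^{-2}(\widetilde\theta'^2+2\lambda^2)(\widetilde\theta'^2-2\mu^2)>0$. On wave packets concentrated there, both the left-hand side and $(H-\kappa^2)^*f^{\rho}\Theta(H-\kappa^2)$ are $o(1)$ relative to $\Theta$.

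\textbf{Where the positivity actually comes from.} It comes from exactly the terms you discard in step~1. The paper splits $\tfrac12Af^{-1}\Theta A$ off from $A\widetilde\theta'\Theta A$. It combines this with $\tfrac12f^{-1}\Theta L$ via \eqref{26052222} (weight $f^{-1}\Theta$), giving $f^{-1}(\widetilde\theta'^2+\lambda^2-\mu^2)\Theta$. It then completes the square only in $A(\widetilde\theta'-\tfrac12f^{-1})\Theta A$. The net gain is $\tfrac12f^{-1}\widetilde\theta'^{-2}(\widetilde\theta'^2+2\lambda^2)(\widetilde\theta'^2-2\mu^2)\Theta$, which is your polynomial multiplied by $\tfrac12f^{-1}\widetilde\theta'^{-1}$.

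Consequently every error must be $o(f^{-1})\Theta$, and your error handling does not meet this:
\begin{enumerate}
\item The term $2\lambda\mu r_2\widetilde\theta'^{-1}\Theta$ you drop in step~3 is only $\mathcal O(f^{-(1+\rho)/2})\Theta$.
\item Borrowing $\epsilon A\Theta'A$ in step~5 spoils the exact cancellation above and leaves a negative remainder of order $\epsilon\Theta$.
\item Your Yosida part $2\epsilon\theta$ produces negative terms of size $\epsilon f^{-1}\Theta$ through $\widetilde\theta''$. So $\epsilon$ must be small relative to the gain, uniformly for $\sigma$ in a compact subinterval of $(\sqrt2\mu,\infty)$.
\end{enumerate}
The paper handles $r_2$ by putting it into the square, $B=A-\tfrac12(4\lambda\mu-r_2)\widetilde\theta'^{-1}-\mathrm i\widetilde\theta'$, as in \eqref{26060717l}. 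Then this cross term cancels against the $r_2$-term of \eqref{26052223}, and only $r_2^2=\mathcal O(f^{-1-\rho})$ remains. That is where the exponent $(1+\rho)/2$ in (b) enters.

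For assertion~2 the same issue applies. The gain is $cs^2f^{-1}\Theta$, not $s^3\Theta$: the $-\widetilde\theta'^3\Theta$ coming from $-\tfrac18\Theta'''$ is cancelled, not gained. So the errors of order $s^2f^{-1}\Theta$ and $f^{-\rho}\Theta$ that you allow would swamp it. They must be $\mathcal O(s^2f^{-1-\rho})\Theta$ for $m$ to be chosen independently of $s$.
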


On the other hand, for zero spectral parameter we cannot expect any additional exponential decay,
and we go through a subexponential version. Here we can do it in two steps. 

\begin{proposition}\label{20260535017b}
Suppose Assumption~\ref{cond:20022317} with (d), 
let $\kappa=0$, and let $t\ge 1$ be sufficiently large.
\begin{enumerate}
\item
If $\phi\in \mathcal E_{0,-t}$ solves \eqref{200125}, then $\phi\in \mathcal E_{0,-\infty}$. 
\item
If $\phi\in \mathcal E_{0,-\infty}$ solves \eqref{200125}, 
then $\phi=0$ on $\mathbb R^d$. 
\end{enumerate}
\end{proposition}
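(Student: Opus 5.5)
Both assertions will be proved with the commutator scheme of Section~3, now based on the representation \eqref{26052315} of $\mathop{\mathrm{Im}}(A\Theta(H-\kappa^2))$ and on positivity, as in Ito--Skibsted~\cite{MR4062329}. With $\kappa=0$ the terms $-2\lambda\mu\mathop{\mathrm{Re}}(\Theta A)$ and $(\lambda^2-\mu^2)\Theta'$ disappear. The only non-self-adjoint contributions then come from $\mathop{\mathrm{Re}}((r_1+r_2)\Theta A)$, and the only long-range real contribution is $-\tfrac12(\partial_fq_1)\Theta$. The doubled exponents in Assumption~\ref{cond:20022317}~(d) are what make these terms controllable against a weight of the form $\mathrm e^{2tf^{1-\rho}}$.

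For assertion~1, take $\Theta=\chi_{m,n}\mathrm e^{2\widetilde\theta}$ with
\[
\widetilde\theta=-tf^{1-\rho}+(t+t')\theta_\rho,
\]
where $\theta_\rho$ is a subexponential analogue of the refined Yosida approximation: a bounded, nondecreasing function converging monotonically to $f^{1-\rho}$ as $\nu\to\infty$, with derivatives decaying as in Lemma~\ref{2605242351}. One could take $\theta_\rho=\int_0^f(1-\rho)\tau^{-\rho}(1+\tau/2^\nu)^{-1}\,\mathrm d\tau$, suitably regularised at $0$. The aim is the analogue of Lemma~\ref{26060317e},
\[
\mathop{\mathrm{Im}}\bigl(A\Theta(H)\bigr)\ge cf^{-1}\widetilde\theta'^{\,2}\cdots
\]
or more precisely a lower bound of the form $c\,t^3f^{-3\rho}\Theta$ (up to boundary terms). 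The leading positive terms in \eqref{26052315} are $\tfrac12A\Theta'A=\tfrac12A\widetilde\theta'\Theta A+\dots$, the term $\tfrac12f^{-1}\Theta L\ge0$, and $-\tfrac18\Theta'''\approx-\widetilde\theta'^3\Theta$.

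Here $\widetilde\theta'\approx -t'(1-\rho)f^{-\rho}$ is negative, which is the sign that makes $-\widetilde\theta'^3\Theta\sim t'^3f^{-3\rho}\Theta$ positive. The first-order term $\tfrac12A\Theta'A$ is, however, negative. I would handle it with \eqref{26052222} (now with $\kappa=0$), written for the weight $-\Theta'$, which trades $A\Theta'A$ for $\widetilde\theta'^3\Theta$, $L$-terms and $q_1$-terms. This is the same rearrangement that gives \eqref{26052316}, so I would in fact use \eqref{26052316} together with \eqref{26052223}. Identity \eqref{26052223} with $\kappa=0$ reads $\tfrac12\mathop{\mathrm{Re}}(\Theta'A)=-(r_1+r_2)\Theta+\mathop{\mathrm{Im}}(\Theta H)$. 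Applied with $\Theta$ replaced by a suitable $\Theta\cdot g$, it converts $\mathop{\mathrm{Re}}((r_1+r_2)\Theta A)$ into $r_1^2$-type terms of order $f^{-1-3\rho}\cdot f^{\rho}\,\Theta$. The regular part $r_1$ can be moved through $A$ at the cost of $\partial r_1=O(f^{-1-3\rho})$, and the remainder is bounded by Cauchy--Schwarz as $\epsilon t'\,\widetilde\theta'^{-1}A\Theta A+C(t')^{-1}f^{\rho}r_1^2\Theta$, with $f^\rho r_1^2\lesssim f^{-1-2\rho}\ll f^{-3\rho}$ since $\rho<1$.

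The remaining terms are $\partial_fq_1\le Cf^{-1-2\rho}$, $q_1\widetilde\theta'=O(tf^{-3\rho})$ and the $q_2$-terms. These are all $o(t^3f^{-3\rho})$ for large $t$, or are absorbed as in \eqref{26061118}. Having the bound, I would take expectations on $\chi_{m-2,n+2}\phi$ and let $n\to\infty$, using $\phi\in\mathcal E_{0,-t}$ to kill the outer boundary term, then let $\nu\to\infty$. This yields $\phi\in\mathcal E_{0,-t'}$ for every $t'$, hence $\phi\in\mathcal E_{0,-\infty}$.

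For assertion~2, use $\Theta=\chi_{m,n}\mathrm e^{2\widetilde\theta}$ with $\widetilde\theta=tf^{1-\rho}$, $t\to+\infty$, so that now $\widetilde\theta'>0$. The favourable terms in \eqref{26052315} are $\tfrac12A\Theta'A\ge0$, $\tfrac12f^{-1}\Theta L$, and $-\tfrac18\Theta'''$, whose leading part is $-\widetilde\theta'^3\Theta<0$. I would therefore split $A\Theta'A$ via \eqref{26052222} and \eqref{26052315}, as in Ito--Skibsted: write $A=(A-\mathrm i\widetilde\theta')+\mathrm i\widetilde\theta'$, so that $\mathrm e^{\widetilde\theta}A\mathrm e^{-\widetilde\theta}$ yields a positive term of size $\widetilde\theta''\widetilde\theta'^2\Theta$, of order $t^3f^{-1-2\rho}$ with the correct sign thanks to concavity of $f^{1-\rho}$, together with Carleman-type cross terms. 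The potential errors are controlled as before, and the conclusion is $\|\bar\chi_m\mathrm e^{tf^{1-\rho}}\phi\|^2\le C\|\chi_{m-1,m+1}\mathrm e^{tf^{1-\rho}}\phi\|^2$ uniformly in large $t$, with $m$ independent of $t$. Comparing $\mathrm e^{t(2^{m+2})^{1-\rho}}$ with $\mathrm e^{t(2^{m+1})^{1-\rho}}$ and letting $t\to\infty$ forces $\phi=0$ for $f\ge2^{m+2}$, and unique continuation (Remarks~\ref{260704}) gives $\phi=0$ on $\mathbb R^d$.

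The hardest step is this last estimate with $m$ chosen independently of $t$, i.e.\ getting a positive leading term of exact order $t^3f^{-1-2\rho}$ that dominates the $r_1$-contributions of order $tf^{-(1+3\rho)/2}\cdot$ (first-order terms) uniformly in $t$. I would first try to absorb those contributions into the $A$-quadratic term with $t$-weighted Cauchy--Schwarz.
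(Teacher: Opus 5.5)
\textbf{Assertion~1: the proposed estimate cannot hold.}
Your argument for assertion~1 has a genuine gap, and the problem is a sign, not a missing estimate.

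By the paper's convention, $\mathcal E_{0,-t}=\mathrm e^{-tf^{1-\rho}}L^2(\mathbb R^d)$, so the hypothesis is $\mathrm e^{tf^{1-\rho}}\phi\in L^2$. Your weight obeys $\mathrm e^{2\widetilde\theta}\le C_\nu\mathrm e^{-2tf^{1-\rho}}$ at fixed $\nu$. So the outer boundary term already vanishes for $\phi\in\mathcal E_{0,t}$, where growth is allowed. Your scheme would then show that every zero-energy eigenfunction of subexponential growth lies in $\mathcal E_{0,-\infty}$, and hence vanishes by assertion~2.

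That is false. $V=0$ satisfies (d), and $\phi\equiv1$ solves \eqref{200125} with $\kappa=0$ and lies in $\mathcal E_{0,t}$ for every $t>0$. So the lower bound you aim at cannot hold. The failure is visible in two places.

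\begin{itemize}
\item $\widetilde\theta'=-t(1-\rho)f^{-\rho}+(t+t')\theta_\rho'$ has no sign uniform in $\nu$. It is $\approx t'(1-\rho)f^{-\rho}>0$ on $\{f\ll2^\nu\}$ and $\approx -t(1-\rho)f^{-\rho}<0$ on $\{f\gg2^\nu\}$. Your claim $\widetilde\theta'\approx-t'(1-\rho)f^{-\rho}$ is wrong.
\item Even with a uniformly decreasing weight, the Agmon mechanism gives nothing at $\kappa=0$. In \eqref{26052316} the scalar part is $\tfrac18\Theta'''\approx\widetilde\theta'^3\Theta<0$; the factor $2\mu^2-\widetilde\theta'^2$ in \eqref{26061115e} becomes $-\widetilde\theta'^2$. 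Equivalently, the seemingly positive $-\tfrac18\Theta'''$ in \eqref{26052315} is more than cancelled by $\tfrac12A\Theta'A$ once you convert it with \eqref{26052222}.
\end{itemize}

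At $\kappa=0$ the two critical rates coincide, so there is no gap to exploit. Only the $L$-term is positive, and it vanishes on radial states.

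\textbf{What assertion~1 actually needs.}
You need an \emph{increasing} weight compatible with the hypothesis, used in the Rellich representation \eqref{26052315}. That is the machinery you reserve for assertion~2.

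The paper proves a single lemma with $\widetilde\theta=tf^{1-\rho}+s\theta^{1-\rho}$, $s\in[0,1]$, so that $\mathrm e^{2\widetilde\theta}\le C_\nu\mathrm e^{2tf^{1-\rho}}$. It gives the lower bound $ct^2f^{-1-2\rho}\Theta$ modulo boundary terms, uniformly in $\nu$ and in $t\ge t_0$, with $m$ independent of $t$.

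The gain per step is bounded. Near and beyond $f\sim2^\nu$, where $\theta$ saturates, the $s$-part of $f^{-1}\widetilde\theta'+\widetilde\theta''$ can be negative of order $sf^{-1-\rho}$. It must be dominated by the $t$-part $(1-\rho)^2tf^{-1-\rho}$. So assertion~1 comes from the step $t\to t+1$ plus the supremum/contradiction argument, not from a one-shot passage to arbitrary $t'$. Assertion~2 is the case $s=0$ of the same lemma.

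\textbf{Assertion~2: right outline, wrong source of positivity.}
Your outline for assertion~2 matches the paper, but you misidentify where the positive term comes from.

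\begin{itemize}
\item With $B=A-\mathrm i\widetilde\theta'$, the cubic terms are absorbed into $B^*(\widetilde\theta'-\tfrac12f^{-1})\Theta B$. No $t^3$ term survives.
\item The scalar remainder is $\tfrac12f^{-1}\widetilde\theta'^2\Theta+\tfrac12\widetilde\theta'\widetilde\theta''\Theta=\tfrac12(1-\rho)^3t^2f^{-1-2\rho}\Theta$.
\item Its positive part comes from combining $\tfrac12Af^{-1}\Theta A+\tfrac12f^{-1}\Theta L$ via \eqref{26052222}.
\item Concavity works \emph{against} you and is beaten only because $\rho<1$.
\end{itemize}

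Against this square, your Cauchy--Schwarz treatment of $\mathop{\mathrm{Re}}(r_1\Theta A)=\mathop{\mathrm{Re}}(r_1\Theta B)$ is fine. It costs $r_1^2\widetilde\theta'^{-1}\Theta=O(t^{-1}f^{-1-2\rho})\Theta$.
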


\begin{remark}
The above assertion 2 is in close relation with the Landis conjecture. 
See the literature referred to in Remarks~\ref{260704}. 
\end{remark}

\begin{proof}[Deduction of Theorem~\ref{2606034} from Propositions~\ref{20260535017a}, \ref{20260535017}
and \ref{20260535017b}]
It is trivial. 
\end{proof}

We prove Propositions~\ref{20260535017a}, \ref{20260535017} and \ref{20260535017b}
in Sections~\ref{2606070}, \ref{2606071} and \ref{2606072}, respectively. 
We will further split the proofs according to forms of weight functions.  
All of these proofs are done by the commutator arguments, 
similarly to the Agmon estimates in the previous section, 
however this time we will only focus on positivity of the representation \eqref{26052315},
and dependence on parameters is a bit more involved.

Each of the following commutator arguments employs techniques similar to those in Section~\ref{2606035}, 
and we may omit some of the details. See Section~\ref{2606035} for them.

\subsection{Additional exponential decay estimates}\label{2606070}

\subsubsection{Non-real spectral parameters}

We start with Proposition~\ref{20260535017a} with $\kappa=\lambda+\mathrm i\mu\in\mathbb C_+\setminus\mathbb R$. 
The weight function here is  
\begin{equation}
\Theta=\chi_{m,n}\mathrm e^{2\widetilde\theta}
;\quad 
\widetilde\theta=\sqrt2\mu f+s\theta+tf^{1-\rho}
,
\label{2507191448d}
\end{equation}
with parameters $m,n,\nu\in\mathbb N$, $s\in (0,1]$ and $t\ge 1$ 
obeying $n>m$.
Recall that 
$\chi_{m,n}$ and $\theta$ are from \eqref{eq:11.7.11.5.14} and Definition~\ref{25071914480}, respectively. 

\begin{lemma}\label{260612}
Suppose Assumption~\ref{cond:20022317} with (b), and let $\kappa=\lambda+\mathrm i\mu\in\mathbb C_+\setminus\mathbb R$. 
Then there exist $c,C$, $m\in\mathbb N$, $s\in (0,1]$ and $t\ge 1$ 
such that uniformly in $n>m$ and $\nu\in\mathbb N$
\[
\begin{split}
\mathop{\mathrm{Im}}\bigl(A\Theta(H-\kappa^2)\bigr)
&\ge cf^{-1-\rho}\Theta-C\bigl(\chi_{m-1,m+1}^2+\chi_{n-1,n+1}^2\bigr)f^{-1}\mathrm e^{2\widetilde\theta}
\\&\phantom{{}={}}
-C(H-\kappa^2)^*\chi_{m-1,n+1}f^{1+\rho}\mathrm e^{2\widetilde\theta}(H-\kappa^2)
.
\end{split}
\]  
\end{lemma}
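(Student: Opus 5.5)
We follow the scheme of Lemma~\ref{26060317e}, but now use the representation \eqref{26052315}, as announced for Rellich's theorem, with $\widetilde\theta=\sqrt2\mu f+s\theta+tf^{1-\rho}$ from \eqref{2507191448d}, so that $\widetilde\theta'=\sqrt2\mu+s\theta'+(1-\rho)tf^{-\rho}$.

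\textbf{Step 1: error terms.} The admissible error terms are now one order weaker in $f$ than in \eqref{260524e}:
\[
Q=f^{-1-2\rho}\Theta+t^2(|\chi_{m,n}'|+|\chi_{m,n}''|+|\chi_{m,n}'''|)\mathrm e^{2\widetilde\theta}+p_j(\chi_{m,n}f^{-1-2\rho}+|\chi_{m,n}'|)\mathrm e^{2\widetilde\theta}p_j+(H-\kappa^2)^*(f^{1+\rho}\Theta+|\chi_{m,n}'|\mathrm e^{2\widetilde\theta})(H-\kappa^2).
\]
We estimate $Q$ as in \eqref{26060922e}, obtaining the boundary terms and the $(H-\kappa^2)^*\cdots(H-\kappa^2)$ term of the statement.

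\textbf{Step 2: expand \eqref{26052315}.} Since $\Theta'=(2\widetilde\theta'\chi_{m,n}+\chi_{m,n}')\mathrm e^{2\widetilde\theta}$, the term $\tfrac12A\Theta'A$ equals $(A+\mathrm i\widetilde\theta')^*\widetilde\theta'\Theta(A-\mathrm i\widetilde\theta')$ plus zeroth-order terms. Conjugation by $\mathrm e^{\widetilde\theta}$ makes the main part $\widetilde\theta'|B\psi|^2$ with $\psi=\mathrm e^{\widetilde\theta}\phi$ and $B$ the conjugated radial derivative. The zeroth-order remainder is roughly $\widetilde\theta'^3\Theta+\tfrac12\widetilde\theta'\widetilde\theta''\Theta$, which we combine with $-\tfrac18\Theta'''$. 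The term $\tfrac12f^{-1}\Theta L\ge0$ is discarded. The term $\tfrac12q_2\Theta'$ is $O(f^{-1-\rho})\Theta$ and must be compared with the main positivity below. The term $-\tfrac12(\partial_fq_1)\Theta$ is bounded below by $-Cf^{-1-\rho}\Theta$.

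\textbf{Step 3: non-self-adjoint terms.} These are $-2\lambda\mu\mathop{\mathrm{Re}}(\Theta A)$ and $\mathop{\mathrm{Re}}(r_2\Theta A)$. As in \eqref{260611e}, we use \eqref{26052223} with $\Theta$ replaced by $\widetilde\theta'^{-1/2}$-conjugated weights. This replaces $\mathop{\mathrm{Re}}(\Theta' A)$, that is, $2\mathop{\mathrm{Re}}(\widetilde\theta'\Theta A)$, by $4\lambda\mu\Theta-2r_2\Theta$ up to $Q$. The $r_2$ contributions, of size $f^{-(1+\rho)/2}$, are handled by Cauchy--Schwarz: $|r_2\Theta A|\le \epsilon f^{-1-\rho}\cdots+\epsilon^{-1}r_2^2\cdots$, with $r_2^2\le Cf^{-1-\rho}$, using the same \eqref{26052223} trick. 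This is exactly where the middle-range exponent $(1+\rho)/2$ in (b) enters.

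\textbf{Step 4: main positivity.} Collecting terms, the leading scalar coefficient should be
\[
-\widetilde\theta'^{-1}(2\lambda^2+\widetilde\theta'^2)(2\mu^2-\widetilde\theta'^2),
\]
with the sign reversed relative to \eqref{26061115e} because \eqref{26052315} was used. It vanishes at $\widetilde\theta'=\sqrt2\mu$. Its linearization in $\widetilde\theta'-\sqrt2\mu=s\theta'+(1-\rho)tf^{-\rho}$ gives a positive contribution of order $s\theta'+tf^{-\rho}$, plus the positive derivative term from $\widetilde\theta''$, which for the $tf^{1-\rho}$ part is of order $-t\rho(1-\rho)f^{-1-\rho}$. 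We choose the signs so that the $tf^{-\rho}$ and $s\theta'$ gains, multiplied by $\Theta$, dominate $Cf^{-1-\rho}\Theta$. Since $\theta'\ge \theta_0^{-1-\rho}$ and $\theta_0\lesssim f$ uniformly only for $f\ge 2^\nu$, we rely instead on the $tf^{-\rho}$ term, which beats $f^{-1-\rho}$ for large $m$. The parameter $s\le1$ is kept small so that $s^2\theta'^2$ and $s\theta''$ errors are absorbed. This yields $cf^{-1-\rho}\Theta$ after taking $t$ large and then $m$ large.

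\textbf{Main obstacle.} The delicate step is Step 3. Replacing $\mathop{\mathrm{Re}}(\Theta A)$ via \eqref{26052223} requires $\widetilde\theta'$ bounded away from zero, which holds since $\mu>0$. However, the resulting expression has to be expanded to second order around $\widetilde\theta'=\sqrt2\mu$, and the cross terms with $r_2$ must be absorbed into the weak $f^{-1-\rho}$ positivity uniformly in $\nu$. If the signs fail, we first try conjugating $A$ by $\mathrm e^{\widetilde\theta}$ before applying \eqref{26052223}, so that $B$ carries the gain.
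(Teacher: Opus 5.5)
\textbf{There is a genuine gap: the positive term you claim in Step 4 does not arise from \eqref{26052315}, and Steps 2--3 as written leave an order-one negative term.} The coefficient $-\widetilde\theta'^{-1}(2\lambda^2+\widetilde\theta'^2)(2\mu^2-\widetilde\theta'^2)$ comes from the representation \eqref{26052316}. There it is paired with the angular term $\tfrac12(f^{-1}-2\widetilde\theta')\Theta L$, which is negative and unbounded when $\widetilde\theta'\approx\sqrt2\mu>0$. That is why the paper uses \eqref{26052316} only for the Agmon estimates.

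In \eqref{26052315} the order-one scalars cancel instead. Modulo $Q$,
\[
A\widetilde\theta'\Theta A-\widetilde\theta'^3\Theta-\tfrac32\widetilde\theta'\widetilde\theta''\Theta
=(A+\mathrm i\widetilde\theta')\widetilde\theta'\Theta(A-\mathrm i\widetilde\theta')+\tfrac12\widetilde\theta'\widetilde\theta''\Theta ,
\]
and $\tfrac12\widetilde\theta'\widetilde\theta''\Theta\approx-\tfrac12\sqrt2\mu\,\rho(1-\rho)tf^{-1-\rho}\Theta$ is negative. Applying \eqref{26052223} ``as in \eqref{260611e}'' to $-2\lambda\mu\mathop{\mathrm{Re}}(\Theta A)$ then produces $-4\lambda^2\mu^2\widetilde\theta'^{-1}\Theta+2\lambda\mu r_2\widetilde\theta'^{-1}\Theta$. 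The first piece is an order-one negative term and the second is indefinite of size $f^{-(1+\rho)/2}$; this is exactly the difficulty flagged in the remark after the lemma. Your Cauchy--Schwarz bound on $\mathop{\mathrm{Re}}(r_2\Theta A)$ fails for the same reason: it costs either $\epsilon A\Theta A$ with $\epsilon$ of order one, or $f^{-(1+\rho)/2}\Theta$. Both are far larger than any available gain, and discarding $\tfrac12 f^{-1}\Theta L$ removes the only source of one.

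The missing ideas, as in the paper, are these.
\begin{enumerate}
\item Complete the square with a real shift,
\[
B=A-\tfrac12(4\lambda\mu-r_2)\widetilde\theta'^{-1}-\mathrm i\widetilde\theta' .
\]
The leftover first-order term becomes $+2\lambda\mu\mathop{\mathrm{Re}}(\Theta A)$. Then \eqref{26052223} returns $+4\lambda^2\mu^2\widetilde\theta'^{-1}\Theta-2\lambda\mu r_2\widetilde\theta'^{-1}\Theta$, which cancels the constant $-\tfrac14(4\lambda\mu-r_2)^2\widetilde\theta'^{-1}\Theta$ of the square up to $-\tfrac14r_2^2\widetilde\theta'^{-1}\Theta=\mathcal O(f^{-1-\rho})\Theta$. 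This, not Cauchy--Schwarz, is where $|r_2|\le Cf^{-(1+\rho)/2}$ enters, and $r_2$ is never differentiated.
\item Keep $\tfrac12f^{-1}\Theta L$. Split $\tfrac12Af^{-1}\Theta A$ off $A\widetilde\theta'\Theta A$ and combine it with $\tfrac12f^{-1}\Theta L$ via \eqref{26052222} (with $\Theta$ replaced by $f^{-1}\Theta$), which gives $f^{-1}(\widetilde\theta'^2+\lambda^2-\mu^2)\Theta$. Add the $f^{-1}$-terms from the square with weight $\widetilde\theta'-\tfrac12f^{-1}$. The result is the only positive scalar,
\[
\tfrac12f^{-1}(1+2\lambda^2\widetilde\theta'^{-2})(\widetilde\theta'^2-2\mu^2)\Theta ,
\]
which is of order $tf^{-1-\rho}$, not $tf^{-\rho}$.
\item This gain has the same order as the negative term $\tfrac12\widetilde\theta'\widetilde\theta''(1+4\lambda^2\mu^2\widetilde\theta'^{-4})\Theta$, so an exact balance of constants is needed. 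Choose $s$ small and $m$ large so that $\sqrt2\mu\ge\rho\widetilde\theta'$ on $\mathop{\mathrm{supp}}\chi_{m,n}$. Then the gain $\tfrac{1+\rho}2(1-\rho)t$ beats the loss $\tfrac12\rho(1-\rho)t$, and $(1+\rho)s\theta'+sf\theta''\ge0$ by Lemma~\ref{2605242351}.
\end{enumerate}
Your claim that the $tf^{-\rho}$ term ``beats $f^{-1-\rho}$'' misidentifies both the size of the gain and its real competitor.
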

\begin{remark}
Technically, it has been very difficult to control the fourth term of \eqref{26052317l} below, 
since it produces a strong negative contribution if we apply \eqref{26052223} to it as it is. 
However, it turned out that we could squeeze a positive contribution from 
the third term of \eqref{26052317l} to compensate it. 
\end{remark}
\begin{proof}
Fix any $\kappa=\lambda+\mathrm i\mu\in\mathbb C_+\setminus\mathbb R$. 
Note that the below estimates are uniform in $n,m,\nu\in\mathbb N$, $s\in (0,1]$ and $t\ge 1$ with 
$n>m$, and constants $C>0$ are independent of them. 
In this proof the admissible error terms are of the form
\begin{equation}
\begin{split}
Q&=
f^{-1-\rho}\Theta
+t^2f^{-1-2\rho}\Theta
+t^2\bigl(|\chi_{m,n}'|+|\chi_{m,n}''|+|\chi_{m,n}'''|\bigr)\mathrm e^{2\widetilde\theta}
\\&\phantom{{}={}}
+p_j\bigl(f^{-1-\rho}\Theta+|\chi_{m,n}'|\mathrm e^{2\widetilde\theta} \bigr)p_j
\\&\phantom{{}={}}
+(H-\kappa^2)^*\bigl(f^{1+\rho}\Theta+|\chi_{m,n}'|\mathrm e^{2\widetilde\theta}\bigr)(H-\kappa^2)
,
\end{split}
\label{260524l}
\end{equation}
which depends on $t$. 
Then by the expression \eqref{26052315} we have 
\begin{equation}
\begin{split}
\mathop{\mathrm{Im}}\bigl(A\Theta(H-\kappa^2)\bigr)
&\ge 
A\widetilde\theta'\Theta A
+\tfrac12f^{-1}\Theta L
-2\lambda\mu\mathop{\mathrm{Re}}(\Theta A)
\\&\phantom{{}={}}
-\widetilde\theta'^3\Theta 
-\tfrac32\widetilde\theta'\widetilde\theta''\Theta 
+\mathop{\mathrm{Re}}(r_2\Theta A)
-CQ
\\&\ge 
\tfrac12Af^{-1}\Theta A
+\tfrac12f^{-1}\Theta L
+A\bigl(\widetilde\theta'-\tfrac12f^{-1}\bigr)\Theta A
\\&\phantom{{}={}}
-\mathop{\mathrm{Re}}\bigl((2\lambda\mu-r_2)\Theta A\bigr)
-\widetilde\theta'^3\Theta 
-\tfrac32\widetilde\theta'\widetilde\theta''\Theta 
-CQ
.
\end{split}
\label{26052317l}
\end{equation}
The first and the second terms on the right-hand side 
of \eqref{26052317l} can be combined by using \eqref{26052222} with 
$\Theta$ replaced by $f^{-1}\Theta$, 
and they produce a positive contribution as 
\begin{equation}
\begin{split}
\tfrac12Af^{-1}\Theta A
+\tfrac12f^{-1}\Theta L
&\ge 
f^{-1}\widetilde\theta'^2\Theta
+\bigl(\lambda^2-\mu^2\bigr)f^{-1}\Theta 
-CQ
.
\end{split}
\label{26052422l}
\end{equation}
Meanwhile, a strong negative contribution from the fifth and the sixth terms of \eqref{26052317l} 
and an indefinite one from the fourth
can be moderated with the third. 
In fact, if we set 
\[
B=A-\tfrac12(4\lambda\mu-r_2)\widetilde\theta'^{-1}-\mathrm i\widetilde\theta',
\]
then we can combine and rewrite the third to the sixth terms of \eqref{26052317l} as 
\begin{equation}
\begin{split}
&
A\bigl(\widetilde\theta'-\tfrac12f^{-1}\bigr)\Theta A
-\mathop{\mathrm{Re}}\bigl((2\lambda\mu-r_2)\Theta A\bigr)
-\widetilde\theta'^3\Theta 
-\tfrac32\widetilde\theta'\widetilde\theta''\Theta 
\\&
=
B^*\bigl(\widetilde\theta'-\tfrac12f^{-1}\bigr)\Theta B
+2\lambda\mu\mathop{\mathrm{Re}}\bigl(\widetilde\theta'^{-1}\bigl(\widetilde\theta'-f^{-1}\bigr)\Theta A\bigr)
+\tfrac12\mathop{\mathrm{Re}}\bigl(r_2f^{-1}\widetilde\theta'^{-1}\Theta A\bigr)
\\&\phantom{{}={}}
+\bigl(\widetilde\theta'(\widetilde\theta'-\tfrac12f^{-1})\Theta\bigr)' 
-\bigl(\tfrac14(4\lambda\mu-r_2)^2\widetilde\theta'^{-2}+\widetilde\theta'^2\bigr)\bigl(\widetilde\theta'-\tfrac12f^{-1}\bigr)\Theta 
\\&\phantom{{}={}}
-\widetilde\theta'^3\Theta 
-\tfrac32\widetilde\theta'\widetilde\theta''\Theta 
\\&
\ge 
B^*\bigl(\widetilde\theta'-\tfrac12f^{-1}\bigr)\Theta B
+\lambda\mu\widetilde\theta'^{-1/2}\mathop{\mathrm{Re}}(\Theta' A)\widetilde\theta'^{-1/2}
\\&\phantom{{}={}}
-\lambda\mu f^{-1/2}\widetilde\theta'^{-1}\mathop{\mathrm{Re}}(\Theta' A)f^{-1/2}\widetilde\theta'^{-1}
-4\lambda^2\mu^2\widetilde\theta'^{-1}\Theta 
+2\lambda^2\mu^2f^{-1}\widetilde\theta'^{-2}\Theta
\\&\phantom{{}={}}
-\tfrac12f^{-1}\widetilde\theta'^2\Theta 
+\tfrac12\widetilde\theta'\widetilde\theta''\Theta 
+2\lambda\mu r_2\widetilde\theta'^{-1}\Theta 
-CQ
. 
\end{split}
\label{26060717l}
\end{equation}
Let us eliminate $\mathop{\mathrm{Re}}(\Theta' A)$ from the second and the third terms 
of \eqref{26060717l} by using \eqref{26052223}. In fact, we can rewrite and estimate them as 
\begin{equation}
\begin{split}
&
\lambda\mu\widetilde\theta'^{-1/2}\mathop{\mathrm{Re}}(\Theta' A)\widetilde\theta'^{-1/2}
-\lambda\mu f^{-1/2}\widetilde\theta'^{-1}\mathop{\mathrm{Re}}(\Theta' A)f^{-1/2}\widetilde\theta'^{-1}
\\&
\ge 
4\lambda^2\mu^2\widetilde\theta'^{-1}\Theta
-4\lambda^2\mu^2 f^{-1}\widetilde\theta'^{-2}\Theta
-2\lambda\mu r_2\widetilde\theta'^{-1}\Theta 
+2\lambda^2\mu^2\widetilde\theta'^{-3}\widetilde\theta''\Theta
-CQ
.
\end{split}
\label{26061223l}
\end{equation}
The estimates \eqref{26060717l} and \eqref{26061223l} imply 
\begin{equation}
\begin{split}
&A\bigl(\widetilde\theta'-\tfrac12f^{-1}\bigr)\Theta A
-\mathop{\mathrm{Re}}\bigl((2\lambda\mu-r_2)\Theta A\bigr)
-\widetilde\theta'^3\Theta 
\\&
\ge 
B^*\bigl(\widetilde\theta'-\tfrac12f^{-1}\bigr)\Theta B
-\tfrac12f^{-1}\widetilde\theta'^2\Theta
-2\lambda^2\mu^2f^{-1}\widetilde\theta'^{-2}\Theta
\\&\phantom{{}={}}
+\tfrac12\widetilde\theta'\widetilde\theta''\bigl(1+4\lambda^2\mu^2\widetilde\theta'^{-4}\bigr)\Theta
-CQ
. 
\end{split}
\label{26052423l}
\end{equation}
Thus by \eqref{26052317l}, \eqref{26052422l} and \eqref{26052423l} we obtain  
\begin{equation}
\begin{split}
\mathop{\mathrm{Im}}\bigl(A\Theta(H-\kappa^2)\bigr)
&\ge 
B^*\bigl(\widetilde\theta'-\tfrac18f^{-1}\bigr)\Theta B
+\tfrac12f^{-1}\bigl(1+2\lambda^2\widetilde\theta'^{-2}\bigr)\bigl(\widetilde\theta'^2-2\mu^2\bigr)\Theta
\\&\phantom{{}={}}
+\tfrac12\widetilde\theta'\widetilde\theta''\bigl(1+4\lambda^2\mu^2\widetilde\theta'^{-4}\bigr)\Theta
-CQ
.
\end{split}
\label{26052420l}
\end{equation}

Let us take a closer look at the second and the third terms of \eqref{26052420l}. 
Choose $s$ small enough, and restrict $m$ large enough depending on $t$ such that 
\[
\rho^{-1}(1-\rho)\sqrt2\mu\ge s\theta'+tf^{-\rho}\ \ \text{on }\mathop{\mathrm{supp}}\chi_{m,n}. 
\]
Then we have 
\[
\sqrt2\mu\ge \rho\widetilde\theta'\ \ \text{on }\mathop{\mathrm{supp}}\chi_{m,n}
,
\]
so that 
by using \eqref{2507191448d} and Lemma~\ref{2605242351} we can estimate them as 
\begin{equation}
\begin{split}
&
\tfrac12f^{-1}\bigl(1+2\lambda^2\widetilde\theta'^{-2}\bigr)\bigl(\widetilde\theta'^2-2\mu^2\bigr)\Theta
+\tfrac12\widetilde\theta'\widetilde\theta''\bigl(1+4\lambda^2\mu^2\widetilde\theta'^{-4}\bigr)\Theta
\\&\ge 
\tfrac{1+\rho}2f^{-1}\bigl(1+2\lambda^2\widetilde\theta'^{-2}\bigr)\bigl(s\theta'+(1-\rho)tf^{-\rho}\bigr)\widetilde\theta'\Theta
\\&\phantom{{}={}}
+\tfrac12f^{-1}\bigl(1+2\lambda^2\widetilde\theta'^{-2}\bigr)\bigl(sf\theta''-\rho(1-\rho)tf^{-\rho}\bigr)\widetilde\theta'\Theta
-CQ
\\&\ge 
C^{-1}tf^{-1-\rho}\Theta
-CQ
.
\end{split}
\label{260611123}
\end{equation}
We substitute \eqref{260611123} into \eqref{26052420l},
and also estimate $Q$ similarly to \eqref{26060922e}. 
At last we obtain 
\begin{equation*}
\begin{split}
\mathop{\mathrm{Im}}\bigl(A\Theta(H-\kappa^2)\bigr)
&\ge 
B^*\bigl(\widetilde\theta'-\tfrac12f^{-1}\bigr)\Theta B
+\bigl(C^{-1}tf^{-1-\rho}-Cf^{-1-\rho}-t^2f^{-1-2\rho}\bigr)\Theta
\\&\phantom{{}={}}
-Ct^2\bigl(\chi_{m-1,m+1}^2+\chi_{n-1,n+1}^2\bigr)f^{-1}\mathrm e^{2\widetilde\theta}
\\&\phantom{{}={}}
-C(H-\kappa^2)^*\chi_{m-1,n+1}f^{1+\rho}\mathrm e^{2\widetilde\theta}(H-\kappa^2)
.
\end{split}
%\label{26060813l}
\end{equation*}
Thus by letting $t$ large enough and then $m\in\mathbb N$ even larger if necessary 
we obtain the assertion. 
\end{proof}

\begin{proof}[Proof of Proposition~\ref{20260535017a} for non-real spectral parameters]
Fix any $\kappa=\lambda+\mathrm i\mu\in \mathbb C_+\setminus \mathbb R$, 
and choose $s\in(0,1]$ and $t\ge 1$ as in Lemma~\ref{260612}. 
Let $\phi\in  \mathcal E_{-\sqrt2\mu,-t}$ be a solution to \eqref{200125}. 
Then by using Lemma~\ref{260612} 
we can discuss similarly to the proof of Proposition~\ref{26060301a} to obtain
\[
\bigl\|\bar\chi_{m}\mathrm e^{(\sqrt2\mu+s)f+tf^{1-\rho}}\phi\bigr\|_{L^2}^2
\le 
C\bigl\|\chi_{m-1,m+1}\phi\bigr\|_{L^2}^2
.
\]
Thus we are done. 
\end{proof}

\subsubsection{Non-zero real spectral parameters}

Next we prove Proposition~\ref{20260535017a} 
for non-zero real spectral parameters. 
Let 
\begin{equation*}
\Theta=\chi_{m,n}\mathrm e^{2\widetilde\theta}
;\quad 
\widetilde\theta=s\theta+tf^{1-\rho}
,
%\label{2507191448}
\end{equation*}
depending on parameters $m,n,\nu\in\mathbb N$, $s\in (0,1]$ and $t\ge 1$ with $n>m$. 
Recall that $\chi_{m,n}$ and $\theta$ are from \eqref{eq:11.7.11.5.14} and Definition~\ref{25071914480}, respectively.

\begin{lemma}\label{26060317}
Suppose Assumption~\ref{cond:20022317} with (c),
and let  $\kappa=\lambda\in\mathbb R\setminus\{0\}$. 
Then there exist $c,C> 0$, $m\in\mathbb N$, $s\in(0,1]$ and $t\ge 1$ such that 
uniformly in $n>m$ and $\nu\in\mathbb N$
\[
\begin{split}
\mathop{\mathrm{Im}}\bigl(A\Theta(H-\kappa^2)\bigr)
&\ge cf^{-1}\Theta-C\bigl(\chi_{m-1,m+1}^2+\chi_{n-1,n+1}^2\bigr)f^{-1+\rho}\mathrm e^{2\widetilde\theta}
\\&\phantom{{}={}}
-C(H-\kappa^2)^*\chi_{m-1,n+1}f^{1+\rho}\mathrm e^{2\widetilde\theta}(H-\kappa^2)
.
\end{split}
\]  
\end{lemma}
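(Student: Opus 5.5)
We follow the scheme of Lemma~\ref{260612}, now with $\mu=0$ and $\kappa^2=\lambda^2>0$. The representation \eqref{26052315} is the starting point. With $\Theta=\chi_{m,n}\mathrm e^{2\widetilde\theta}$, $\widetilde\theta=s\theta+tf^{1-\rho}$, we have $\Theta'=2\widetilde\theta'\Theta+\chi_{m,n}'\mathrm e^{2\widetilde\theta}$ and $\widetilde\theta'=s\theta'+(1-\rho)tf^{-\rho}$. This is small, so the cubic terms $\widetilde\theta'^3\Theta$ and $\widetilde\theta'\widetilde\theta''\Theta$ are of size $s^3+t^3f^{-3\rho}$ and cannot be absorbed outright.

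We collect admissible errors in a $Q$ of the form \eqref{260524l}. One change is needed: $r_1$ is only $\mathcal O(f^{-(1+\rho)/2})$, so the term $\mathop{\mathrm{Re}}((r_1+r_2)\Theta A)$ cannot be placed in $Q$ directly. Since $\lambda\mu=0$, the formula \eqref{26052315} gives
\[
\mathop{\mathrm{Im}}\bigl(A\Theta(H-\kappa^2)\bigr)\ge A\widetilde\theta'\Theta A+\tfrac12 f^{-1}\Theta L-\widetilde\theta'^3\Theta-\tfrac32\widetilde\theta'\widetilde\theta''\Theta-\tfrac12(\partial_fq_1)\Theta+\mathop{\mathrm{Re}}(r_1\Theta A)-CQ.
\]
Assumption (c) gives $-\tfrac12\partial_fq_1\Theta\ge -Cf^{-1-\rho}\Theta$, which goes into $Q$.

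The key step, and the main obstacle, is the term $\mathop{\mathrm{Re}}(r_1\Theta A)$. I would first try the commutator identity \eqref{26052223} with $\Theta$ replaced by a primitive-type weight. Choose $\Psi$ with $\Psi'\approx 2r_1\Theta$, for instance $\Psi=\chi_{m,n}\mathrm e^{2\widetilde\theta}R$ with $\partial_fR\approx r_1$. Then
\[
\mathop{\mathrm{Re}}(r_1\Theta A)=\mathop{\mathrm{Im}}\bigl(\Psi(H-\kappa^2)\bigr)-(r_1+r_2)\Psi+\text{(terms with derivatives of }\mathrm e^{2\widetilde\theta}\text{)},
\]
where the second term is $\mathcal O(f^{-1-\rho})\Theta$, provided $R=\mathcal O(f^{-(1-\rho)/2})$. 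However, $R$ need not decay, since $r_1$ itself is only of middle range. So the fallback, which I expect to be the one that actually works, is completing the square. Set
\[
B=A+\tfrac12 r_1\widetilde\theta'^{-1}-\mathrm i\widetilde\theta',
\]
in analogy with the operator $B$ of Lemma~\ref{260612}, and write
\[
A\widetilde\theta'\Theta A+\mathop{\mathrm{Re}}(r_1\Theta A)-\widetilde\theta'^3\Theta\ \ge\ B^*\widetilde\theta'\Theta B-\tfrac14 r_1^2\widetilde\theta'^{-1}\Theta+(\text{derivative terms}).
\]
Since $r_1^2\le Cf^{-1-\rho}$ and $\widetilde\theta'\ge (1-\rho)tf^{-\rho}$, the loss $r_1^2\widetilde\theta'^{-1}\le Ct^{-1}f^{-1}\Theta$ is small relative to $f^{-1}\Theta$ for large $t$. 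This is exactly why the lemma can only claim $cf^{-1}\Theta$ and why the thick boundary terms carry the weaker factor $f^{-1+\rho}$. The derivative terms involve $\partial r_1=\mathcal O(f^{-1-\rho})$ times $\widetilde\theta'^{-1}\le Ct^{-1}f^{\rho}$, hence are $\mathcal O(f^{-1})$ with small constant, or they fall on $\chi_{m,n}'$. Before completing the square, part of $A\widetilde\theta'\Theta A$, namely $\tfrac12Af^{-1}\Theta A$, is split off as in \eqref{26052317l}, so one must check that $\widetilde\theta'-\tfrac12f^{-1}>0$ on $\mathop{\mathrm{supp}}\chi_{m,n}$. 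This holds since $tf^{-\rho}\gg f^{-1}$ for $m$ large.

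For the positive contribution, the split-off part combines with $\tfrac12f^{-1}\Theta L$ via \eqref{26052222} with $\Theta\to f^{-1}\Theta$, as in \eqref{26052422l}, giving
\[
\tfrac12Af^{-1}\Theta A+\tfrac12 f^{-1}\Theta L\ge \lambda^2 f^{-1}\Theta+f^{-1}\widetilde\theta'^2\Theta-CQ.
\]
Here $\lambda^2f^{-1}\Theta$ is the main positive term. What remains, $-\widetilde\theta'^3\Theta$ at order $f^{-1}$ relative scale, is $\mathcal O(s^3\theta'^3+t^3f^{-3\rho})$. For fixed $t$ this is made smaller than $\tfrac14\lambda^2f^{-1}$ on the support by taking $s$ small and $m$ large; alternatively it is absorbed in $B^*\cdots B$ as in \eqref{26060717l}. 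Finally we estimate $Q$ as in \eqref{26060922e}, the $p_j(\cdot)p_j$ term again via \eqref{26052222}. The order of choices is: $t$ large to control the $r_1^2$ loss, then $s$ small, then $m$ large. This gives the asserted inequality uniformly in $n>m$ and $\nu$.
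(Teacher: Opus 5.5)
Your proposal follows the paper's proof of this lemma. You start from \eqref{26052315}, split off $\tfrac12Af^{-1}\Theta A$ and combine it with $\tfrac12f^{-1}\Theta L$ through \eqref{26052222} to get $\lambda^2f^{-1}\Theta+f^{-1}\widetilde\theta'^2\Theta$. You complete the square with the same $B=A+\tfrac12r_1\widetilde\theta'^{-1}-\mathrm i\widetilde\theta'$, bound the loss by $\tfrac14r_1^2\widetilde\theta'^{-1}\le Ct^{-1}f^{-1}$, and fix $t$, then $s$, then $m$. Two pieces of the bookkeeping are wrong, however.

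\emph{The cubic term.} The first option in your last paragraph fails. Since $\theta'\ge 2^{-1-\rho}$ on $\{f\le 2^\nu\}$ and $\nu$ is arbitrary, $s^3\theta'^3\Theta$ is not $o(f^{-1})\Theta$ uniformly in $\nu$ for any fixed $s>0$. Also $t^3f^{-3\rho}\ge f^{-1}$ when $\rho\le 1/3$. So $-\widetilde\theta'^3\Theta$ has to cancel exactly in the completed square. With weight $w\Theta$, $w=\widetilde\theta'-\tfrac12f^{-1}$, the terms $-\widetilde\theta'^2w\Theta+(\widetilde\theta'w\Theta)'$ equal $\widetilde\theta'^3\Theta-\tfrac12f^{-1}\widetilde\theta'^2\Theta+2\widetilde\theta'\widetilde\theta''\Theta$ up to admissible errors. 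Now add $-\tfrac32\widetilde\theta'\widetilde\theta''\Theta$ and the $f^{-1}\widetilde\theta'^2\Theta$ from the $L$-combination. What survives is $\tfrac12\widetilde\theta'(f^{-1}\widetilde\theta'+\widetilde\theta'')\Theta\ge\tfrac12s(\theta'+f\theta'')\widetilde\theta'f^{-1}\Theta$. Since $\theta'+f\theta''<0$ for $f>2^\nu/\rho$, it is this term, not the cubic one, that forces $s$ small and then $m$ large.

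\emph{The first-order residual.} Your display completes the square on the full weight $\widetilde\theta'\Theta$, where the cross term matches $\mathop{\mathrm{Re}}(r_1\Theta A)$ exactly. After the split, which you need because it is the only source of $\lambda^2f^{-1}\Theta$, the weight is $w\Theta$. With the same shift you then pick up $\tfrac12\mathop{\mathrm{Re}}(r_1f^{-1}\widetilde\theta'^{-1}\Theta A)$, which your sketch does not account for. The $\partial r_1$ terms you mention do not arise, since the shift is never differentiated.

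The paper removes this residual with \eqref{26052223} in \eqref{26061223}, which is exactly where $|\partial r_1|\le Cf^{-1-\rho}$ enters. In your setting there are two simpler options:
\begin{itemize}
\item Use the shift $\tfrac12r_1w^{-1}$. Then the cross term matches exactly and the loss is $\tfrac14r_1^2w^{-1}\Theta\le Ct^{-1}f^{-1}\Theta$ for $m$ large.
\item Bound the residual by Cauchy--Schwarz. Its coefficient is $\mathcal O(t^{-1}f^{-(3-\rho)/2})=o(f^{-1})$, and the resulting $p_j(\cdot)p_j$ term is treated as in \eqref{26060922e}.
\end{itemize}
With either fix your argument closes, and it does not even use the gradient bound on $r_1$.
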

\begin{proof}
Let $\kappa=\lambda\in\mathbb R\setminus\{0\}$ be fixed. 
For the moment do not the below estimates are uniform in $n>m\ge 1$, 
$\nu\in\mathbb N$, $s\in(0,1]$ and $t\ge 1$, 
and constants $C>0$ are independent of them. 

In this proof admissible error terms are of the form 
\begin{equation*}
\begin{split}
Q&=
t^2f^{-\min\{1+\rho,2-\rho\}}\Theta
+t^2\bigl(f^\rho|\chi_{m,n}'|+|\chi_{m,n}''|+|\chi_{m,n}'''|\bigr)\mathrm e^{2\widetilde\theta}
\\&\phantom{{}={}}
+p_j\bigl(f^{-\min\{1+\rho,2-\rho\}}\Theta+f^\rho|\chi_{m,n}'|\mathrm e^{2\widetilde\theta}\bigr) p_j
\\&\phantom{{}={}}
+(H-\kappa^2)^*\bigl(f^{1+\rho}\Theta+|\chi_{m,n}'|\mathrm e^{2\widetilde\theta}\bigr)(H-\kappa^2)
.
\end{split}
%\label{260524}
\end{equation*}
Similarly to \eqref{26052317l}, by \eqref{26052315} we have 
\begin{equation}
\begin{split}
\mathop{\mathrm{Im}}\bigl(A\Theta(H-\kappa^2)\bigr)
&=
A\widetilde\theta'\Theta A
+\tfrac12f^{-1}\Theta L
+\mathop{\mathrm{Re}}(r_1\Theta A)
\\&\phantom{{}={}}
-\widetilde\theta'^3 \Theta
-\tfrac13\widetilde\theta'\widetilde\theta'' \Theta
-CQ
\\&\ge 
\tfrac12Af^{-1}\Theta A
+\tfrac12f^{-1}\Theta L
+A\bigl(\widetilde\theta'-\tfrac12f^{-1}\bigr)\Theta A
\\&\phantom{{}={}}
+\mathop{\mathrm{Re}}(r_1\Theta A)
-\widetilde\theta'^3 \Theta
-\tfrac13\widetilde\theta'\widetilde\theta'' \Theta
-CQ
.
\end{split}
\label{26052317}
\end{equation}
Similarly to \eqref{26052422l},
we combine and estimate the first and the second terms on the right-hand side 
of \eqref{26052317} by using \eqref{26052222} as 
\begin{equation}
\begin{split}
\tfrac12Af^{-1}\Theta A
+\tfrac12f^{-1}\Theta L
&\ge 
\lambda^2f^{-1}\Theta 
+f^{-1}\widetilde\theta'^2\Theta
-CQ
.
\end{split}
\label{26052422}
\end{equation}
On the other hand, 
if we set 
\[
B=A+\tfrac12r_1\widetilde\theta'^{-1}-\mathrm i\widetilde\theta',
\]
then we rewrite and estimate the third to the sixth terms 
of \eqref{26052317} similarly to \eqref{26060717l} as 
\begin{equation}
\begin{split}
&
A\bigl(\widetilde\theta'-\tfrac12f^{-1}\bigr)\Theta A
+\mathop{\mathrm{Re}}(r_1\Theta A)
-\widetilde\theta'^3\Theta
-\tfrac32\widetilde\theta'\widetilde\theta''\Theta
\\&=
B^*\bigl(\widetilde\theta'-\tfrac12f^{-1}\bigr)\Theta B
+\tfrac12\mathop{\mathrm{Re}}\bigl(r_1f^{-1}\widetilde\theta'^{-1}\Theta A\bigr)
+\bigl(\widetilde\theta'(\widetilde\theta'-\tfrac12f^{-1})\Theta\bigr)' 
\\&\phantom{{}={}}
-\bigl(\tfrac14r_1^2\widetilde\theta'^{-2}+\widetilde\theta'^2\bigr)\bigl(\widetilde\theta'-\tfrac12f^{-1}\bigr)\Theta 
-\widetilde\theta'^3\Theta
-\tfrac32\widetilde\theta'\widetilde\theta''\Theta
\\&\ge
B^*\bigl(\widetilde\theta'-\tfrac12f^{-1}\bigr)\Theta B
+\tfrac12\mathop{\mathrm{Re}}\bigl(r_1f^{-1}\widetilde\theta'^{-1}\Theta A\bigr)
\\&\phantom{{}={}}
-\tfrac12f^{-1}\widetilde\theta'^2\Theta 
+\tfrac12\widetilde\theta'\widetilde\theta''\Theta
-\tfrac14r_1^2\widetilde\theta'^{-1}\Theta
-CQ
,
\end{split}
\label{26052423}
\end{equation}
where we have used 
\begin{equation}
0\le \widetilde\theta'^{-1}\le Ct^{-1}f^{-\rho}
.
\label{26061721}
\end{equation}
We rewrite the second term of \eqref{26052423} as follows. 
Let $M\ge 1$ be sufficiently large, and then partially similarly to \eqref{26061223l} 
by using \eqref{26052223} and \eqref{26061721}  
\begin{equation}
\begin{split}
&\tfrac12\mathop{\mathrm{Re}}\bigl(r_1f^{-1}\widetilde\theta'^{-1}\Theta A\bigr)
\\&
\ge 
\tfrac14\bigl(M\widetilde\theta'^{-1}+r_1f^{-1}\widetilde\theta'^{-2}\bigr)^{1/2}\mathop{\mathrm{Re}}(\Theta' A)\bigl(M\widetilde\theta'^{-1}+r_1f^{-1}\widetilde\theta'^{-2}\bigr)^{1/2}
\\&\phantom{{}={}}
-\tfrac14M\widetilde\theta'^{-1/2}\mathop{\mathrm{Re}}(\Theta' A)\widetilde\theta'^{-1/2}
-CQ
\\&
\ge 
\tfrac12\bigl(M\widetilde\theta'^{-1}+r_1f^{-1}\widetilde\theta'^{-2}\bigr)^{1/2}\mathop{\mathrm{Im}}\bigl(\Theta(H-\kappa^2)\bigr)\bigl(M\widetilde\theta'^{-1}+r_1f^{-1}\widetilde\theta'^{-2}\bigr)^{1/2}
\\&\phantom{{}={}}
-\tfrac12M\widetilde\theta'^{-1/2}\mathop{\mathrm{Im}}\bigl(\Theta(H-\kappa^2)\bigr)\widetilde\theta'^{-1/2}
-CQ
\\&
\ge 
-\tfrac14\mathop{\mathrm{Re}}\bigl((\partial_jr_1f^{-1}\widetilde\theta'^{-2})\Theta p_j\bigr)
-CQ
\\&
\ge 
-CQ.
\end{split}
\label{26061223}
\end{equation}
Thus by \eqref{26052423} and \eqref{26061223} 
\begin{equation}
\begin{split}
&A\bigl(\widetilde\theta'-\tfrac12f^{-1}\bigr)\Theta A
+\mathop{\mathrm{Re}}(r_1\Theta A)
-\widetilde\theta'^3\Theta
-\tfrac32\widetilde\theta'\widetilde\theta''\Theta
\\&\ge
B^*\bigl(\widetilde\theta'-\tfrac12f^{-1}\bigr)\Theta B
-\tfrac12f^{-1}\widetilde\theta'^2\Theta 
+\tfrac12\widetilde\theta'\widetilde\theta''\Theta
-\tfrac14r_1^2\widetilde\theta'^{-1}\Theta
-CQ.
\end{split}
\label{260617}
\end{equation}
Thus \eqref{26052317}, \eqref{26052422} and \eqref{260617} imply 
\begin{equation*}
\begin{split}
\mathop{\mathrm{Im}}\bigl(A\Theta(H-\kappa^2)\bigr)
&\ge 
B^*\bigl(\widetilde\theta'-\tfrac12f^{-1}\bigr)\Theta B
+\lambda^2f^{-1}\Theta 
+\tfrac12f^{-1}\widetilde\theta'^2\Theta 
+\tfrac12\widetilde\theta'\widetilde\theta''\Theta
\\&\phantom{{}={}}
-\tfrac14r_1^2\widetilde\theta'^{-1}\Theta
-CQ.
\end{split}
%\label{26052420}
\end{equation*}
Finally let us estimate $Q$ similarly to \eqref{26060922e}, and then 
\begin{equation}
\begin{split}
&\mathop{\mathrm{Im}}\bigl(A\Theta(H-\kappa^2)\bigr)
\\&\ge 
B^*\bigl(\widetilde\theta'-\tfrac12f^{-1}\bigr)\Theta B
+\bigl(\lambda^2+\tfrac12s(\theta'+f\theta'')\widetilde\theta'-\tfrac14r_1^2f\widetilde\theta'^{-1}\bigr) f^{-1}\Theta 
\\&\phantom{{}={}}
-Ct^2f^{-\max\{1+\rho,2-\rho\}}\Theta
-Ct^2\bigl(\chi_{m-1,m+1}^2+\chi_{n-1,n+1}^2\bigr)f^{-1+\rho}\mathrm e^{2\widetilde\theta}
\\&\phantom{{}={}}
-C(H-\kappa^2)^*\chi_{m-1,n+1}f^{1+\rho}\mathrm e^{2\widetilde\theta}(H-\kappa^2)
.
\end{split}
\label{2605242358}
\end{equation}
Now we choose small $s\in(0,1]$ and large $t\ge 1$, noting \eqref{26061721}. 
Then let $m\in\mathbb N$ be large enough, and we obtain the assertion. 
\end{proof}

\begin{proof}[Proof of Proposition~\ref{20260535017a} for non-zero real spectral parameters]
Fix any $\kappa=\lambda\in\mathbb R\setminus\{0\}$, 
and choose $m\in\mathbb N$, $s\in(0,1]$ and $t\ge 1$ as in Lemma~\ref{26060317}. 
Let $\phi\in  \mathcal E_{0,-t}$ be a solution to \eqref{200125}. 
Then, similarly to the proof of Proposition~\ref{26060301a}, 
we can verify 
\[
\bigl\|\bar\chi_{m}f^{-1/2}\mathrm e^{sf+tf^{1-\rho}}\phi\bigr\|_{L^2}^2
\le 
C\bigl\|\chi_{m-1,m+1}\phi\bigr\|_{L^2}^2
.
\]
This implies the assertion. 
\end{proof}

\subsection{Eigenfunctions with additional exponential decay}\label{2606071}

\subsubsection{Super-exponential decay estimates}

Here we prove the assertion 1 of Proposition~\ref{20260535017}.  
We let 
\begin{equation*}
\Theta=\chi_{m,n}\mathrm e^{2\widetilde\theta}
;\quad 
\widetilde\theta=t f+s\theta
,
%\label{2507191448i}
\end{equation*}
with parameters $m,n,\nu\in\mathbb N$, $t>\sqrt2\mu$ and  $s\in (0,1]$ satisfying $n>m$,
where 
$\chi_{m,n}$ and $\theta$ are from \eqref{eq:11.7.11.5.14} and Definition~\ref{25071914480}, respectively. 

\begin{lemma}\label{26060317i}
Suppose Assumption~\ref{cond:20022317} with (b), and let $\kappa=\lambda+\mathrm i\mu\in\mathbb C_+$ and $t_1>t_0>\sqrt2\mu$. 
Then there exist $c,C$, $m\in\mathbb N$ and $s\in (0,1]$ such that 
uniformly in $n>m$, $\nu\in\mathbb N$ and $t\in[t_0,t_1]$
\[
\begin{split}
\mathop{\mathrm{Im}}\bigl(A\Theta(H-\kappa^2)\bigr)
&\ge cf^{-1}\Theta-C\bigl(\chi_{m-1,m+1}^2+\chi_{n-1,n+1}^2\bigr)f^{-1}\mathrm e^{2\widetilde\theta}
\\&\phantom{{}={}}
-C(H-\kappa^2)^*\chi_{m-1,n+1}f^{1+\rho}\mathrm e^{2\widetilde\theta}(H-\kappa^2)
.
\end{split}
\]  
\end{lemma}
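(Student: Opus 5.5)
We follow the scheme of Lemma~\ref{260612}, with $\widetilde\theta=tf+s\theta$, so that $\widetilde\theta'=t+s\theta'$ satisfies $t_0\le\widetilde\theta'\le t_1+1$ and $\widetilde\theta''=s\theta''$ with $|\widetilde\theta''|\le Csf^{-1}\theta'$. The admissible error class is the same as \eqref{260524l} but without the $t^2f^{-1-2\rho}$ term. Now $t$ is bounded above and below, so $t$-powers are harmless and uniformity in $t\in[t_0,t_1]$ is automatic. Under Assumption~\ref{cond:20022317}~(b) we have $r_1=0$. The potential terms $\partial_fq_1\Theta$, $q_2\Theta'$ and $\mathrm{Im}(q_2\Theta A)$ are $\mathcal O(f^{-1-\rho})$, hence absorbed into $Q$. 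The only delicate imaginary term is $\mathrm{Re}(r_2\Theta A)$ with $r_2=\mathcal O(f^{-(1+\rho)/2})$; it is handled, exactly as in Lemma~\ref{260612}, by completing a square through the operator $B$ below.

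The first step starts from \eqref{26052315} and reproduces \eqref{26052317l}. We split $A\widetilde\theta'\Theta A=\tfrac12Af^{-1}\Theta A+A(\widetilde\theta'-\tfrac12f^{-1})\Theta A$. Applying \eqref{26052222} with $f^{-1}\Theta$, the pair $\tfrac12Af^{-1}\Theta A+\tfrac12f^{-1}\Theta L$ yields $f^{-1}(\widetilde\theta'^2+\lambda^2-\mu^2)\Theta-CQ$, as in \eqref{26052422l}. Next we complete the square with
\[
B=A-\tfrac12(4\lambda\mu-r_2)\widetilde\theta'^{-1}-\mathrm i\widetilde\theta'.
\]
The resulting $\mathrm{Re}(\Theta'A)$ terms are eliminated via \eqref{26052223}, as in \eqref{26060717l} and \eqref{26061223l}. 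This reaches the analogue of \eqref{26052420l}:
\[
\mathrm{Im}\bigl(A\Theta(H-\kappa^2)\bigr)\ge B^*\bigl(\widetilde\theta'-\tfrac18f^{-1}\bigr)\Theta B+\tfrac12f^{-1}\bigl(1+2\lambda^2\widetilde\theta'^{-2}\bigr)\bigl(\widetilde\theta'^2-2\mu^2\bigr)\Theta+\tfrac12\widetilde\theta'\widetilde\theta''\bigl(1+4\lambda^2\mu^2\widetilde\theta'^{-4}\bigr)\Theta-CQ.
\]
The $B^*$-term is nonnegative for $m$ large, since $\widetilde\theta'\ge t_0>0$. The cross terms coming from $r_2$ multiplied by $\widetilde\theta'^{-1}$ only produce errors of size $r_2^2\lesssim f^{-1-\rho}$ or $f^{-1}r_2$, both of which lie in $Q$.

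The second step is the positivity, and it is now simpler than in \eqref{260611123}. Since $\widetilde\theta'\ge t_0>\sqrt2\mu$, the main term satisfies
\[
\tfrac12f^{-1}\bigl(1+2\lambda^2\widetilde\theta'^{-2}\bigr)\bigl(\widetilde\theta'^2-2\mu^2\bigr)\ge\tfrac12(t_0^2-2\mu^2)f^{-1}.
\]
This is a genuine $cf^{-1}\Theta$, without any $t$-trick. The $\widetilde\theta''$ term is negative but bounded by $Cs f^{-1}\Theta$, using $|\theta''|\le(1+\rho)f^{-1}\theta'$ and the bound $\widetilde\theta'\le t_1+1$. We therefore choose $s\in(0,1]$ small, depending only on $t_0,t_1,\lambda,\mu$, so that it is absorbed. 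Finally $Q$ is estimated as in \eqref{26060922e}: the $p_jf^{-1-\rho}\Theta p_j$ term is converted via \eqref{26052222} into $Cf^{-1-\rho}\Theta$ plus boundary and $(H-\kappa^2)^*(\cdot)(H-\kappa^2)$ terms. The remaining bulk error $Cf^{-1-\rho}\Theta$ is then dominated by $cf^{-1}\Theta$ once $m$ is large, and the cut-off derivative terms give the $\chi_{m-1,m+1}^2$, $\chi_{n-1,n+1}^2$ boundary contributions.

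The main obstacle is the bookkeeping in the first step. We must verify that the $\lambda\mu$ contributions in the identity \eqref{26060717l} still combine into $(1+2\lambda^2\widetilde\theta'^{-2})(\widetilde\theta'^2-2\mu^2)$ when $\widetilde\theta'$ is near $t$ rather than near $\sqrt2\mu$. We must also check that all constants depend only on $t_0,t_1$. If the identity does not close cleanly, the fallback is to drop the square completion: since $\widetilde\theta'$ is bounded away from $\sqrt2\mu$, we use directly the positivity of $\widetilde\theta'^2-2\mu^2$ together with the \eqref{26052223} substitution of \eqref{260611e} type. This sacrifices only admissible errors of order $f^{-1-\rho}$ and $f^{-1}r_2$.
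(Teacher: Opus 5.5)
Your proposal is correct and is essentially the paper's own proof. The shared steps are:
\begin{itemize}
\item the start from \eqref{26052315};
\item the splitting $A\widetilde\theta'\Theta A=\tfrac12Af^{-1}\Theta A+A(\widetilde\theta'-\tfrac12f^{-1})\Theta A$;
\item the shifted square $B$, with the $\mathrm{Re}(\Theta'A)$ terms eliminated through \eqref{26052223};
\item positivity coming directly from $\widetilde\theta'\ge t_0>\sqrt2\mu$.
\end{itemize}
The differences are cosmetic. The paper puts $sf^{-1}\Theta$ and $p_jsf^{-1}\Theta p_j$ into $Q$ from the start, whereas you carry the $\widetilde\theta''$-terms explicitly to the end. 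You also correctly write $r_2$ where the paper's proof has $r_1$, which vanishes under (b).

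Your fallback, however, does not work and should be dropped. Without the $\lambda\mu$-shift in $B$, the substitution \eqref{26052223} leaves an uncompensated order-one term $-4\lambda^2\mu^2\widetilde\theta'^{-1}\Theta$ whenever $\lambda\mu\neq0$. This is harmless for your main argument: the square-completion identity is purely algebraic and closes for any size of $\widetilde\theta'$, so no fallback is needed.
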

\begin{proof}
Let $\kappa=\lambda+\mathrm i\mu\in\mathbb C_+$ and $t_1>t_0>\sqrt2\mu$ be fixed. 
For the moment the below estimates are uniform in $n>m\ge 1$, $\nu\in\mathbb N$,
$t\in[t_0,t_1]$ and $s\in (0,1]$, 
and constants $C>0$ are independent of them. 
The admissible error terms are of the form
\begin{equation*}
\begin{split}
Q&=
sf^{-1}\Theta+f^{-1-\rho}\Theta
+\bigl(|\chi_{m,n}'|+|\chi_{m,n}''|+|\chi_{m,n}'''|\bigr)\mathrm e^{2\widetilde\theta}
\\&\phantom{{}={}}
+p_j\bigl(sf^{-1}\Theta+f^{-1-\rho}\Theta+|\chi_{m,n}'|\mathrm e^{2\widetilde\theta}\bigr) p_j
\\&\phantom{{}={}}
+(H-\kappa^2)^*\bigl(f^{1+\rho}\Theta+|\chi_{m,n}'|\mathrm e^{2\widetilde\theta}\bigr)(H-\kappa^2)
,
\end{split}
%\label{260524i}
\end{equation*}
depending on $s\in (0,1]$. 
Note that here the terms of the form $sf^{-1}\Theta$ are negligible, cf.\ \eqref{260524l}.
Then by the expression \eqref{26052315} we have, similarly to \eqref{26052317l}, 
\begin{equation}
\begin{split}
\mathop{\mathrm{Im}}\bigl(A\Theta(H-\kappa^2)\bigr)
&\ge 
\tfrac12Af^{-1}\Theta A
+\tfrac12f^{-1}\Theta L
+A\bigl(\widetilde\theta'-\tfrac12f^{-1}\bigr)\Theta A
\\&\phantom{{}={}}
-\mathop{\mathrm{Re}}\bigl((2\lambda\mu-r_1)\Theta A\bigr)
-\widetilde\theta'^3\Theta 
-CQ
.
\end{split}
\label{26052317i}
\end{equation}
Similarly to \eqref{26052422l}, the first and the second terms on the right-hand side 
of \eqref{26052317i} can be combined by using \eqref{26052222} as 
\begin{equation}
\begin{split}
\tfrac12Af^{-1}\Theta A
+\tfrac12f^{-1}\Theta L
%&=
%\tfrac14(f^{-1}\Theta)''
%+\bigl(\lambda^2-\mu^2-q_1-q_2\bigr)f^{-1}\Theta 
%\\&\phantom{{}={}}
%+\mathop{\mathrm{Re}}\bigl(f^{-1}\Theta (H-\kappa^2)\bigr)
%\\
&\ge 
f^{-1}\widetilde\theta'^2\Theta
+(\lambda^2-\mu^2)f^{-1}\Theta 
%+\lambda^2\Theta 
-CQ
.
\end{split}
\label{26052422i}
\end{equation}
The third to the fifth terms of \eqref{26052317i} 
can be rewritten and estimated similarly to \eqref{26052423l}. 
We first set 
\[
B=A-\tfrac12(4\lambda\mu-r_1)\widetilde\theta'^{-1}-\mathrm i\widetilde\theta',
\]
then the third to the fifth terms of \eqref{26052317i} are estimated as 
\begin{equation}
\begin{split}
&
A\bigl(\widetilde\theta'-\tfrac12f^{-1}\bigr)\Theta A
-\mathop{\mathrm{Re}}\bigl((2\lambda\mu-r_1)\Theta A\bigr)
-\widetilde\theta'^3\Theta 
\\&
\ge 
B^*\bigl(\widetilde\theta'-\tfrac12f^{-1}\bigr)\Theta B
+\lambda\mu\widetilde\theta'^{-1/2}\mathop{\mathrm{Re}}(\Theta' A)\widetilde\theta'^{-1/2}
\\&\phantom{{}={}}
-\lambda\mu f^{-1/2}\widetilde\theta'^{-1}\mathop{\mathrm{Re}}(\Theta' A)f^{-1/2}\widetilde\theta'^{-1}
-4\lambda^2\mu^2\widetilde\theta'^{-1}\Theta 
+2\lambda^2\mu^2f^{-1}\widetilde\theta'^{-2}\Theta
\\&\phantom{{}={}}
-\tfrac12f^{-1}\widetilde\theta'^2\Theta 
+2\lambda\mu r_1\widetilde\theta'^{-1}\Theta 
-CQ
. 
\end{split}
\label{26060717i}
\end{equation}
Similarly to \eqref{26061223l}, 
the second and the third terms of \eqref{26060717i} 
are estimated by using \eqref{26052223} as 
\begin{equation*}
\begin{split}
&
\lambda\mu\widetilde\theta'^{-1/2}\mathop{\mathrm{Re}}(\Theta' A)\widetilde\theta'^{-1/2}
-\lambda\mu f^{-1/2}\widetilde\theta'^{-1}\mathop{\mathrm{Re}}(\Theta' A)f^{-1/2}\widetilde\theta'^{-1}
\\&
\ge 
4\lambda^2\mu^2\widetilde\theta'^{-1}\Theta
-2\lambda\mu r_1\widetilde\theta'^{-1}\Theta 
-4\lambda^2\mu^2 f^{-1}\widetilde\theta'^{-2}\Theta 
-CQ
,
\end{split}
\end{equation*}
and, substitute the right above inequality into \eqref{26060717i}, and we obtain 
\begin{equation}
\begin{split}
&
A\bigl(\widetilde\theta'-\tfrac12f^{-1}\bigr)\Theta A
-\mathop{\mathrm{Re}}\bigl((2\lambda\mu-r_1)\Theta A\bigr)
-\widetilde\theta'^3\Theta 
\\&
\ge 
B^*\bigl(\widetilde\theta'-\tfrac12f^{-1}\bigr)\Theta B
-\tfrac12f^{-1}\widetilde\theta'^2\Theta 
-2\lambda^2\mu^2f^{-1}\widetilde\theta'^{-2}\Theta
-CQ
.
\end{split}
\label{26052423i}
\end{equation}
Thus \eqref{26052317i}, \eqref{26052422i} and \eqref{26052423i} imply 
\begin{equation*}
\begin{split}
\mathop{\mathrm{Im}}\bigl(A\Theta(H-\kappa^2)\bigr)
&\ge 
B^*\bigl(\widetilde\theta'-\tfrac12f^{-1}\bigr)\Theta B
+\tfrac12f^{-1}\widetilde\theta'^2\bigl(1+2\lambda^2\widetilde\theta'^{-2}\bigr)\bigl(1-2\mu^2\widetilde\theta'^{-2}\bigr)\Theta 
\\&\phantom{{}={}}
-CQ
.
\end{split}
%\label{26052420i}
\end{equation*}
Finally we estimate $Q$ similarly to \eqref{26060922e}, 
and then obtain 
\begin{equation}
\begin{split}
&\mathop{\mathrm{Im}}\bigl(A\Theta(H-\kappa^2)\bigr)
\\&\ge 
B^*\bigl(\widetilde\theta'-\tfrac12f^{-1}\bigr)\Theta B
+\tfrac12f^{-1}\widetilde\theta'^2\bigl(1+2\lambda^2\widetilde\theta'^{-2}\bigr)\bigl(1-2\mu^2\widetilde\theta'^{-2}\bigr)\Theta 
\\&\phantom{{}={}}
-Csf^{-1}\Theta
-Cf^{-1-\rho}\Theta
-C\bigl(\chi_{m-1,m+1}^2+\chi_{n-1,n+1}^2\bigr)f^{-1}\mathrm e^{2\widetilde\theta}
\\&\phantom{{}={}}
-C(H-\kappa^2)^*\chi_{m-1,n+1}f^{1+\rho}\mathrm e^{2\widetilde\theta}(H-\kappa^2)
.
\end{split}
\label{26060813i}
\end{equation}
Thus by letting $m\in\mathbb N$ be large enough and $s\in(0,1]$ be small enough, 
we obtain the assertion. 
\end{proof}

\begin{proof}[Proof of the assertion 1 of Proposition~\ref{20260535017}.]
Let $\phi\in \mathcal E_{-\sqrt2\mu-0}$ be a solution to \eqref{200125} 
with $\kappa=\lambda+\mathrm i\mu\in \mathbb C_+$. 
Let 
\[
t_1=\sup\bigl\{t\in\mathbb R;\ \phi\in \mathcal E_{-t} \bigr\}>\sqrt2\mu
,
\]
and assume $t_1\neq \infty$. We shall deduce a contradiction. 
Take any $t_0\in (\sqrt2\mu,t_1)$, choose $s_0\in(0,1]$ and 
$m\in\mathbb N$ as in Lemma~\ref{26060317i}, 
and then fix $t\in [t_0,t_1)$ such that $t+s>t_1$. 
Then, similarly to the proof of Proposition~\ref{26060301a}, 
it follows by Lemma~\ref{26060317i} that 
%on the state $\chi_{m-2,n+2}\phi$, and then we have for any $n>m$ and $\nu\in\mathbb N$
%\begin{equation*}
%\bigl\|\chi_{m,n}f^{-1/2}\mathrm e^{t f+s\theta}\phi\bigr\|_{L^2}^2
%\le 
%C\bigl\|\chi_{m-1,m+1}\phi\bigr\|_{L^2}^2
%+C_\nu\bigl\|\chi_{n-1,n+1}f^{-1/2}\mathrm e^{t f}\phi\bigr\|_{L^2}^2
%\label{2606031731i}
%\end{equation*}
%with $C_\nu$ independent of $n$. 
%Let $n\to\infty$, and then let $\nu\to\infty$, and it follows that 
\begin{equation*}
\bigl\|\bar\chi_{m}f^{-1/2}\mathrm e^{(t+s)f}\phi\bigr\|_{L^2}^2
\le 
C\bigl\|\chi_{m-1,m+1}\phi\bigr\|_{L^2}^2
.
\end{equation*}
This implies $\phi\in \mathcal E_{-r}$ for any $r<t+s$, which is a contradiction. 
Thus we are done. 
\end{proof}

\subsubsection{Absence of super-exponentially decaying eigenfunctions}

Next we complete the proof of Proposition~\ref{20260535017} by showing 
the assertion 2. 
Here we choose a simple weight function of the form
\begin{equation*}
\Theta=\chi_{m,n}\mathrm e^{2sf}
%\label{2507191448b}
\end{equation*}
with parameters $m,n\in\mathbb N$ and  $s\in\mathbb R$ satisfying $n>m$,
where $\chi_{m,n}$ and $\theta$ 
are from \eqref{eq:11.7.11.5.14} and Definition~\ref{25071914480}, respectively. 
In the below estimate we particularly focus on uniformity in large $s>\sqrt2\mu$.

\begin{lemma}\label{20260535018}
Suppose Assumption~\ref{cond:20022317} with (b),
and let  $\kappa=\lambda+\mathrm i\mu\in\mathbb C_+$ and $s_0>\sqrt2\mu$. 
Then there exist $c,C> 0$ and $m\in\mathbb N$ such that 
uniformly in $n>m$ and $s\ge s_0$
\[
\begin{split}
\mathop{\mathrm{Im}}\bigl(A\Theta(H-\kappa^2)\bigr)
&\ge cs^2f^{-1}\Theta-Cs^2\bigl(\chi_{m-1,m+1}^2+\chi_{n-1,n+1}^2\bigr)f^{-1}\mathrm e^{2sf}
\\&\phantom{{}={}}
-C(H-\kappa^2)^*\chi_{m-1,n+1}f^{1+\rho}\mathrm e^{2sf}(H-\kappa^2)
.
\end{split}
\]  
\end{lemma}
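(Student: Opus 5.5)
We follow the proof of Lemma~\ref{26060317i} almost verbatim, now with $\widetilde\theta=sf$, so that $\widetilde\theta'=s$ and $\widetilde\theta''=\widetilde\theta'''=0$ on $\operatorname{supp}\chi_{m,n}$. The new feature is that every constant must be tracked in its dependence on $s\ge s_0$. We therefore choose the admissible error terms as
\[
Q=s^2f^{-1-\rho}\Theta+s^2\bigl(|\chi_{m,n}'|+|\chi_{m,n}''|+|\chi_{m,n}'''|\bigr)\mathrm e^{2sf}+p_j\bigl(f^{-1-\rho}\Theta+|\chi_{m,n}'|\mathrm e^{2sf}\bigr)p_j+(H-\kappa^2)^*\bigl(f^{1+\rho}\Theta+|\chi_{m,n}'|\mathrm e^{2sf}\bigr)(H-\kappa^2),
\]
with constants $C$ independent of $n,m$ and $s\ge s_0$. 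Powers of $s$ appear only through derivatives of $\mathrm e^{2sf}$, and each derivative produces at most a factor $s$.

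The first step starts from \eqref{26052315} and splits $\tfrac12A\Theta'A=A s\Theta A=\tfrac12Af^{-1}\Theta A+A(s-\tfrac12f^{-1})\Theta A$. By \eqref{26052222} with $\Theta$ replaced by $f^{-1}\Theta$, the combination $\tfrac12Af^{-1}\Theta A+\tfrac12f^{-1}\Theta L$ is bounded below by $(s^2+\lambda^2-\mu^2)f^{-1}\Theta-CQ$. The $q_1$ term $(\partial_fq_1)f^{-1}\Theta$ is $\mathcal O(f^{-1-\rho})$ times $\Theta$, hence of size $f^{-1-\rho}\Theta\le s_0^{-2}Q$. Since $r_1=0$ under (b), the term $\mathrm{Re}(r_2\Theta A)$ and the $q_2$ terms are handled by Cauchy--Schwarz with weights $f^{-(1+\rho)/2}$: each part goes into $p_jf^{-1-\rho}\Theta p_j$, into $f^{-1-\rho}s^2\Theta$, or into the term with $B$ below.

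The second step completes the square exactly as in \eqref{26060717i}. We set $B=A-2\lambda\mu s^{-1}+\tfrac12r_2s^{-1}-\mathrm is$ and write the remaining terms as $B^*(s-\tfrac12f^{-1})\Theta B$ plus scalars. The resulting $\mathrm{Re}(\Theta'A)$ contributions are eliminated by \eqref{26052223}, which gives $\mathrm{Re}(\Theta'A)\sim 4\lambda\mu\Theta$ up to $Q$. This yields, as in the proof of Lemma~\ref{26060317i},
\[
\mathrm{Im}\bigl(A\Theta(H-\kappa^2)\bigr)\ge \tfrac12f^{-1}s^2\bigl(1+2\lambda^2s^{-2}\bigr)\bigl(1-2\mu^2s^{-2}\bigr)\Theta-CQ,
\]
and the coefficient on the right is at least $c s^2f^{-1}\Theta$ with $c=\tfrac12(1-2\mu^2s_0^{-2})>0$, uniformly in $s\ge s_0$.

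The last step estimates $Q$ as in \eqref{26060922e}. The $p_jp_j$ term is rewritten via \eqref{26052222}, which costs $s^2f^{-1-\rho}\Theta$ plus boundary and $(H-\kappa^2)^*\cdots(H-\kappa^2)$ terms. The cut-off derivative terms are bounded by $s^2(\chi_{m-1,m+1}^2+\chi_{n-1,n+1}^2)f^{-1}\mathrm e^{2sf}$, using $|\chi_{m,n}^{(k)}|\lesssim f^{-k}$ on the transition regions. We then obtain the lower bound $(c-Cf^{-\rho})s^2f^{-1}\Theta$ and take $m$ large.

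The main obstacle is uniformity in large $s$. The square-completion must be done exactly, because any crude Cauchy--Schwarz there would cost $s^3$ or $s^4$, which cannot be absorbed into $s^2f^{-1}\Theta$. We must also check that the error $(s\Theta)''$-type terms from the commutators carry at most $s^2$ times the cut-off derivatives and never $s^3 f^{-1}\Theta$. In \eqref{26052315} the terms $\Theta'''$ and $q_2\Theta'$ vanish or are short-range on the bulk since $\widetilde\theta''=0$, and the cubic $-\widetilde\theta'^3\Theta$ is cancelled precisely by the square $|{-\mathrm i s}|^2 s$ term in $B^*(\cdots)B$. Verifying this cancellation with no leftover $s^3$ is what needs care.
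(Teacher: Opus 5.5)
Your proposal is correct and follows the paper's proof essentially step by step. It uses the same admissible error $Q$ and the same splitting $sA\Theta A=\tfrac12Af^{-1}\Theta A+A(s-\tfrac12f^{-1})\Theta A$, with the first piece combined with $\tfrac12f^{-1}\Theta L$ via \eqref{26052222}. It also has the same $B=A-\tfrac12(4\lambda\mu-r_2)s^{-1}-\mathrm is$ (you rightly use $r_2$, since $r_1=0$ under (b)), the same elimination of $\mathop{\mathrm{Re}}(\Theta'A)$ through \eqref{26052223}, and the same final coefficient $\tfrac12s^2f^{-1}(1+2\lambda^2s^{-2})(1-2\mu^2s^{-2})$.

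One point needs more care. $\mathop{\mathrm{Re}}(r_2\Theta A)$ cannot go into $Q$ by Cauchy--Schwarz; it must sit inside $B^*(s-\tfrac12f^{-1})\Theta B$. Moreover, \eqref{26052223} gives exactly $\mathop{\mathrm{Re}}(\Theta'A)=4\lambda\mu\Theta-2r_2\Theta+2\mathop{\mathrm{Im}}(\Theta(H-\kappa^2))$, and its $r_2$-part is of size $f^{-(1+\rho)/2}\Theta$, not $\mathcal O(Q)$. So the argument closes not ``up to $Q$'' but because the resulting $-2\lambda\mu r_2s^{-1}\Theta$ cancels exactly the $+2\lambda\mu r_2s^{-1}\Theta$ produced by expanding the square.
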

\begin{proof}
Fix any $\kappa=\lambda+\mathrm i\mu\in \mathbb C_+$ and $s_0>\sqrt2\mu$. 
For the moment the below estimates are uniform in $n>m\ge 1$ and $s\ge s_0$, 
and constants $C>0$ are independent of them. 

Here we choose admissible error terms of the form 
\begin{equation*}
\begin{split}
Q&=
s^2f^{-1-\rho}\Theta
+s^2\bigl(|\chi_{m,n}'|+|\chi_{m,n}''|+|\chi_{m,n}'''|\bigr)\mathrm e^{2sf}
+p_j\bigl(f^{-1-\rho}\Theta+|\chi_{m,n}'|\mathrm e^{2sf}\bigr) p_j
\\&\phantom{{}={}}
+(H-\kappa^2)^*\bigl(f^{1+\rho}\Theta+|\chi_{m,n}'|\mathrm e^{2sf}\bigr)(H-\kappa^2)
, 
\end{split}
%\label{260524b}
\end{equation*}
which depends on $s\ge s_0$. 
Then by the expression \eqref{26052315} we have, similarly to \eqref{26052317i},  
\begin{equation}
\begin{split}
\mathop{\mathrm{Im}}\bigl(A\Theta(H-\kappa^2)\bigr)
&\ge 
%sA\Theta A
%+\tfrac12f^{-1}\Theta L
%-2\lambda\mu\mathop{\mathrm{Re}}(\Theta A)
%-s^3\Theta
%\\&\phantom{{}={}}
%+\mathop{\mathrm{Re}}(r_1\Theta A)
%-CQ
%\\&=
\tfrac12Af^{-1}\Theta A
+\tfrac12f^{-1}\Theta L
+A\big(s-\tfrac12f^{-1}\bigr)\Theta A
\\&\phantom{{}={}}
-\mathop{\mathrm{Re}}\bigl((2\lambda\mu-r_1)\Theta A\bigr)
-s^3\Theta
-CQ
.
\end{split}
\label{26052317b}
\end{equation}
The first and the second terms on the right-hand side 
of \eqref{26052317b} can be combined by using \eqref{26052222}, similarly to \eqref{26052422i}, as 
\begin{equation}
\begin{split}
\tfrac12Af^{-1}\Theta A
+\tfrac12f^{-1}\Theta L
&%=
%\tfrac14(f^{-1}\Theta)''
%+\tfrac12\mathop{\mathrm{Re}}\bigl(f^{-1}\Theta (A^2+L)\bigr)
%\\&\ge 
%s^2f^{-1}\Theta
%+\bigl(\lambda^2-\mu^2-q_1-q_2\bigr)f^{-1}\Theta 
%\\&\phantom{{}={}}
%+\mathop{\mathrm{Re}}\bigl(f^{-1}\Theta (H-\kappa^2)\bigr)
%-CQ
%\\&
\ge 
s^2f^{-1}\Theta
+(\lambda^2-\mu^2)f^{-1}\Theta 
-CQ
.
\end{split}
\label{26052422b}
\end{equation}
The third to fifth terms of \eqref{26052317b} are rewritten similarly to \eqref{26060717i}. 
In fact, if we set 
\[
B=A-\tfrac12(4\lambda\mu-r_1)s^{-1}-\mathrm is,
\]
then after some computations 
\begin{equation}
\begin{split}
&
A\big(s-\tfrac12f^{-1}\bigr)\Theta A
-2\lambda\mu\mathop{\mathrm{Re}}(\Theta A)
-s^3\Theta
+\mathop{\mathrm{Re}}(r_1\Theta A)
%\\&=
%\bigl(A-(2\lambda\mu-\tfrac12r_1) s^{-1}+\mathrm is\bigr)\big(s-\tfrac12f^{-1}\bigr)\Theta \bigl(A-(2\lambda\mu-\tfrac12r_1) s^{-1}-\mathrm is\bigr)
%\\&\phantom{{}={}}
%+2\lambda\mu \mathop{\mathrm{Re}}(\Theta A)
%-2\lambda\mu s^{-1}\mathop{\mathrm{Re}}(f^{-1}\Theta A)
%+\tfrac12 s^{-1}\mathop{\mathrm{Re}}(f^{-1}r_1\Theta A)
%\\&\phantom{{}={}}
%+s\bigl((s-\tfrac12f^{-1})\Theta \bigr)'
%-\bigl((2\lambda\mu-\tfrac12r_1)^2 s^{-2}+s^2\bigr)\big(s-\tfrac12f^{-1}\bigr)\Theta 
%-s^3\Theta
\\&
\ge 
B^*\big(s-\tfrac12f^{-1}\bigr)\Theta B
+\lambda\mu s^{-1}\mathop{\mathrm{Re}}(\Theta' A)
-\lambda\mu s^{-2}f^{-1/2}\mathop{\mathrm{Re}}(\Theta' A)f^{-1/2}
\\&\phantom{{}={}}
-4\lambda^2\mu^2 s^{-1}\Theta 
+2r_1\lambda\mu s^{-1}\Theta 
+2\lambda^2\mu^2s^{-2}f^{-1}\Theta 
-\tfrac12s^2f^{-1}\Theta 
-CQ
.
\end{split}
\label{26060717b}
\end{equation}
We can remove $\mathop{\mathrm{Re}}(\Theta' A)$ in the second and the third terms of 
\eqref{26060717b} by using \eqref{26052223}. At last, similarly to \eqref{26052423l}, 
we obtain
\begin{equation}
\begin{split}
&
A\big(s-\tfrac12f^{-1}\bigr)\Theta A
-2\lambda\mu\mathop{\mathrm{Re}}(\Theta A)
-s^3\Theta
+\mathop{\mathrm{Re}}(r_1\Theta A)
\\&
\ge 
B^*\big(s-\tfrac12f^{-1}\bigr)\Theta B
-\tfrac12s^2f^{-1}\Theta 
-2\lambda^2\mu^2s^{-2}f^{-1}\Theta 
-CQ
,
\end{split}
\label{26052423b}
\end{equation}
and by \eqref{26052317b}, \eqref{26052422b} and \eqref{26052423b} it follows that 
\begin{equation*}
\begin{split}
\mathop{\mathrm{Im}}\bigl(A\Theta(H-\kappa^2)\bigr)
&\ge 
B^*\big(s-\tfrac12f^{-1}\bigr)\Theta B
+\tfrac12s^2f^{-1}\bigl(1+2\lambda^2s^{-2}\bigr)\bigl(1-2\mu^2s^{-2}\bigr)\Theta 
\\&\phantom{{}={}}
-CQ
.
\end{split}
%\label{26052420b}
\end{equation*}
Lastly we estimate a contribution from $Q$ as in \eqref{26060922e}, 
and thus obtain 
\begin{equation*}
\begin{split}
\mathop{\mathrm{Im}}\bigl(A\Theta(H-\kappa^2)\bigr)
&\ge 
B^*\big(s-\tfrac12f^{-1}\bigr)\Theta B
+\tfrac12s^2f^{-1}\bigl(1+2\lambda^2s^{-2}\bigr)\bigl(1-2\mu^2s^{-2}\bigr)\Theta 
\\&\phantom{{}={}}
-Cs^2f^{-1-\rho}\Theta
-Cs^2\bigl(\chi_{m-1,m+1}^2+\chi_{n-1,n+1}^2\bigr)f^{-1}\mathrm e^{2s\theta}
\\&\phantom{{}={}}
-C(H-\kappa^2)^*\chi_{m-1,n+1}f^{1+\rho}\mathrm e^{2s\theta}(H-\kappa^2)
.
\end{split}
\end{equation*}
Now by letting $m\in\mathbb N$ be large enough we obtain the desired estimate. 
\end{proof}

\begin{proof}[Proof of the assertion 2 of Proposition~\ref{20260535017}.]
We discuss slightly differently from so far. 
Let $\phi\in\mathcal E_{-\infty}$ be a solution to \eqref{200125} 
with $\kappa=\lambda+\mathrm i\mu\in \mathbb C_+$. 
Fix any $s_0>\sqrt2\mu$, and take $m\in\mathbb  N$ as in Lemma~\ref{20260535018}. 
Then by taking an expectation value of the inequality from Lemma~\ref{20260535018}
on the state $\chi_{m-2,n+2}\phi$
we have for any $n>m$ and $s\ge s_0$
\begin{equation*}
\bigl\|\chi_{m,n}f^{-1/2}\mathrm e^{sf}\phi\bigr\|_{L^2}^2
\le 
C\bigl\|\chi_{m-1,m+1}f^{-1/2}\mathrm e^{sf}\phi\bigr\|_{L^2}^2
+C\bigl\|\chi_{n-1,n+1}f^{-1/2}\mathrm e^{s f}\phi\bigr\|_{L^2}^2
.
%\label{2606031731i}
\end{equation*}
If we let $n\to\infty$ and multiply the both sides by $\mathrm e^{-s2^{m+2}}$, it follows that 
\begin{equation}
\bigl\|\bar\chi_{m}f^{-1/2}\mathrm e^{s(f-2^{m+2})}\phi\bigr\|_{L^2}^2
\le 
C\bigl\|\chi_{m-1,m+1}f^{-1/2}\mathrm e^{s(f-2^{m+2})}\phi\bigr\|_{L^2}^2
.
\label{2606031731b}
\end{equation}
Now assume $\|\bar\chi_{m+2}\phi\|_{L^2}>0$. Then, as $s\to\infty$, 
the left-hand side of \eqref{2606031731b} grows exponentially while the right-hand side remains bounded. 
This is a contradiction, and it follows that $\bar\chi_{m+2}\phi=0$ on $\mathbb R^d$. 
Then by the unique continuation property we obtain 
$\phi=0$ on $\mathbb R^d$. Thus we are done. 
\end{proof}

\subsection{Rellich's theorem for zero eigenvalue}\label{2606072}

Finally we prove Proposition~\ref{20260535017b}. 
We can discuss both of the assertions 1 and 2 by using the following single lemma. 
Take a weight function of the form 
\begin{equation*}
\Theta=\chi_{m,n}\mathrm e^{2\widetilde\theta}
;\quad 
\widetilde\theta=t f^{1-\rho}+s\theta^{1-\rho}
,
%\label{2507191448m}
\end{equation*}
with parameters $m,n,\nu\in\mathbb N$, $t\ge 1$ and  $s\in [0,1]$ satisfying $n>m$,
where 
$\chi_{m,n}$ and $\theta$ are from \eqref{eq:11.7.11.5.14} and Definition~\ref{25071914480}, respectively. 

\begin{lemma}\label{26060317m}
Suppose Assumption~\ref{cond:20022317} with (d), let $\kappa=0$ and $s\in[0,1]$,
and let $t_0\ge 1$ be sufficiently large.
Then there exist $c,C$ and $m\in\mathbb N$ such that 
uniformly in $n>m$, $\nu\in\mathbb N$ and $t\ge t_0$
\[
\begin{split}
\mathop{\mathrm{Im}}\bigl(A\Theta(H-\kappa^2)\bigr)
&\ge ct^2f^{-1-2\rho}\Theta
-Ct^2\bigl(\chi_{m-1,m+1}^2+\chi_{n-1,n+1}^2\bigr)f^{-1+\rho}\mathrm e^{2\widetilde\theta}
\\&\phantom{{}={}}
-C(H-\kappa^2)^*\chi_{m-1,n+1}f^{1+2\rho}\mathrm e^{2\widetilde\theta}(H-\kappa^2)
.
\end{split}
\]  
\end{lemma}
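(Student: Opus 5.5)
We follow the scheme of Lemma~\ref{26060317i} and Lemma~\ref{26060317}, based on the representation \eqref{26052315} with $\kappa=0$. In this case $\lambda=\mu=0$, so every term involving $\mathop{\mathrm{Re}}(\Theta' A)$ and the auxiliary identity \eqref{26052223} disappears, except for the contribution of $r_1$.

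\textbf{Admissible error terms.} Here $\widetilde\theta'=(1-\rho)tf^{-\rho}+s(1-\rho)\theta^{-\rho}\theta'$, so $\widetilde\theta'\sim tf^{-\rho}$ uniformly in $s\in[0,1]$ and $\nu$, by Lemma~\ref{2605242351}. We collect the admissible error terms as
\[
Q=f^{-1-2\rho}\Theta+t^2f^{-2-\rho}\Theta+t^2\bigl(f^{\rho}|\chi_{m,n}'|+|\chi_{m,n}''|+|\chi_{m,n}'''|\bigr)\mathrm e^{2\widetilde\theta}+p_j\bigl(f^{-1-2\rho}\Theta+f^\rho|\chi'_{m,n}|\mathrm e^{2\widetilde\theta}\bigr)p_j+(H-\kappa^2)^*\bigl(f^{1+2\rho}\Theta+|\chi_{m,n}'|\mathrm e^{2\widetilde\theta}\bigr)(H-\kappa^2).
\]
Note that $\widetilde\theta''=\mathcal O(tf^{-1-\rho})$ and $\widetilde\theta'''=\mathcal O(tf^{-2-\rho})$.

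\textbf{Positive leading terms.} From \eqref{26052315}, split as in \eqref{26052317}, we get the lower bound
\[
\tfrac12Af^{-1}\Theta A+\tfrac12f^{-1}\Theta L+A(\widetilde\theta'-\tfrac12f^{-1})\Theta A+\mathop{\mathrm{Re}}(r_1\Theta A)-\widetilde\theta'^3\Theta-\tfrac32\widetilde\theta'\widetilde\theta''\Theta-CQ.
\]
The term $-\tfrac12(\partial_fq_1)\Theta\ge -Cf^{-1-2\rho}\Theta$ and the $q_2$ terms are absorbed into $Q$ by (d). The first two terms are combined through \eqref{26052222} with $\Theta$ replaced by $f^{-1}\Theta$, as in \eqref{26052422}. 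Since $\kappa=0$, this gives $\ge f^{-1}\widetilde\theta'^2\Theta-Cf^{-1}q_1\Theta-CQ$, and $q_1f^{-1}\le Cf^{-1-2\rho}$ is again admissible.

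Next we complete the square with $B=A+\tfrac12r_1\widetilde\theta'^{-1}-\mathrm i\widetilde\theta'$, exactly as in \eqref{26052423}. This produces $B^*(\widetilde\theta'-\tfrac12f^{-1})\Theta B\ge0$, together with the scalar terms $-\tfrac12f^{-1}\widetilde\theta'^2\Theta+\tfrac12\widetilde\theta'\widetilde\theta''\Theta-\tfrac14r_1^2\widetilde\theta'^{-1}\Theta$ and the cross term $\tfrac12\mathop{\mathrm{Re}}(r_1f^{-1}\widetilde\theta'^{-1}\Theta A)$. We treat the cross term as in \eqref{26061223} via \eqref{26052223} with $\kappa=0$. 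The resulting error is $\mathop{\mathrm{Re}}((\partial_j r_1f^{-1}\widetilde\theta'^{-2})\Theta p_j)$, and it is controlled by $|\partial r_1|\le Cf^{-1-3\rho}$ and $\widetilde\theta'^{-2}\le Ct^{-2}f^{2\rho}$ together with Cauchy--Schwarz.

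\textbf{The key balance.} After these steps the net scalar coefficient is
\[
\tfrac12f^{-1}\widetilde\theta'^2+\tfrac12\widetilde\theta'\widetilde\theta''-\tfrac14r_1^2\widetilde\theta'^{-1}.
\]
The first two terms give
\[
\tfrac12(1-\rho)^2t^2f^{-1-2\rho}\bigl(1-\rho/(1-\rho)\cdot(1-\rho)\bigr)+(\text{$s$-terms})=\tfrac12(1-\rho)^3t^2f^{-1-2\rho}+\ldots.
\]
The $s$-terms are nonnegative up to admissible error, by $\theta''\le0$ and the estimate $-\theta''\le(1+\rho)f^{-1}\theta'$ of Lemma~\ref{2605242351}, combined with convexity-type bookkeeping for $\theta^{1-\rho}$. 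The $r_1$ term satisfies $r_1^2\widetilde\theta'^{-1}\le Ct^{-1}f^{-1-3\rho+\rho}=Ct^{-1}f^{-1-2\rho}$, so it is dominated once $t\ge t_0$ is large. This exactly explains the exponents $(1+3\rho)/2$ in (d).

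\textbf{Main obstacle.} The hardest step is checking that the $s\theta^{1-\rho}$ contribution to $\widetilde\theta'\widetilde\theta''$ does not spoil positivity uniformly in $\nu$. I expect to handle it by writing $(\theta^{1-\rho})''=(1-\rho)\theta^{-\rho}\theta''-\rho(1-\rho)\theta^{-1-\rho}\theta'^2$ and using $\theta\le f$ together with $s\le1\le t$ to absorb it into $ct^2f^{-1-2\rho}$. If this fails, I would restrict $s$ to be small.

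\textbf{Conclusion.} Finally, $Q$ is bounded as in \eqref{26060922e}, yielding $(ct^2-C-Ct^2f^{-1+\rho})f^{-1-2\rho}\Theta$ plus the stated boundary terms. Choosing $t_0$ large and then $m$ large gives the assertion.
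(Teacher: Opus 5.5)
Your outline is the paper's own proof. You split \eqref{26052315} as in \eqref{26052317}, convert $\tfrac12Af^{-1}\Theta A+\tfrac12f^{-1}\Theta L$ into $f^{-1}\widetilde\theta'^2\Theta$ via \eqref{26052222}, complete the square with the $r_1$-shifted $B$, and dispose of $\tfrac12\mathop{\mathrm{Re}}(r_1f^{-1}\widetilde\theta'^{-1}\Theta A)$ as in \eqref{26061223}. You then conclude from $\tfrac12f^{-1}\widetilde\theta'^2+\tfrac12\widetilde\theta'\widetilde\theta''\approx\tfrac12(1-\rho)^3t^2f^{-1-2\rho}$ together with $r_1^2\widetilde\theta'^{-1}\le Ct^{-1}f^{-1-2\rho}$. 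The step you flagged, however, is justified incorrectly as written.

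The $s$-terms are not nonnegative up to admissible error. The $s$-part of $f^{-1}\widetilde\theta'+\widetilde\theta''$ is $s(1-\rho)\theta^{-\rho}\bigl(f^{-1}\theta'-\rho\theta^{-1}\theta'^2+\theta''\bigr)$. For $f>2^\nu/\rho$ both $f^{-1}\theta'+\theta''$ and $-\rho\theta^{-1}\theta'^2$ are negative, and where $f$ is a fixed multiple of $2^\nu$ this expression has size comparable to $sf^{-1-\rho}$. After multiplying by $\tfrac12\widetilde\theta'$ you get a negative term of order $s\,tf^{-1-2\rho}\Theta$, which your $Q$ does not cover uniformly in $t$.

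This term has to be absorbed into the leading $t^2$-term by taking $t_0$ large; that is what the $-s$ beside $t$ in the paper's final coefficient reflects. For this you need $\theta^{-\rho}\theta'\le f^{-\rho}$ and $\theta^{-1}\theta'\le f^{-1}$ uniformly in $\nu$. These do not follow from $\theta\le f$, which points the wrong way. They follow instead from
\[
\theta=\int_0^f(1+\tau/2^\nu)^{-1-\rho}\,\mathrm d\tau\ge f(1+f/2^\nu)^{-1-\rho}=f\theta',
\]
which holds because $\theta'$ is decreasing, together with $\theta'\le1$. The same bound is what justifies your claim $\widetilde\theta'\le Ctf^{-\rho}$ uniformly in $\nu$; that claim is not contained in Lemma~\ref{2605242351}. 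Combining it with $-\theta''\le(1+\rho)f^{-1}\theta'$ gives
\[
\tfrac12\widetilde\theta'\bigl(f^{-1}\widetilde\theta'+\widetilde\theta''\bigr)
\ge(1-\rho)^2t\bigl(\tfrac12(1-\rho)t-2\rho s\bigr)f^{-1-2\rho}.
\]
So every $s\in[0,1]$ is admissible once $t_0$ is large, and you do not need to make $s$ small.

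A minor bookkeeping point: your $Q$ should contain $t^2f^{-1-3\rho}\Theta$, as the paper's does. This covers $\tfrac12q_2\Theta'=\mathcal O(tf^{-1-3\rho})\Theta$ and the Cauchy--Schwarz bound for $\tfrac12\mathop{\mathrm{Re}}(\Theta'(H-\kappa^2))$. It is harmless, since $t^2f^{-1-3\rho}\le 2^{-m\rho}t^2f^{-1-2\rho}$ on the support of $\Theta$.
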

\begin{proof}
Since outline of the proof is the same as that of Lemma~\ref{260612}, 
we do not explain the details, see the corresponding parts there. 
Nonetheless, we have to take care of difference of the settings, 
and in particular the dependence of the parameters. 
Hence let us record each of the key inequalities.

Let $\kappa=0$. 
For the moment the below estimates are uniform in $n>m\ge 1$, $\nu\in\mathbb N$,
$t\ge 1$ and $s\in (0,1]$, 
and constants $C>0$ are independent of them. 
The admissible error terms are of the form
\begin{equation*}
\begin{split}
Q&=
f^{-1-2\rho}\Theta
+t^2f^{-1-3\rho}\Theta
+t^2\bigl(f^\rho|\chi_{m,n}'|+|\chi_{m,n}''|+|\chi_{m,n}'''|\bigr)\mathrm e^{2\widetilde\theta}
\\&\phantom{{}={}}
+p_j\bigl(f^{-1-2\rho}\Theta+f^{\rho}|\chi_{m,n}'|\mathrm e^{2\widetilde\theta} \bigr)p_j
+(H-\kappa^2)^*\bigl(f^{1+2\rho}\Theta+|\chi_{m,n}'|\mathrm e^{2\widetilde\theta} \bigr)(H-\kappa^2)
.
\end{split}
%\label{260524m}
\end{equation*}
First by \eqref{26052315} we have%, similarly to \eqref{26052317l}, 
\begin{equation}
\begin{split}
\mathop{\mathrm{Im}}\bigl(A\Theta(H-\kappa^2)\bigr)
&\ge 
\tfrac12Af^{-1}\Theta A
+\tfrac12f^{-1}\Theta L
+A\bigl(\widetilde\theta'-\tfrac12f^{-1}\bigr)\Theta A
\\&\phantom{{}={}}
+\mathop{\mathrm{Re}}(r_1\Theta A)
-\widetilde\theta'^3\Theta 
-\tfrac32\widetilde\theta'\widetilde\theta''\Theta 
-CQ
.
\end{split}
\label{26052317m}
\end{equation}
%Similarly to \eqref{26052422l}, 
The first and the second terms of \eqref{26052317m} are combined by using \eqref{26052222} as 
\begin{equation}
\begin{split}
\tfrac12Af^{-1}\Theta A
+\tfrac12f^{-1}\Theta L
%&=
%\tfrac14(f^{-1}\Theta)''
%+\bigl(\lambda^2-\mu^2-q_1-q_2\bigr)f^{-1}\Theta 
%\\&\phantom{{}={}}
%+\mathop{\mathrm{Re}}\bigl(f^{-1}\Theta (H-\kappa^2)\bigr)
%\\
&\ge 
f^{-1}\widetilde\theta'^2\Theta
-CQ
.
\end{split}
\label{26052422m}
\end{equation}
Here we follow computations similar to \eqref{26052423}
to rewrite and estimate the third to the sixth terms of \eqref{26052317m} as 
\begin{equation}
\begin{split}
&A\bigl(\widetilde\theta'-\tfrac12f^{-1}\bigr)\Theta A
+\mathop{\mathrm{Re}}(r_1\Theta A)
-\widetilde\theta'^3\Theta 
-\tfrac32\widetilde\theta'\widetilde\theta''\Theta 
\\&
\ge 
\bigl(A+\mathrm i\widetilde\theta'\bigr)\bigl(\widetilde\theta'-\tfrac12f^{-1}\bigr)\Theta \bigl(A-\mathrm i\widetilde\theta'\bigr)
+\tfrac12\mathop{\mathrm{Re}}\bigl(r_1f^{-1}\widetilde\theta'^{-1}\Theta A\bigr)
\\&\phantom{{}={}}
-\tfrac12f^{-1}\widetilde\theta'^2\Theta 
+\tfrac12\widetilde\theta'\widetilde\theta''\Theta 
-CQ
. 
\end{split}
\label{26060717m}
\end{equation}
As for the second term of \eqref{26060717m}, we repeat the same computations 
as in \eqref{26061223} to obtain 
\begin{equation}
\begin{split}
\tfrac12\mathop{\mathrm{Re}}\bigl(r_1f^{-1}\widetilde\theta'^{-1}\Theta A\bigr)
\ge 
-CQ.
\end{split}
\label{26061223m}
\end{equation}
Thus \eqref{26052317m}, \eqref{26052422m}, \eqref{26060717m} and \eqref{26061223m} imply 
\begin{equation*}
\begin{split}
\mathop{\mathrm{Im}}\bigl(A\Theta(H-\kappa^2)\bigr)
&\ge 
\bigl(A+\mathrm i\widetilde\theta'\bigr)\bigl(\widetilde\theta'-\tfrac12f^{-1}\bigr)\Theta \bigl(A-\mathrm i\widetilde\theta'\bigr)
\\&\phantom{{}={}}
+\tfrac12f^{-1}\widetilde\theta'^2\Theta 
+\tfrac12\widetilde\theta'\widetilde\theta''\Theta 
-CQ
.
\end{split}
%\label{26052420m}
\end{equation*}
Now we take $Q$ into account, and obtain 
\begin{equation*}
\begin{split}
\mathop{\mathrm{Im}}\bigl(A\Theta(H-\kappa^2)\bigr)
&\ge 
\bigl(A+\mathrm i\widetilde\theta'\bigr)\bigl(\widetilde\theta'-\tfrac12f^{-1}\bigr)\Theta \bigl(A-\mathrm i\widetilde\theta'\bigr)
\\&\phantom{{}={}}
+\bigl(\tfrac12\widetilde\theta'((1-2\rho)^2t-s)-Cf^{-\rho}-Ct^2f^{-2\rho}\bigr)f^{-1-\rho}\Theta 
\\&\phantom{{}={}}
-Ct^2\bigl(\chi_{m-1,m+1}^2+\chi_{n-1,n+1}^2\bigr)f^{-1+\rho}\mathrm e^{2\widetilde\theta}
\\&\phantom{{}={}}
-C(H-\kappa^2)^*\chi_{m-1,n+1}f^{1+2\rho}\mathrm e^{2\widetilde\theta}(H-\kappa^2)
.
\end{split}
%\label{26060813m}
\end{equation*}
Thus, let $m\in\mathbb N$ and $t_0\ge 1$ be large enough,
and restrict $t\ge t_0$, 
and we obtain the assertion. 
\end{proof}

\begin{proof}[Proof of Proposition~\ref{20260535017b}]
\textit{1}.
Let $\kappa=0$ and $s=1$, and take $t_0\ge 1$ and $m\in\mathbb N$ as 
in Lemma~\ref{26060317m}. Let $\phi\in \mathcal E_{0,-t_0}$ solve \eqref{200125}, 
and set 
\[
t_1=\sup\bigl\{t\in\mathbb R;\ \phi\in \mathcal E_{0,-t} \bigr\}\ge t_0
.
\]
Assume $t_1\neq \infty$, and we shall deduce a contradiction. 
Fix $t\in [t_0,t_1)$ such that $t+1>t_1$. 
Then, similarly to the proof of the assertion 1 of Proposition~\ref{26060301a}, 
we can deduce from the inequality of Lemma~\ref{26060317m} that 
\begin{equation*}
\bigl\|\bar\chi_{m}f^{-1/2-\rho}\mathrm e^{(t+1)f^{1-\rho}}\phi\bigr\|_{L^2}^2
\le 
C\bigl\|\chi_{m-1,m+1}\phi\bigr\|_{L^2}^2
.
\end{equation*}
This implies $\phi\in \mathcal E_{0,-r}$ for any $r<t+1$, and hence a contradiction. 
Thus we are done. 

\smallskip
\noindent
\textit{2}. 
Next we let $s=0$ in Lemma~\ref{26060317m}, and then
we can discuss similarly to the proof of the assertion 2 of Proposition~\ref{26060301a}.
Since the arguments are completely parallel, we may omit the details. 
Then we are done. 
\end{proof}

\subsubsection*{Acknowledgements} 
KI was partially supported by JSPS KAKENHI Grant Number JP23K03163.

\providecommand{\bysame}{\leavevmode\hbox to3em{\hrulefill}\thinspace}
\providecommand{\MR}{\relax\ifhmode\unskip\space\fi MR }
% \MRhref is called by the amsart/book/proc definition of \MR.
\providecommand{\MRhref}[2]{%
  \href{http://www.ams.org/mathscinet-getitem?mr=#1}{#2}
}
\providecommand{\href}[2]{#2}

\end{document}